\documentclass[10pt,reqno]{amsart}

\usepackage{amsthm,amsmath,amstext,amssymb,amscd,euscript, mathrsfs, dsfont,multicol,times,enumerate,subfig,sidecap}
\usepackage{enumerate}
\usepackage{amsmath, amssymb, amsthm}
\usepackage{mathrsfs}
\usepackage{esint}
\usepackage{xcolor}
\usepackage{mathtools}
\usepackage{dirtytalk} %used for quotation marks using \say{}
\usepackage{verbatim}
\usepackage{todonotes}
\usepackage{xcolor}
\usepackage{bm}
\usepackage{bbm}

\usepackage[colorlinks=true, pdfstartview=FitV, linkcolor=blue, citecolor=red, urlcolor=blue]{hyperref} %%% % Includes hyperrefs into pdf

\newtheorem{thm}{Theorem}[section]

\newtheorem{lem}[thm]{Lemma}

\theoremstyle{definition}
\newtheorem{defn}[thm]{Definition}

\newtheorem{assumption}[thm]{Assumption}
\newtheorem{rem}[thm]{Remark}

\newcommand\bE{\mathbb{E}}

\newcommand\bN{\mathbb{N}}

\newcommand\bP{\mathbb{P}}

\newcommand\bR{\mathbb{R}}

\newcommand\bT{\mathbb{T}}

\newcommand\cpar{\text{$[$\kern-.38em$[$}}
\newcommand\cbrk{\text{$]$\kern-.15em$]$}}
\newcommand\opar{\text{\,\raise.2ex\hbox{${\scriptstyle
				|}$}\kern-.34em$($}}
\newcommand\obrk{\text{$)$\kern-.34em\raise.2ex\hbox{${\scriptstyle |}$}}\,}
\renewcommand{\bar}{\overline}

\renewcommand{\d}{\mathrm{d}}

\renewcommand{\epsilon}{\varepsilon}

\def\XXint#1#2#3{{\setbox0=\hbox{$#1{#2#3}{\int}$ }
		\vcenter{\hbox{$#2#3$ }}\kern-.58\wd0}}

\makeatletter
\def\@tocline#1#2#3#4#5#6#7{\relax
	\ifnum #1>\c@tocdepth % then omit
	\else
	\par \addpenalty\@secpenalty\addvspace{#2}%
	\begingroup \hyphenpenalty\@M
	\@ifempty{#4}{%
		\@tempdima\csname r@tocindent\number#1\endcsname\relax
	}{%
		\@tempdima#4\relax
	}%
	\parindent\z@ \leftskip#3\relax \advance\leftskip\@tempdima\relax
	\rightskip\@pnumwidth plus4em \parfillskip-\@pnumwidth
	#5\leavevmode\hskip-\@tempdima
	\ifcase #1
	\or\or \hskip 1em \or \hskip 2em \else \hskip 3em \fi%
	#6\nobreak\relax
	\dotfill\hbox to\@pnumwidth{\@tocpagenum{#7}}\par
	\nobreak
	\endgroup
	\fi}
\makeatother

\begin{document}
	\title{The stochastic Keller--Segel system in critical spaces}
	
	\author[Jae-Hwan Choi]{Jae-Hwan Choi}
	\address[J.-H. Choi]{School of Mathematics, Korea Institute for Advanced Study, 85 Hoegi-ro, Dongdaemun-gu, Seoul, 02455, Republic of Korea}
	\email{jhchoi@kias.re.kr}
	
	\author[Ankit Kumar]{Ankit Kumar}
	\address[A. Kumar]{Montanuniversit\"{a}t Leoben,
		Department Mathematik und Informationstechnologie,
		Franz Josef Stra{\ss}e 18,
		8700 Leoben,
		Austria}
	\email{ankit.kumar@unileoben.ac.at}
	
	\author[Andrea Pitrone]{Andrea Pitrone}
	\address[A. Pitrone]{Mathematical Institute, University of Oxford, OX26GG Oxford, United Kingdom}
	\email{andrea.pitrone@maths.ox.ac.uk}
	
	\author[Shyam Popat]{Shyam Popat}
	\address[S. Popat]{Centre de Math\'ematiques Appliqu\'ees (CMAP), \'Ecole Polytechnique, 91120 Palaiseau, France}
	\email{shyam.popat@polytechnique.edu}
	
	\author[Max Sauerbrey]{Max Sauerbrey}
	\address[M. Sauerbrey]{Max--Planck--Institut f\"ur Mathematik in den Naturwissenschaften,
		04103 Leipzig, Germany}
	\email{maxsauerbrey97@gmail.com}

	\thanks{}
	
	\subjclass[2020]{Primary: 60H15, 35R60; Secondary: 35A01, 92C17, 35Q92}
	
	\keywords{Keller--Segel equation, environmental noise,  well-posedness, stochastic maximal regularity.}

	\maketitle
	
	\begin{abstract}
		
		We study stochastic, parabolic-parabolic Keller--Segel equations on the $d$-dimensional torus {in} scaling critical Besov spaces, for $d \geq 3$. Using stochastic maximal regularity estimates, we prove local well-posedness of the equation, i.e., that there exists a unique solution up to a maximal time of existence.
		We show that the time of existence can be made arbitrarily large with arbitrarily high probability, provided that the initial data is sufficiently small in these critical spaces.
	\end{abstract}
	\vspace{10pt}

	\section{Introduction}
	\subsection{Outline}
	In this paper, we study the well-posedness of the stochastic Keller--Segel system
	\begin{equation}\label{25.06.19.16.01}
		\left\{
		\begin{aligned}
			\mathrm{d}u &= \big(\Delta u\,-\,\mathrm{div}(u\nabla v)\big)\mathrm{d}t\,+\,\sum_{n\geq 1}g_n^{1}(u,v ,\nabla v)\,\mathrm{d}W^1_n,\\
			\mathrm{d}v&=(\Delta v\,+\,u\,-\,v)\,\mathrm{d}t\,+\,\sum_{n\geq 1}g^2_n(v)\,\mathrm{d}W_n^2,\\
		\end{aligned}
		\right.
	\end{equation}
	on the $d$-dimensional torus for dimensions $d\geq 3$, supplemented with a suitable initial condition 
	\begin{equation}
		\label{25.06.19.16.02}
		(u(0),v(0)) \,=\, (u_0,v_0).
	\end{equation}
	In the above equation,  $(W^1_n)_{n\geq1}$, $(W^2_n)_{n\geq1}$ denote families of standard independent Brownian motions, and $(g^1_n)_{n\geq 1}, (g^2_n)_{n\geq 1}$ denote families of sufficiently regular nonlinearities. {Using the methods from \cite{AV_SQEE_pt1,AV_SQEE_pt2}} based on stochastic maximal $L^p_\kappa L^q$-regularity estimates, we show 
	in scaling-critical spaces that:
	
	\begin{enumerate}[(I)]
		\item The stochastic Keller--Segel  system \eqref{25.06.19.16.01} admits a maximal unique solution (Theorem \ref{thm:local_wp}).
		\item Small data implies that the time of existence is large, with high probability (Theorem \ref{thm_global_sol}).
	\end{enumerate}
	
	\subsection{Deterministic Keller--Segel equations}
	Chemotaxis is the directed movement of organisms triggered by a chemical concentration gradient and exhibited by many types of motile cells. Amoeboid cells, for instance, migrate toward food sources, and immune cells such as macrophages and neutrophils move toward invading pathogens. Other cells involved in immune responses and wound healing are drawn to inflamed tissues by chemical cues.
	
	The theoretical and mathematical study of chemotaxis goes back to Patlak’s work \cite{CP53} in the 1950s and to Keller and Segel \cite{KS70} in the 1970s. A basic, parabolic-parabolic version of the model is 
	\begin{equation}\label{eqn1}
		\left\{\begin{aligned}
			\d u &=\big[r_u \Delta u-\chi \mathrm{div} \big(u\nabla v\big)\big]\d t, 
			\\
			\d v&=\big[ r_v \Delta v+\beta u-\alpha v\big]\d t,
		\end{aligned}
		\right.
	\end{equation}
	in which  $u$ represents the cell density and $v$  the concentration of the chemical signal. The positive parameters $r_u$ and $r_v$ are the diffusivities of the cells and the chemoattractant, respectively. The positive constant $\chi$ measures chemotactic sensitivity, and in the equation for the signal, $\alpha \ge 0$ is the damping rate and $\beta\ge 0$ is the rate at which the cells produce the chemoattractant. We remark that while \eqref{eqn1} is the focus of the current manuscript, in the literature, also parabolic-elliptic versions are considered, in which the equation for $v$ is of elliptic type. For further discussion of the biological interpretation, see the surveys by Horstmann \cite{DH03,DH04}, Hillen and Painter \cite{HP09}, and Bellomo et al. \cite{BB15}, as well as works by Biler \cite{PB18} and Perthame \cite{BP04}.
	
	The mathematical literature on  Keller--Segel equations and their variants is vast, and we refer to the survey by Arumugam and Tyagi \cite{Arumugam_Tyagi} for a more complete review. To give a flavour of what kind of results are typically obtained, we discuss some selected works and also refer to the references therein.
	Whether solutions to \eqref{eqn1} exist globally is mainly determined by the Lyapunov functional
	\[
	\int \left(u \log (u) - uv +\frac{1}{2} |\nabla v|^2 +\frac{1}{2}v^2\right)\, \d x,
	\]
	taking all parameters in \eqref{eqn1} equal to one. 
	While in one spatial dimension the above is always lower bounded by Sobolev embeddings, in $d=2$  lower boundedness  requires smallness of the initial mass of  $u$. This may be shown, e.g., by the Moser--Trudinger inequality, and allows one to prove the global existence of solutions, as was done by Nagai, Senba and Yoshida in \cite{Nagai}. In the same two-dimensional setting, if the mass is large, the existence of blow-up solutions was shown by Herrero and Velázquez  \cite{herrero}. In higher dimensions, smallness in other spaces is required to ensure the global existence of solutions:  {Based on $L^pL^q$-estimates for the linearized equation, this was shown by Kozono and Sugiyama \cite{KOZONO20091}}  in scaling-critical spaces for $d\ge 3$.

	It is also natural to study Keller--Segel equations coupled to an advecting fluid velocity. In three  spatial dimensions, the existence and long-time behavior of the Keller--Segel--Navier--Stokes equation  was studied by Winkler \cite{Winkler2019Three}.
	Lorz  proved the existence of global in time solutions with small initial data in two and three dimensions for a coupled Keller--Segel--Stokes model \cite{Lorz2012Coupled}. 
	{He also provided numerical evidence that} the singularity formation in the Keller--Segel equation can be overcome by transport along the Stokes equation. {Such a prevention of blow-up was rigorously verified by Bedrossian and He \cite{Bedrossian_He} for transportation along a shear flow and  {we also refer to the work of Carrillo, Hittmeir and  J\"ungel} \cite{carillo_jungel} for other blow-up preventing mechanisms.}

	\subsection{Stochastic Keller--Segel equations}
	The macroscopic system \eqref{eqn1} is obtained as {effective dynamics of an underlying microscopic model}, 
	where individual organisms move randomly,  see, e.g., the work by Stevens \cite{AS00} and Fournier and Toma\v{s}evi\'{c} \cite{fournier2023particle}. 
	{Consequently,} on mesoscopic scales, a more accurate model may be obtained by incorporating the fluctuations around the macroscopic profile, leading to an additional noise term of Dean--Kawasaki type as shown by Huang and Qiu \cite{HJ2021}. {Other sources of noise in the Keller--Segel equations include transportation along a generic, temporally rough vector field leading to transport noise, or the inclusion of the effects of a random environment. The latter, modeled by the nonlinear noise term in \eqref{25.06.19.16.01}, is the focus of the current manuscript. }

	{In contrast to deterministic Keller--Segel equations, the mathematical literature in the stochastic case is more limited.}
	In one dimension, the existence and uniqueness of solutions to stochastic Keller--Segel equations were studied by  Hausenblas, Mukherjee and Tran \cite{HMT22a, HMT22b}. Misiats, Stanzhytskyi and Topaloglu \cite{MST22} and 
	Mayorcas and Toma\v{s}evi\'{c} \cite{Mayorcas2023Blowup} analyzed the influence of  noise on the blow-up behavior of solutions to the Keller--Segel on $\mathbb{R}^2$. 
	The global in time existence of renormalized kinetic solutions on the two-dimensional torus with a   Dean--Kawasaki noise term was recently obtained by Ji, Sun and Wu. 
	Martini and Mayorcas \cite{Martini2025Additive} studied paracontrolled solutions to the Keller--Segel equation in two dimensions with a linearized but more singular noise term of Dean--Kawasaki type.
	Flandoli, Galeati and Luo \cite{FGL21} demonstrated that blow-up in the Keller--Segel model can be delayed by transport noise. 
	Shang, Tian and Wang \cite{SJW19} investigated the asymptotic behavior of the solutions of non-autonomous stochastic Keller-Segel in one spatial dimension. The well-posedness of stochastic Keller--Segel--Navier--Stokes  and Keller--Segel equations in two  dimensions  was obtained by Zhang and Liu \cite{ZL25} 
	and Chen, Zhai and Zhang \cite{CZZ25}, respectively. 
	We also refer to the works by Braukhoff, Huber and J\"ungel \cite{BHJ24}, Dhariwal et al. \cite{DHJN21}, and Dhariwal, J\"ungel and Zamponi \cite{DJZ19} and Huber \cite{FH22} concerned with the mathematical analysis of other stochastic cross-diffusion equations.

	\subsection{Stochastic maximal $L^p$-regularity and evolution equations}
	The use of maximal regularity in the study of stochastic evolution equations was initiated by Da Prato in the 1980s \cite{da1985maximal}. While there $L^2$-estimates were considered, Da Prato studied $L^p$-estimates in time in the joint work with Lunardi \cite{DPL98}, see also his book with Zabczyk \cite{da2014stochastic}. The latter results on maximal estimates in real interpolation spaces were refined in Krylov's mixed $L^pL^q$-theory \cite{K96,K99}. A functional analytic approach for operators with a bounded $H^\infty$-calculus to such estimates was then obtained by van Neerven, Veraar and Weis  in \cite{NVW12a}, who also presented applications to evolution equations in \cite{NVW12b}. Applications to quasilinear SPDEs were first investigated by Hornung \cite{hornung2019quasilinear}. The latter was further developed by Agresti and Veraar in \cite{AV_SQEE_pt1,AV_SQEE_pt2} by considering time-weighted stochastic maximal regularity estimates in the spirit of the deterministic theory by Pr\"uss, Simonett and Wilke \cite{pruss2018critical}. The latter allowed them to prove parabolic smoothing for SPDEs like the Navier--Stokes equation with transport noise posed in critical spaces \cite{Agresti2024StochasticNS}. Other applications include the global in time well-posedness of primitive equations and the construction of smooth solutions to quasilinear SPDEs, in collaborations with Hieber, Hussein and  Saal \cite{Agresti2025StochasticPrimative} and the fourth author of the current manuscript \cite{agresti2025stochastic}, respectively. Said theory found applications to the stability analysis of travelling waves in stochastic reaction-diffusion equations in the work by van den Bosch and Hupkes \cite{bosch2024multidimensional}. For further information and selected open problems, we refer to the recent survey \cite{AV25}.

	\subsection{Organization of the remaining manuscript}
	As pointed out at the beginning of this introduction, in this manuscript, we explore the consequences of the theory of Agresti and Veraar \cite{AV_SQEE_pt1,AV_SQEE_pt2} for the well-posedness of the stochastic Keller--Segel equation \eqref{25.06.19.16.01}.
	For this, we proceed as follows: In the next section, we collect notation. In the preceding Section \ref{sec2}, we motivate and state our main results and their assumptions. Section \ref{sec3} is dedicated to the proof of our first main result (Theorem \ref{thm:local_wp}) on the local well-posedness of \eqref{25.06.19.16.01}. In Section \ref{sec4}, we give the  proof of the our second main result (Theorem \ref{thm_global_sol}) concerning the life time of solutions to  \eqref{25.06.19.16.01}  for small initial data.

	\section{Notation}\setcounter{equation}{0}
	
	\noindent
	\textbf{Function spaces.} Throughout, the space $\ell^2$ stands for the square-summable sequences, indexed by the natural numbers. More generally,  for a Banach space $X$ and a measure space $(S,\mu)$, we write $L^p(S,\mu;X)$ for the  Bochner-space of $X$-valued, $p$-integrable functions. If $S\subset \bR$, we write $C(S;X)$ for the space of continuous functions with values in $X$ equipped with the topology of uniform convergence on compacts. The subset of bounded functions, we denote by $C_b(S;X)$ and the class of functions $f\in C_b(S;X)$ with $\partial_s^lf\in C_b(S;X)$ for  $l\le k$ is denoted by $C^k_b(S;X)$. We denote the $d$-dimensional flat torus by $\bT^d$ and the set of $X$-valued periodic distributions, i.e., the linear and continuous mappings from $C^\infty(\bT^d)$ to $X$ by $\mathcal{D}'(\mathbb{T}^d;X)$. Then $H^{s,q}(\bT^d;X)$ denotes the $X$-valued Bessel potential space, consisting of those $f\in \mathcal{D}'(\mathbb{T}^d;X)$ satisfying 
	\begin{equation*}\label{definition bessel}
		\| f\|_{H^{s,q}(\mathbb{T}^d;X)}  = 
		\|(1 - \Delta)^{s/2} f\|_{L^q(\mathbb{T}^d;X)} < \infty, \end{equation*}
	for $s\in \bR$, $q\in (1,\infty)$. 
	Writing $X_\theta=[X_0,X_1]_{\theta} $ and $X_{\theta,p} = (X_0,X_1)_{\theta,p}$ for the complex and real interpolation spaces of a compatible couple $(X_0,X_1)$, we define  Besov spaces by
	$$B^s_{q,p}(\mathbb{T}^d)= (H^{\lceil s\rceil+1,q}(\mathbb{T}^d),H^{\lfloor s\rfloor-1,q}(\mathbb{T}^d))_{\theta,p},\quad 
	\theta = \frac{s- \lfloor s\rfloor+1 }{\lceil s\rceil - \lfloor s\rfloor +2 }
	,$$
	if $p\in (1,\infty)$. For $\kappa\in \bR$, we define weight functions  $w_\kappa^a(t):=|t-a|^\kappa$ and write $L^p((a,b),w_\kappa^a;X)$ for $L^p((a,b),w_\kappa^a\d t;X)$ and  $w_\kappa=w_\kappa^0$. Weighted Sobolev spaces are then introduced via 
	\[
	W^{k,p}((a,b), w^a_\kappa;X ) = \bigl\{
	f\in L^p((a,b),w_\kappa^a;X) \,\big|\, \partial_t^l f\in L^p((a,b),w_\kappa^a;X),\, l\le k
	\bigr\},
	\]
	where $\partial_t^lf$ denotes the weak derivative. The resulting complex interpolation spaces are denoted  by 
	\[
	H^{\theta,p}( (a,b), w^a_\kappa;X  )= [L^p((a,b),w_\kappa^a;X) , W^{1,p}((a,b), w^a_\kappa;X )]_\theta.
	\]
	Lastly, for a Hilbert space $\mathcal{H}$ with orthonormal basis $(\xi_n)_{n\geq 1}$, the space of $\gamma$-radonifying operators consist of those bounded linear $T \colon \mathcal{H}\to X$, such that $\sum_{n\ge 1} \gamma_n T\xi_n$ converges in $L^2(\Omega;X)$, in which case
	\begin{equation*}\label{gamma_rad}
		\|T\|_{\gamma(\mathcal{H},X)}^2 = \bE\biggl[\biggl\| \sum_{n\ge 1}\gamma_nT \xi_n\biggr\|_X^2\biggr]<\infty,
	\end{equation*}
	where $(\gamma_n)_{n\ge 1}$ is a family of independent standard normally distributed random variables. 
	
	\medskip
	\noindent
	\textbf{Probability.} Throughout, $(\Omega, \mathfrak{A}, \mathbb{P})$ is a fixed probability space with filtration   $\mathfrak{F}$. Moreover, we let 
	$W^i$, $i\in \{1,2\}$, be independent $\ell^2$-cylindrical $\mathfrak{F}$-Wiener processes, i.e., $(W^ie_k )_{k\in \bN,i\in \{1,2\} }$ is a family of  independent $\mathfrak{F}$-Brownian motions, where $e_k$ denotes the $k$-th unit vector in $\ell^2$.

	\section{Critical spaces,  results and discussion}\label{sec2}\setcounter{equation}{0}
	In this section, we derive scaling critical spaces for the Keller--Segel equation, state the definition of solutions and the assumptions, and present our main results.

	\subsection{Scaling critical  spaces}\label{sec: scaling critical spaces}
	In order to identify scaling critical spaces for \eqref{25.06.19.16.01}, we rescale space and time  according to the  heat operator, i.e., we set 
	
	\begin{equation*}\label{E8}
		u_{\lambda}(t,x):=\lambda^{h_1}u(\lambda t,\lambda^{1/2}x),\quad v_{\lambda}(t,x):=\lambda^{h_2}v(\lambda t,\lambda^{1/2}x),
	\end{equation*}
	for parameters $h_1$ and $h_2$ to be determined. Neglecting contributions of the noise terms, which we take to be of lower order, cf.\ Assumption \ref{Ass_noise} below, scaling criticality for the equation for $u$ requires that 
	\begin{align*}
		0 &= (\partial_tu_{\lambda}-\Delta u_{\lambda}+\mathrm{div}(u_{\lambda}\nabla v_{\lambda} ))(t,x)\\&=(\lambda^{1+h_1}\partial_tu-\lambda^{1+h_1}\Delta u+\lambda^{1+h_1+h_2}\mathrm{div}(u\nabla v))(\lambda t,\lambda^{1/2}x),
	\end{align*}
	for any solution $(u,v)$ to the Keller--Segel system, i.e., that $h_2=0$.  Requiring also scaling criticality for $v$ gives 
	\begin{align*} 0 =
		(\partial_tv_{\lambda}-\Delta v_{\lambda}+u_{\lambda})(t,x)=(\lambda^{1+h_2}\partial_tv-\lambda^{1+h_2}\Delta v+\lambda^{h_1}u)(\lambda t,\lambda^{1/2}x),
	\end{align*}
	where we find $h_1 = 1+ h_2$.
	In the critical case that $h_2=0$, this simply means  $h_1=1$. For the initial data, we have therefore identified the critical scaling relation  
	$$
	u_{0,\lambda}(x):=\lambda u_0(\lambda^{1/2}x),\ \text{ and } \ v_{0,\lambda}(x):=v_0(\lambda^{1/2}x),
	$$
	which is respected by the (homogeneous) Besov norms in
	\begin{align}\label{E_4}
		{B}_{q,p}^{d/q-2}:\ \|u_{0,\lambda}\|_{{B}_{q,p}^{d/q-2}}&\simeq \lambda^{1+\frac{1}{2}\left(\frac{d}{q}-2-\frac{d}{q}\right)}\|u_0\|_{{B}_{q,p}^{d/q-2}}=\|u_0\|_{{B}_{q,p}^{d/q-2}},
	\end{align}
	and
	\begin{align}\label{E_5}
		{B}_{q_v,p}^{d/q_v}:\ \|v_{0,\lambda}\|_{{B}_{q_v,p}^{d/q_v}}&\simeq \lambda^{\frac{1}{2}\left(\frac{d}{q_v}-\frac{d}{q_v}\right)}\|v_0\|_{{B}_{q_v,p}^{d/q_v}}=\|v_0\|_{{B}_{q_v,p}^{d/q_v}}.
	\end{align}
	The latter scale naturally emerges from the trace theory for stochastic maximal $L^p_\kappa$-theory in Bessel potential spaces as we see in the following section, cf.~\cite{ALV}. 
	{The scaling critical spaces are then precisely the ones which require sharp estimates on the Keller--Segel nonlinearity to prove local well-posedness by means of \cite{AV_SQEE_pt1,AV_SQEE_pt2}, see Section \ref{sec3} below.}

	\subsection{The Keller--Segel equation as a stochastic evolution equation}\label{sec: Keller-segel as evolution equation}
	To make use of the framework \cite{AV_SQEE_pt1,AV_SQEE_pt2}, we consider the equation \eqref{25.06.19.16.01}--\eqref{25.06.19.16.02} using stochastic maximal $L^p_\kappa$-estimates in
	\begin{equation*}\label{eq: definition of X0, X1}
		(X_0 , X_1) \,=\, (H^{s,q}(\bT^d)\times H^{s_v,q_v} (\bT^d) ,  H^{2+s,q}(\bT^d)\times H^{2+s_v,q_v}(\bT^d) ),
	\end{equation*}
	with the usual parameter restriction that 
	{
		\begin{equation}\label{Eq_standard}
			s,s_v \in \bR, \quad  q,q_v \in [2,\infty), \, p\in (2,\infty),\, \kappa \in [0,p/2-1) \quad\text{or}\quad q=q_v=p=2, \,\kappa =0 ,
		\end{equation}
	}
	from \cite[Assumption 3.1]{AV_SQEE_pt1}. 
	Following \cite{schmeisser}, the resulting complex  interpolation spaces are 
	\begin{equation}\label{eq: definition of X beta}
		X_{\beta}\,=\,[X_0,X_1]_{\beta}\,=\,H^{2\beta+s,q}(\mathbb{T}^d)\times H^{2\beta+s_v,q_v}(\mathbb{T}^d),\qquad \beta\in (0,1)  ,
	\end{equation}
	while real interpolation yields
	\[
	X_{\beta,p}\,=\, (X_0,X_1)_{\beta,p} \,=\, B_{q,p}^{2\beta+s}(\mathbb{T}^d)\times B_{q_v,p}^{2\beta+s_v}(\mathbb{T}^d),\qquad \beta\in (0,1),\;p\in (1,\infty).
	\]
	To ensure scaling criticality of the resulting `trace space' 
	\begin{equation}\label{t_space}
		X_{1 -  (1+\kappa) /p,  p} \,=\,
		B^{s +2 - 2(1+\kappa)/p}_{q,p}(\bT^d)\times  B^{s_v +2 - 2(1+\kappa)/p}_{q_v,p}(\bT^d),
	\end{equation}
	and to facilitate our proof of local well-posedness, we impose that
	\begin{align}
		\label{Eq_pc1_new}
		&s\in (-2,-1], \; s_v \in (-1,1],
		\\    
		\label{Eq_pc2_new}
		&s_v + 2 - 2(1+\kappa)/p -d/q_v \,=\, 0 ,
		\\
		\label{Eq_pc3_new}
		&s + 2 -d/q \,=\, s_v -d/q_v ,\\
		\label{Eq_pc4_new}
		&s \ge -d/q , \,  s_v \ge  -d/q_v ,\\
		\label{Eq_pc5_new}
		&s_v\le s+2.
	\end{align}
	Indeed, \eqref{Eq_pc2_new} and \eqref{Eq_pc3_new} imply that the leading order part of the norm of \eqref{t_space} is precisely the scaling critical quantity \eqref{E_4}--\eqref{E_5}. 
	The remaining assumptions are used in our estimates on the nonlinearities in \eqref{25.06.19.16.01}, cf.\ Section \ref{sec3} below.
	\begin{rem}[Discussion of the parameter asssumptions \eqref{Eq_pc1_new}--\eqref{Eq_pc5_new}]\label{rmk: examples of choices of parameters} \,
		
		\begin{enumerate}[(i)]
			\item Firstly, let us point our that the restrictions $s>-2$ and $s_v \in (-1,1]$ from \eqref{Eq_pc1_new} are in fact implied by the remaining conditions. Indeed, combining the upper bound $s\le -1$ from \eqref{Eq_pc1_new} with \eqref{Eq_pc5_new}, immediately limits $s_v\le 1$. On the other hand, by adding $s_v+d/q_v$ to both sides of \eqref{Eq_pc2_new}, and dividing by 2, we obtain that 
			\begin{align*}
				s_v + 1 = (1+\kappa)/p + (s_v+d/q_v)/2.
			\end{align*}
			The right hand side of this is guaranteed to be positive by \eqref{Eq_pc4_new}, and thus $s_v>-1$. Similarly, by inserting \eqref{Eq_pc3_new} in \eqref{Eq_pc2_new}, adding to both $s+d/q$ and dividing by $2$, we get that
			\begin{align}\label{e:1}
				s + 2 = (1+\kappa)/p + (s+d/q)/2,
			\end{align}
			which is positive by \eqref{Eq_pc4_new} as well. The latter enforces then $s>-2$.
			\item The conditions \eqref{Eq_pc1_new}--\eqref{Eq_pc5_new} restrict  us to work in dimension $d\ge 3$: Indeed, 
			rearranging \eqref{e:1}, we find that
			\[
			d/(2q) \,=\, s/2+2-(1+\kappa)/p \,\ge\,-d/(2q) +2-(1+\kappa)/p ,
			\] 
			after inserting \eqref{Eq_pc4_new}. In other words, we impose that
			\begin{equation}\label{e:2}
				d/2\,\ge\, 
				d/q \,\ge\, 2-(1+\kappa)/p\,\ge \, 3/2,
			\end{equation}
			where the first and last estimate are due to \eqref{Eq_standard}. In other words,  in dimensions $d\in \{1,2\}$ the above conditions \eqref{Eq_standard} and \eqref{Eq_pc1_new}--\eqref{Eq_pc5_new} would  not be compatible.        
			\item Even in the case $d=3$ the chain of inequalities \eqref{e:2} requires that $p=2$ and $\kappa=0$, which in turn enforces that $q=q_v=2$, again by \eqref{Eq_standard}. The conditions \eqref{Eq_pc2_new}--\eqref{Eq_pc3_new} then identify $s_v = 1/2$ and $s=-3/2$. Thereby, the only possible choice of critical spaces in dimension $d=3$ is given by 
			\[
			\bigl(p,\kappa,(s,q),(s_v,q_v)\bigr) \,=\, \bigl(2,0,(-3/2,2),(1/2,2)\bigr),
			\]
			leading to the scaling critical trace space
			\[
			B_{2,2}^{-1/2}(\mathbb{T}^d)\times B_{2,2}^{3/2}(\mathbb{T}^d)=H^{-1/2,2}(\bT^d)\times H^{3/2,2}(\bT^d).
			\]
			In dimension $d=4$, there is already more flexibility and for example 
			\begin{equation*}
				\begin{bmatrix}
					p & \kappa  & s & q  &s_v&q_v \\
					2 & 0  &-1 & 2 &   1 & 2 \\
					4 & 3/5  & - 8/5 &5/2&  0 & 10/3\\
					4 & 3/5  & - 8/5 &5/2&  -1/5 & 4  
				\end{bmatrix}
			\end{equation*}
			are three distinct choices in line with \eqref{Eq_standard} and \eqref{Eq_pc1_new}--\eqref{Eq_pc5_new}.
		\end{enumerate}
	\end{rem}
	
	In order to treat \eqref{25.06.19.16.01}--\eqref{25.06.19.16.02} as a stochastic evolution equation, we define the linear block operator
	\begin{equation}
		\label{def:A}
		-A:=\begin{pmatrix}
			\Delta&0\\
			\mathrm{Id}&\Delta-\mathrm{Id}
		\end{pmatrix},
	\end{equation}
	acting on the pair $U = (u,v)$. 
	Moreover, we denote the nonlinearity arising from  cross diffusion by
	\begin{equation}
		\label{Eq_F1}
		F(u,v)\,:=\,(-\mathrm{div}(u\nabla v),0).
	\end{equation}
	We further interpret  the stochastic terms as the operators
	\begin{equation}\label{def:G}
		G(u,v)(e_n, 0) \,:=\, (g_n^1(u,v,\nabla v),0),\ \text{ and } \ G(u,v) (0, e_m ) \,:=\, (0,g^2_m(v)),
	\end{equation}
	acting on $\mathcal{H}:=\ell^2 \times \ell^2$, 
	where $(e_n)_{n\geq1}$ is the standard orthonormal basis of $\ell^2$.
	
	With this notation at hand, \eqref{25.06.19.16.01}--\eqref{25.06.19.16.02} may be reformulated as the stochastic evolution equation
	\begin{equation}
		\label{25.06.19.16.54}
		\begin{cases}
			\mathrm{d}U+AU\,\mathrm{d}t=F(U)\,\mathrm{d}t+G(U)\,\mathrm{d}W,\\
			U(0)=(u_0,v_0),
		\end{cases}
	\end{equation}
	where $U=(u,v)$ and $W = (W^1, W^2)$ is an $\mathcal{H}$-cylindrical $\mathfrak{F}$-Wiener process. 
	
	\subsection{Definition of solutions}
	To give a rigorous meaning to \eqref{25.06.19.16.54}, we need to capture also stochastically integrable $X_{1/2}$-valued processes, which naturally leads to  $\gamma$-radonifying operators, more precisely, the class $\gamma(\mathcal{H},X_{1/2})$. We note that by the $\gamma$-Fubini isomorphism \cite[Theorem 9.4.8]{hytonen2018analysis} we have 
	\begin{align*}
		\gamma(\mathcal{H},X_{1/2}) &= \gamma(\ell^2\times \ell^2,H^{1+s,q}(\bT^d) ) \times \gamma(\ell^2\times \ell^2,H^{1+s_v,q_v}(\bT^d) ) \\& = H^{1+s,q}(\ell^2\times \ell^2)\times H^{1+s_v,q_v}(\ell^2\times \ell^2).
	\end{align*}
	With this at hand, we recast the solution concept from \cite[Definitions 4.3--4.4]{AV_SQEE_pt1} to our setting, where in the third point we make use of the subtle adjustment observed in  \cite[Remark 5.6]{AV_SQEE_pt2}.
	
	\begin{defn}[Solutions to \eqref{25.06.19.16.01}]\label{def2.2}
		Assume that the parameters $s,s_v,q,q_v,p,\kappa$ satisfy \eqref{Eq_standard}. 
		Let also $\sigma:\Omega\to[0,\infty]$ be a stopping time, and $(u,v):[0,\sigma)\times\Omega\to H^{2+s,q}(\mathbb{T}^d)\times H^{2+s_v,q_v}(\mathbb{T}^d)$ be an $\mathfrak{F}$-adapted process.
		\begin{enumerate}[(i)]
			\item The pair $((u,v),\sigma)$ is called an \emph{$L^p_{\kappa}$-strong solution} to the stochastic Keller--Segel system \eqref{25.06.19.16.01} if, $\bP$-a.s.,
			\begin{align*}
				u\in L^{p}\left((0,\sigma),w_\kappa;H^{2+s,q}(\mathbb{T}^d)\right)\cap C\left([0,\sigma];B^{s+2-{2(1+\kappa)}/{p}}_{q,p}(\mathbb{T}^d)\right),& \\
				\label{eqn_reg2}
				v\in L^{p}\left((0,\sigma),w_{\kappa}; H^{2+s_v,q_v}(\mathbb{T}^d)\right)\cap C\left([0,\sigma]; B^{s+2-{2(1+\kappa)}/{p}}_{q_v,p}(\mathbb{T}^d)\right), &\\
				\mathrm{div}(u\nabla v)\in L^1\left((0,\sigma);H^{s,q}(\mathbb{T}^d)\right),\nonumber&\\\nonumber
				g^1(u,v,\nabla v) \in  L^2((0,\sigma);H^{1+s,q}(\bT^d;\ell^2)),\quad 
				g^2(v) \in  L^2((0,\sigma);H^{1+s_v,q_v}(\bT^d ;\ell^2)),&
			\end{align*}
			and 
			\begin{align*}
				&u_t-u_0-\int_0^t\Delta u\,\d s=-\int_0^t\mathrm{div}(u\nabla v)\,\mathrm{d}s+\sum_{n\geq 1}\int_0^t g_n^{1}(u,v ,\nabla v)\,\mathrm{d}W^1_n,
			\end{align*}
			as well as
			\begin{equation*}   
				v_t-v_0-\int_0^t\Delta v+u-v\,\mathrm{d}s=\sum_{n\geq 1}\int_0^t g^2_n(v)\,\mathrm{d}W_n^2 ,
			\end{equation*}for all $t\in[0,\sigma]$ in $ H^{s,q}(\mathbb{T}^d)$ and $H^{s_v,q_v}(\mathbb{T}^d)$, respectively.
			\item \label{item_localizing_Seq} The pair $ ((u,v), \sigma) $ is called an \emph{$ L^p_\kappa $-local solution} to \eqref{25.06.19.16.01} if there exists stopping times  $ \sigma_n \uparrow \sigma $, $\bP$-a.s., such that $ ((u,v), \sigma_n) $ is an $ L^p_\kappa $-strong solution to \eqref{25.06.19.16.01} for each $ n $.

			\item An \emph{$ L^p_\kappa $-local solution}  $ ((u,v), \sigma) $ to \eqref{25.06.19.16.01} is called \emph{maximal} if for any other  \emph{$ L^p_\kappa $-local solution} solution $ ((\tilde{u},\tilde{v}), \tau) $ with $(\tilde u_0, \tilde v_0)=(u_0,v_0)$, we have $ \tau \leq \sigma $, $\bP$-a.s., and
			$$(\tilde{u}_t,\tilde{v}_t) = 
			(u_t,v_t)   \quad \text{for all } t \in [0, \tau), \text{ $\bP$-almost surely.}
			$$
		\end{enumerate}    
	\end{defn}

	\subsection{Assumptions and statement of the main results}\label{sec: results}
	
	For the local well-posedness we allow for the following choice of functions $g^i$, $i\in \{1,2\}$. 
	
	\begin{assumption}[Noise coefficients: local well-posedness]
		\label{Ass_noise}
		\begin{enumerate}[(i)]
			\item 
			\label{Ai}
			Assume that $g^1\colon \bR^{2+d} \to \ell^2$ is Lipschitz
			\begin{equation}\label{eqB}
				\|g^1(z) - g^1(\bar{z})\|_{\ell^2}\,\lesssim\, |z-\bar{z}|, \  \text{ for every }\ z,\bar{z}\in\mathbb{R}^{2+d}.
			\end{equation}
			
			\item  \label{Aii}
			Assume that $g^2\colon \bR \to \ell^2$ is a function satisfying $(g^2)' \in C^2_b(\mathbb R;\ell^2)$.
		\end{enumerate}
	\end{assumption}
	
	The reason for the significantly stronger restriction on $g^2$ than $g^1$ in the above is that  $v$ lies in a space of higher regularity than $u$, making nonlinear estimates more involved, see Lemma \ref{lem: bound for g1} and Lemmas \ref{lemma: estimate for g^2 if sv is in -1 0}--\ref{lemma: estimate for g^2 if sv is in 0 1} below.
	Let us state our first result on the local well-posedness of \eqref{25.06.19.16.01} in critical spaces.
	\begin{thm}[Local well-posedness]\label{thm:local_wp}
		Assume that the parameters $s,s_v,q,q_v,p,\kappa$ satisfy \eqref{Eq_standard} and  \eqref{Eq_pc1_new}--\eqref{Eq_pc5_new}, and let $g^{i}$, $i\in \{1,2\}$, satisfy Assumption \ref{Ass_noise}. 
		Then, for any $\mathfrak{F}_0$-measurable 
		\begin{equation}
			\label{eq_IV}
			(u_0,v_0)\colon \Omega\to B^{d/q-2}_{q,p}(\bT^d)\times  B^{d/q_v}_{q_v,p}(\bT^d),
		\end{equation}
		there exists an   $L^p_\kappa$-maximal solution $((u,v),\sigma)$ to \eqref{25.06.19.16.01} starting at \eqref{eq_IV} and satisfying $\sigma>0$, $\bP$-almost surely.
		{Moreover, if $((u,v),\sigma)$ and $((\bar{u},\bar{v}),\bar{\sigma})$ are $L^p_\kappa$-maximal solutions to \eqref{25.06.19.16.01}, then 
			\begin{align*}
				\sigma = \bar{\sigma}\qquad \text{and}\qquad 
				(u,v)|_{[0,\sigma)} = (\bar{u},\bar{v})|_{[0,\sigma)}, 
			\end{align*}
			a.s., on $\{{(u_0,v_0)=(\bar{u}_0,\bar{v}_0)}\}\in \mathfrak{F}_0$, and 
			for $p\ne 2$ we have the additional regularity 
			\begin{equation*}
				u \in  H^{\theta, p}((0,\sigma_n),w_\kappa; H^{2(1-\theta)+s,q}(\mathbb{T}^d) )
				\quad \text{and}\quad v\in  H^{\theta, p}((0,\sigma_n),w_\kappa; H^{2(1-\theta)+s_v,q_v}(\mathbb{T}^d)),
			\end{equation*}
			for each $\theta\in [0,1/2)$ and $ \sigma_n \uparrow \sigma $ as in Definition \ref{def2.2}~\eqref{item_localizing_Seq}.
		}
	\end{thm}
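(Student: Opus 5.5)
The plan is to verify the hypotheses of the abstract local well-posedness theorem of Agresti--Veraar, namely \cite[Theorem 4.8]{AV_SQEE_pt1} in the critical setting, complemented by \cite[Remark 5.6]{AV_SQEE_pt2} and the regularity statement \cite[Theorem 4.10]{AV_SQEE_pt2} / \cite[Proposition 3.1]{AV_SQEE_pt2} for the $H^{\theta,p}$ claim. We work with the abstract reformulation \eqref{25.06.19.16.54} on $(X_0,X_1)$.

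\textbf{Linear part.} We show that $A$ from \eqref{def:A} has stochastic maximal $L^p_\kappa$-regularity on $X_0$. The operator is lower triangular: its diagonal entries $-\Delta$ and $1-\Delta$ have a bounded $H^\infty$-calculus of angle $0$ on $H^{s,q}$ and $H^{s_v,q_v}$, after a harmless shift, so \cite{NVW12a} applies. The off-diagonal coupling $u\mapsto u$ in the $v$-equation maps $H^{2+s,q}$ boundedly into $H^{s_v,q_v}$ at a lower order. This uses $s_v\le s+2$ from \eqref{Eq_pc5_new} together with the Sobolev embedding $H^{s+2,q}\hookrightarrow H^{s_v,q_v}$, which by \eqref{Eq_pc3_new} is exactly a critical-index embedding, valid since $q\le q_v$ follows. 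Because of the triangular structure, we can equivalently solve the $u$-equation first and then the $v$-equation with forcing $u$, which gives maximal regularity for the block.

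\textbf{Nonlinear estimates.} The core is the critical bilinear estimate required by \cite[Assumption 4.1]{AV_SQEE_pt1}:
\[
\|F(U)-F(\bar U)\|_{X_0}\lesssim (\|U\|_{X_\beta}+\|\bar U\|_{X_\beta})\|U-\bar U\|_{X_\beta},
\]
with $\beta$ chosen so that the criticality condition $2\beta-1 = 1-(1+\kappa)/p$, i.e.\ $2(\beta-1)\cdot$(order) matches, holds with equality. Concretely, we need
\[
\|\mathrm{div}(u\nabla v)\|_{H^{s,q}}\lesssim \|u\|_{H^{2\beta+s,q}}\|v\|_{H^{2\beta+s_v,q_v}}.
\]
We bound this by $\|u\nabla v\|_{H^{s+1,q}}$, noting that $s+1\le 0$. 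We then use a duality/paraproduct product estimate in negative Sobolev spaces of the form $H^{a,q}\cdot H^{b,r}\hookrightarrow H^{a+b-d/r,q}$, valid when $a<0<b$ (or with borderline cases handled by Sobolev embedding) and $a+b>0$ as required for such products. Here $a=2\beta+s$ and $b=2\beta+s_v-1$. The conditions \eqref{Eq_pc2_new}--\eqref{Eq_pc4_new} are exactly what make the Sobolev indices add up with equality at $\beta=1-(1+\kappa)/(2p)+\ldots$, i.e.\ the critical $\beta_1$. Meanwhile \eqref{Eq_pc4_new} ensures $a+b>0$, so the product is well defined. I expect this case analysis on signs of $a,b$, in particular the endpoint $s=-1$ or $s_v$ near $\pm1$, to be the main obstacle. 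I would first try the standard estimate $\|fg\|_{H^{-\alpha,r}}\lesssim\|f\|_{H^{-\alpha,q}}\|g\|_{H^{\gamma,q_v}}$ with $\gamma>\alpha$, obtained by duality from the positive-regularity product rule, and resort to Besov paraproducts only at borderline indices.

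\textbf{Noise.} We need $G$ to be Lipschitz from $X_{1}$-type spaces into $\gamma(\mathcal H,X_{1/2})=H^{1+s,q}(\ell^2)\times H^{1+s_v,q_v}(\ell^2)$, possibly with subcritical growth, and this is where Lemma \ref{lem: bound for g1} and the $g^2$ lemmas enter. For $g^1$: since $1+s\le 0$, we have $L^{r}\hookrightarrow H^{1+s,q}$ for suitable $r$, and the Lipschitz bound \eqref{eqB} gives $\|g^1(U)-g^1(\bar U)\|_{L^r(\ell^2)}\lesssim\|u-\bar u\|_{L^r}+\|v-\bar v\|_{W^{1,r}}$. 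Both terms are controlled by $X_\beta$-norms with $\beta<1$ via Sobolev embedding, so the estimate is globally Lipschitz and of lower order. For $g^2$ we split into $s_v\in(-1,0]$, handled in the same way, and $s_v\in(0,1]$, handled by a fractional chain rule using $(g^2)'\in C^2_b$ together with the embedding of the trace space into $L^\infty$-type spaces (or a local Lipschitz estimate in $H^{1+s_v,q_v}$ controlled by $X_\beta$ norms). In both cases the resulting bounds fit \cite[Assumption 4.1]{AV_SQEE_pt1}.

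With these ingredients, \cite[Theorem 4.8]{AV_SQEE_pt1} yields existence of a maximal solution with $\sigma>0$ a.s. for $\mathfrak F_0$-measurable data in the trace space \eqref{t_space}. By \eqref{Eq_pc2_new}--\eqref{Eq_pc3_new}, this trace space equals $B^{d/q-2}_{q,p}\times B^{d/q_v}_{q_v,p}$. The same theorem gives uniqueness and localization in $\{(u_0,v_0)=(\bar u_0,\bar v_0)\}$, and the $H^{\theta,p}$ regularity for $p\neq2$ follows from the stochastic maximal regularity space embedding stated there.
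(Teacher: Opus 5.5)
Your architecture matches the paper's: triangular $\mathcal{SMR}$ for $A$, a critical bilinear bound for $F$, noise bounds, then \cite[Theorem 4.8]{AV_SQEE_pt1}. However, your treatment of $g^2$ contains a step that fails as stated, and it skips the genuinely delicate verification.

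For $s_v\in(-1,0]$ the target $H^{1+s_v,q_v}(\bT^d;\ell^2)$ has \emph{positive} order $1+s_v\in(0,1]$, so it cannot be handled ``in the same way'' as $g^1$. There, the dual embedding $L^R\hookrightarrow H^{1+s,q}$, available only because $1+s\le 0$, reduced everything to the pointwise Lipschitz bound. No Lebesgue space embeds into $H^{1+s_v,q_v}$, and a pointwise Lipschitz bound gives no control of a positive-order norm of $g^2(v)-g^2(\bar v)$. The fix is as follows:
\begin{itemize}
\item Write $g^2(v)-g^2(\bar v)=(v-\bar v)\int_0^1(g^2)'(v_\theta)\,\mathrm{d}\theta$.
\item Use the product estimate $\|fh\|_{H^{1+s_v,q_v}}\lesssim\|f\|_{H^{1+s_v,q_v}}\|h\|_{L^\infty}+\|f\|_{L^\infty}\|h\|_{H^{1+s_v,q_v}}$ and the chain rule with $(g^2)''$ bounded.
\item This creates the quadratic term $\|v-\bar v\|_{L^\infty}(\|v\|_{H^{1,q_v}}+\|\bar v\|_{H^{1,q_v}})$, so $\rho=1$.
\item You must check that this term sits exactly on the critical line. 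It does, because $H^{\gamma_v,q_v}\hookrightarrow L^\infty\cap H^{1,q_v}$ for $\gamma_v=2\beta+s_v$, $\beta=1-\frac{1+\kappa}{2p}$.
\end{itemize}

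For $s_v\in(0,1]$, the $v$-component $B^{d/q_v}_{q_v,p}$ of the trace space does \emph{not} embed into $L^\infty$ when $p>1$. Hence $\|v-\bar v\|_{L^\infty}$ must be paid for in $H^{d/q_v+\eta,q_v}$, i.e.\ with $\beta=1-\frac{1+\kappa}{p}+\frac{\eta}{2}$. Meanwhile, the $H^{1+s_v,q_v}$-norm of $(g^2)'(v_\theta)$ is obtained by interpolating between $H^{1,q_v}$ and $H^{2,q_v}$. This produces $\|v\|_{H^{2,q_v}}$ and, through $(g^2)'''(v_\theta)|\nabla v_\theta|^2$ combined with the interpolation, the power $\|v\|_{H^{1+d/(2q_v),q_v}}^{1+s_v}$, i.e.\ $\rho=1+s_v$. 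The $\eta$-loss must be compensated by placing these factors in strictly subcritical spaces. For them you have to check $\rho_j(\varphi_j-1+\frac{1+\kappa}{p})+\beta_j\le 1$ together with $1-\frac{1+\kappa}{p}<\beta_j\le\varphi_j<1$. This is exactly where the paper's constraints on $\eta$, $\nu$ and $\varepsilon$ come from, and ``the resulting bounds fit'' does not address it.

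For $F$, your sign bookkeeping is off, which is why you anticipate an obstacle that is not there. By \eqref{Eq_pc2_new}--\eqref{Eq_pc4_new}, $a=2\beta+s=\frac12(s+\frac dq)\ge 0$ and $b=2\beta+s_v-1=\frac12(s_v+\frac{d}{q_v})\ge0$. So no negative-times-positive product, duality or paraproduct is needed. The paper simply uses $H^{\gamma,q}\hookrightarrow L^r$ and $H^{\gamma_v,q_v}\hookrightarrow H^{1,r_v}$, then H\"older into $L^R$, then $L^R\hookrightarrow H^{1+s,q}$ with $R\in(1,q]$. Your index count $a+b-d/q_v=s+1$ is correct, so this part is repairable. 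Note, however, that $a+b=0$ is attainable, e.g.\ $d=4$, $p=4$, $\kappa=0$, $q=16/7$, $q_v=16/3$, $s=-7/4$, $s_v=-3/4$. So ``$a+b>0$'' is not the relevant condition; what matters is $a,b\ge0$ and $R>1$.

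Finally, \cite[Theorem 4.8]{AV_SQEE_pt1} needs $A\in\mathcal{SMR}^\bullet_{p,\kappa}$, and the $H^{\theta,p}$-regularity claim rests on it. Your triangular argument does yield this (the paper instead uses transference from the diagonal operator), but you should say so. Also, the coupling $H^{s+2,q}\hookrightarrow H^{s_v,q_v}$ is a sharp embedding. It is harmless only because of the triangular structure, not because it is of lower order.
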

	
	To obtain a lower bound on the maximal existence time $\sigma$ for small initial data, we need to ensure that the noise nonlinearities are small for small values of $(u,v)$. 
	More precisely, we demand that 
	\begin{align}
		\label{assumption_for_global}
		g^{1}(0,0,0) = 0 \quad \text{and}\quad g^2(0) = 0.
	\end{align}
	The resulting growth bounds allow us to iterate the stochastic maximal regularity estimates repeatedly to obtain our second main result.
	\begin{thm}[Small data implies long time of existence, with high probability] \label{thm_global_sol}
		Assume that the parameters $s,s_v,q,q_v,p,\kappa$ satisfy \eqref{Eq_standard} and  \eqref{Eq_pc1_new}--\eqref{Eq_pc5_new}, and let $g^{i}$, $i\in \{1,2\}$, satisfy Assumptions \ref{Ass_noise} and \eqref{assumption_for_global}. 
		Moreover, let $(u_0,v_0)$ as in \eqref{eq_IV} be $\mathfrak{F}_0$-measurable  and $((u,v),\sigma)$ be the $L^p_\kappa$-maximal solution  to \eqref{25.06.19.16.01} provided by Theorem \ref{thm:local_wp}. 
		{Then, for each $T <\infty$, there exists a constant $C_{T}>0$ such that for each constant $\varepsilon \in (0,1)$, if}
		\[
		\bE \|(u_0,v_0) \|_{B_{q,p}^{d/q-2}(\mathbb{T}^d)\times B_{q_v,p}^{d/q_v}(\mathbb{T}^d)}^p \,\le\, C_{T} \varepsilon,
		\]
		then we have that 
		\[
		\bP( \sigma \ge T) \ge 1-\varepsilon.
		\]
		Moreover, there exists a stopping time $\tau\in (0, \sigma]$ such that $\bP(\tau \ge T) \ge 1 - \varepsilon$ and, if $p>2$, it holds that
		\begin{equation}
			\label{26.04.16.14.29}
			\begin{aligned}
				&\bE\left[ \mathbf{1}_{\tau \ge T} \|(u,v)\|^p_{H^{\theta, p}((0,T),w_\kappa; H^{2(1-\theta)+s,q}(\mathbb{T}^d)\times H^{2(1-\theta)+s_v,q_v}(\mathbb{T}^d))} \right] \\&
				\lesssim_\theta \bE \|(u_0,v_0)\|_{ B_{q,p}^{d/q-2}(\mathbb{T}^d)\times B_{q_v,p}^{d/q_v}(\mathbb{T}^d)}^p,
			\end{aligned}
		\end{equation}
		for all $\theta \in [0,1/2)$. 
		If $p=2$, the previous estimate \eqref{26.04.16.14.29} has to be replaced by 
		\begin{equation*}
			\begin{aligned}
				&\bE\left[ \mathbf{1}_{\tau \ge T} (\|(u,v)\|^2_{C([0,T]; H^{1+s,2}(\mathbb{T}^d)\times H^{1+s_v,2}(\mathbb{T}^d))}+\|(u,v)\|^2_{L^2([0,T],w_\kappa; H^{2+s,2}(\mathbb{T}^d)\times H^{2+s_v,2}(\mathbb{T}^d))}) \right]\\&
				\lesssim \bE \|(u_0,v_0)\|_{H^{d/2-2,2}(\mathbb{T}^d)\times H^{d/2,2}(\mathbb{T}^d)}^2.
			\end{aligned}
		\end{equation*}
	\end{thm}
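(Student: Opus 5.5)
We derive Theorem \ref{thm_global_sol} from the stability results for small data of Agresti and Veraar \cite{AV_SQEE_pt1,AV_SQEE_pt2}, applied to \eqref{25.06.19.16.54}. The argument has three steps.

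\textbf{Step 1: bilinear bounds without constant terms.} The nonlinear estimates from the proof of Theorem \ref{thm:local_wp} (Section \ref{sec3}) show that $F$ and $G$ satisfy the critical local Lipschitz conditions of \cite[Assumption 4.1]{AV_SQEE_pt1}. For $F$ this gives
\[
\|F(U)-F(\bar U)\|_{X_0}\lesssim (\|U\|_{X_{\beta}}+\|\bar U\|_{X_\beta})\|U-\bar U\|_{X_\beta}
\]
with critical exponent $\beta=1-\frac{1+\kappa}{2p}$ (up to a lower-order correction). For $G$ it gives a Lipschitz bound in $\gamma(\mathcal H,X_{1/2})$ with a bounded constant plus a quadratic part. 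Under \eqref{assumption_for_global} we have $F(0)=0$ and $G(0)=0$. Since $F$ is a genuine bilinear form, $\|F(U)\|_{X_0}\lesssim\|U\|_{X_\beta}^2$. For $G$, the Lipschitz property of $g^1$ together with $g^1(0)=0$ gives a linear bound $\|G^1(U)\|\lesssim \|U\|_{X_{1/2}}$. For $g^2$, Lemmas \ref{lemma: estimate for g^2 if sv is in -1 0}--\ref{lemma: estimate for g^2 if sv is in 0 1} combined with $g^2(0)=0$ give a bound of the form $\|g^2(v)\|\lesssim \|v\|_{X_{1/2}}+\|v\|_{X_{\beta}}^{2}$.

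\textbf{Step 2: a priori estimate on a fixed interval.} We work on $[0,T]$ and treat the linear part of $G$ as a perturbation. The operator $A$ from \eqref{def:A} is lower block-triangular with Laplacians on the diagonal, so $A$ (shifted, if necessary) has stochastic maximal $L^p_\kappa$-regularity on $X_0$. The perturbation $U\mapsto$ (linear part of $G(U)$) is of order $1/2$ in $X$, so it is subordinate to $A$. The standard perturbation result for stochastic maximal regularity \cite{AV_SQEE_pt1} then gives a linear solution operator on $[0,T]$ whose constant $K_T$ depends on $T$; this dependence is allowed. We then set up the fixed point in the ball
\[
\{\|U\|_{\mathcal{E}_T}\le R\}
\]
of $L^p(\Omega;\mathcal{E}_T)$, where $\mathcal{E}_T=H^{\theta,p}((0,T),w_\kappa;X_{1-\theta})\cap C([0,T];X_{1-(1+\kappa)/p,p})$. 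This requires truncating the nonlinearity by a cutoff $\Theta_R(\|U\|_{\mathcal E_t})$, in the usual way. The quadratic terms are then bounded by $K_T R\|U\|$, which is contractive once $R\le 1/(2K_T)$. This yields a global solution $U_R$ of the truncated equation with
\[
\bE\|U_R\|^p_{\mathcal{E}_T}\le C\,\bE\|U_0\|^p.
\]

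\textbf{Step 3: stopping time and probability bound.} Define
\[
\tau=\inf\{t:\|U_R\|_{\mathcal E_t}\ge R\}\wedge T\quad(\text{replaced by }\sigma \text{ on the event } \tau= T).
\]
On $[0,\tau)$ the truncation is inactive, so $U_R$ solves \eqref{25.06.19.16.54}. By the maximality and uniqueness in Theorem \ref{thm:local_wp}, this gives $\tau\le\sigma$ and $U=U_R$ on $[0,\tau)$. By Chebyshev's inequality,
\[
\bP(\tau<T)\le \bP\bigl(\|U_R\|_{\mathcal E_T}\ge R\bigr)\le R^{-p}C\,\bE\|U_0\|^p\le \varepsilon
\]
once $C_T=R^p/C$. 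Continuity of the path norm also gives $\|U\|_{\mathcal E_\tau}=R$ on $\{\tau<T\}$, which justifies the first inequality. This proves $\bP(\sigma\ge T)\ge\bP(\tau\ge T)\ge1-\varepsilon$, and the energy bound \eqref{26.04.16.14.29} follows from $\mathbf 1_{\tau\ge T}\|U\|\le\|U_R\|$. For $p=2$, $\kappa=0$, the same argument works with the space $\mathcal E_T=L^2(H^{2+s})\cap C(H^{1+s})$.

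\textbf{Main obstacle.} The delicate point is Step 2 for the quadratic part of $g^2$. A critical estimate requires a truly superlinear bound in the $X_\beta$-norms, with no $R$-independent constant, together with the correct interpolation exponents. We would obtain it from the Lemmas cited above by tracking the dependence on $g^2(0)$ and $(g^2)'(0)$. One must also check that the truncation preserves adaptedness and the Lipschitz property, which is done as in \cite[Section 4]{AV_SQEE_pt1}.
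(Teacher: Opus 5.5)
Your overall strategy is a legitimate alternative to the paper's route: one contraction for a truncated equation on all of $[0,T]$, with a $T$-dependent linear constant $K_T$, a small truncation radius $R=R_T$, and then Chebyshev's inequality. Step 2, however, has a concrete gap.

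Under Assumption \ref{Ass_noise}\,(i) and \eqref{assumption_for_global}, $g^1$ is merely Lipschitz with $g^1(0)=0$. So $G^1(U)=g^1(u,v,\nabla v)$ is a genuinely nonlinear Nemytskii operator with no ``linear part''. The perturbation theorems for stochastic maximal regularity in \cite{AV_SQEE_pt1} concern a \emph{linear} $B\colon X_1\to\gamma(\mathcal H,X_{1/2})$, so they do not produce the linear solution operator you use.

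Nor can $G^1$ be moved into the contraction. It has linear growth, so its Lipschitz constant in $L^p((0,T),w_\kappa;\gamma(\mathcal H,X_{1/2}))$ is of order $L_{g^1}T^{\iota}$, which is not small for large $T$. Unlike the superlinear terms, it is not improved by the cutoff $\Theta_R$.

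Your bound $\|G^1(U)\|\lesssim\|U\|_{X_{1/2}}$ is also wrong. For $u\in H^{1+s,q}(\bT^d)$ with $1+s\le 0$, the composition $g^1(u,v,\nabla v)$ is not even defined. Lemma \ref{lem: bound for g1} gives $\|U\|_{X_\beta}$ with $\beta=1-\frac{1+\kappa}{2p}$, which is still lower order since $\beta<1$.

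To close Step 2 you need a separate global result on $[0,T]$ for $\d U+AU\,\d t=f\,\d t+(G^1(U)+g)\,\d W$. It should provide a nonlinear data-to-solution map that is Lipschitz into $L^p(\Omega;\mathcal E_T)$ with constant $K_T$. You could get this by a contraction on subintervals of length $h$, where the Lipschitz constant is $\lesssim h^\iota$, followed by concatenation; exponential time weights are another option. With this map in place of the linear one, the rest of your argument goes through.

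Two smaller repairs are needed.
\begin{enumerate}
\item Truncating with $\|U\|_{\mathcal E_t}$, which contains $\sup_{[0,t]}\|U\|_{X_{1-\frac{1+\kappa}{p},p}}$, forces $\tau=0$ on $\{\|U_0\|\ge R\}$. This contradicts $\tau\in(0,\sigma]$. Truncate instead with the critical norms $L^{2p}(w_\kappa;X_\beta)$ and $L^{(\rho_\vartheta+1)p}(w_\kappa;X_\vartheta)$, i.e.\ the paper's $\mathscr X(t)$. These vanish at $t=0$ and are controlled by $\mathcal E_T$ through \eqref{eq: interpolation into mathscr X}.
\item For $s_v\in(0,1]$, the bound on $g^2$ is not of the form $\|v\|_{X_{1/2}}+\|v\|_{X_\beta}^2$. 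It consists of the mixed terms in \eqref{eq: N estimate sv positive}. These are strictly subcritical, which is exactly what the proof of Theorem \ref{thm:local_wp} checks, so your ``main obstacle'' is harmless inside $\mathcal E_T$.
\end{enumerate}

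For comparison, the paper sidesteps the Lipschitz-noise issue by first working on a short interval $[0,T_0]$ (Lemma \ref{lemma:smallData}). There the linear-growth term carries a factor $T_0^\iota$ and is absorbed. It works directly with the maximal solution, without any truncated equation, using the blow-up criterion of \cite{AV_SQEE_pt2} and the function $\psi_R(x)=x/R-x^2$. It then iterates over $2\lceil T/T_0\rceil$ overlapping intervals with threshold $C_T=C_*/(2^{N_0+1}C_1^{N_0})$.

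Once repaired, your route replaces this iteration by a single contraction and a Chebyshev bound, with $C_T\simeq R_T^p/K_T^p$. The time-stepping is not avoided, though; it is moved into the construction of the Lipschitz-noise solution map.
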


	\section{Proof of local well-posedness}\setcounter{equation}{0}\label{sec3}
	In this section, we prove the local well-posedness of the stochastic Keller--Segel system \eqref{25.06.19.16.01}, i.e., Theorem \ref{thm:local_wp}.
	To do this, we first prove stochastic $L^p_{\kappa}$-maximal regularity estimates for the linear part of \eqref{25.06.19.16.01} in Subsection \ref{SS_lin}, and then prove bounds on the deterministic nonlinearity and the noise term in Subsections \ref{sec: Estimates on the deterministic nonlinearity} and \ref{sec: Estimates on the noise term}, respectively. We conclude in Subsection \ref{SS_lwp_conclusion}.
	\subsection{Stochastic maximal regularity for the linear part}\label{SS_lin}
	We recall from Subection \ref{sec: Keller-segel as evolution equation} that the equation \eqref{25.06.19.16.01} can be formulated as the evolution equation \eqref{25.06.19.16.54}, the linear part of which takes the form
	\begin{equation}
		\label{25.06.19.17.46}
		\begin{cases}
			\mathrm{d}U+AU\,\mathrm{d}t\,=\,f\,\mathrm{d}t+g\,\mathrm{d}W,\quad t\in(a,T),\\
			U(a)=0,
		\end{cases}
	\end{equation}
	for given inhomogeneities 
	$f$ and $g$.
	We first define what it means for the linear part $A$ to have stochastic maximal $L^p_{\kappa}$-regularity, cf.\ \cite[Definitions 3.4 and 3.5]{AV_SQEE_pt1}. For this purpose,   we momentarily consider two general type $2$ UMD-Banach spaces $X_1\hookrightarrow X_0$ and assume that either 
	\begin{itemize}
		\item $p\in (2,\infty) $ and $\kappa\in [0,p/2-1)$, or
		\item $p=2$, $\kappa=0$ and $X_0$ and $X_1$ are Hilbert spaces.
	\end{itemize}
	We denote also in this case $X_{\beta}\,=\,[X_0,X_1]_{\beta}$.
	\begin{defn}\label{def: definition of SMR}
		Let $X_0$ and $X_1$ be as above, and $A\colon X_1 \to X_0$ be linear and bounded.
		\begin{enumerate}[(i)]
			\item $A$ belongs to the class $\mathcal{SMR}_{p,\kappa}$ if the following holds: 
			For every $T\in(0,\infty)$, there exists $C_T>0$ such that for all intervals $[a,b]\subseteq [0,T]$, and for every pair of progressively measurable processes
			\begin{equation}
				\label{25.06.21.17.31}
				(f,g)\in L^p(\Omega;L^p((a,b),w_{\kappa}^a;X_0))\times L^p(\Omega;L^p((a,b),w_{\kappa}^a;\gamma(\mathcal{H};X_{1/2}))),
			\end{equation}
			there exists a unique $L_\kappa^p$-strong solution to \eqref{25.06.19.17.46}, i.e., a progressively measurable
			\begin{align*}\label{eqn264}
				U\in L^2((a,b)\times \Omega, w_\kappa^a;X_1), \qquad \text{s.t.}\quad 
				U_t +  \int_a^t AU\, \d s  = \int_a^t f\,\d s +\int_0^t
				g\,\d W ,
			\end{align*}for all $t\in [a,b]$, a.s., 
			and the latter satisfies the bound
			\begin{equation*}
				\label{25.06.25.13.17}
				\mathbb{E}\|U\|_{L^p((a,b),w_{\kappa}^a;X_1)}^p\leq C_T \mathbb{E}\|f\|_{L^p((a,b),w_{\kappa}^a;X_0)}^p+C_T\mathbb{E}\|g\|_{L^p((a,b),w_{\kappa}^a;\gamma(\mathcal{H},X_{1/2}))}^p.
			\end{equation*}
			
			\item For $p\in(2,\infty)$, we say that  $A$ belongs to $\mathcal{SMR}_{p,\kappa}^{\bullet}$ if
			\begin{itemize}
				\item $A\in\mathcal{SMR}_{p,\kappa}$,
				\item for all $\theta\in(0,1/2)$ and $T\in(0,\infty)$, there exists a constant $C_{T,\theta}>0$ such that for all intervals $[a,b]\subseteq [0,T]$, and for every pair of progressively measurable processes $(f,g)$ in \eqref{25.06.21.17.31},
				the $L_\kappa^p$-strong solution $U$ to \eqref{25.06.19.17.46} $U$  satisfies
				$$
				\mathbb{E}\|U\|_{H^{\theta,p}((a,b),w_{\kappa}^a;X_{1-\theta})}^p\leq C_{T,\theta} \mathbb{E}\|f\|_{L^p((a,b),w_{\kappa}^a;X_0)}^p+C_{T,\theta}\mathbb{E}\|g\|_{L^p((a,b),w_{\kappa}^a;\gamma(\mathcal{H},X_{1/2}))}^p.
				$$
			\end{itemize}
			\item We say that an operator $A$ belongs to $\mathcal{SMR}_{2,0}^{\bullet}$ if
			\begin{itemize}
				\item $A\in \mathcal{SMR}_{2,0}$,
				\item for all $T\in(0,\infty)$, there exists a constant $C_{T}>0$ such that for all intervals $[a,b]\subseteq [0,T]$, and for every pair of progressively measurable processes $(f,g)$ in \eqref{25.06.21.17.31},
				the $L^2$-strong solution $U$ to \eqref{25.06.19.17.46} satisfies
				$$
				\mathbb{E}\|U\|_{C([a,b];X_{1/2})}^2\leq C_{T} \mathbb{E}\|f\|_{L^2((a,b);X_0)}^2+C_{T}\mathbb{E}\|g\|_{L^2((a,b);\gamma(\mathcal{H},X_{1/2}))}^2.
				$$
			\end{itemize}
		\end{enumerate}
	\end{defn}
	We show that linear part of \eqref{25.06.19.16.01}, i.e., \eqref{def:A} belongs to $\mathcal{SMR}_{p, \kappa}^{\bullet}$.
	\begin{thm}\label{thm: A B in SMR}
		Assume \eqref{Eq_standard} and \eqref{Eq_pc1_new}--\eqref{Eq_pc5_new}, then the operator defined by
		$$
		-A := \begin{pmatrix}
			\Delta & 0 \\
			\mathrm{Id} & \Delta - \mathrm{Id}
		\end{pmatrix} \colon H^{s+2, q}(\mathbb{T}^d) \times H^{s_v+2, q_v}(\mathbb{T}^d) \to 
		H^{s, q}(\mathbb{T}^d) \times H^{s_v, q_v}(\mathbb{T}^d)
		$$
		belongs to $\mathcal{SMR}_{p, \kappa}^{\bullet}$.
	\end{thm}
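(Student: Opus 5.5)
The plan is to exploit the lower-triangular structure of $A$ and reduce the claim to the scalar heat operator. On the diagonal, $-\Delta$ on $H^{s+2,q}(\bT^d)\to H^{s,q}(\bT^d)$ and $1-\Delta$ on $H^{s_v+2,q_v}(\bT^d)\to H^{s_v,q_v}(\bT^d)$ are the relevant operators. First I would check that both belong to $\mathcal{SMR}^{\bullet}_{p,\kappa}$. The shifted operator $1-\Delta$ is sectorial and invertible on $H^{r,q}(\bT^d)$, $q\in[2,\infty)$, and it has a bounded $H^\infty$-calculus of angle $0$. Since $(1-\Delta)^{-r/2}$ is an isometric isomorphism from $H^{r,q}$ onto $L^q$ commuting with $\Delta$, it suffices to check this on $L^q(\bT^d)$, where it is classical. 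Moreover, $L^q(\bT^d)$ is UMD and of type $2$ for $q\ge 2$. The theorem of van Neerven, Veraar and Weis \cite{NVW12a}, in its weighted form as recorded in \cite{AV_SQEE_pt1} (see also \cite{AV25}), then gives $1-\Delta\in\mathcal{SMR}^\bullet_{p,\kappa}$ for $p>2$, $\kappa\in[0,p/2-1)$, with the $H^{\theta,p}(w_\kappa;X_{1-\theta})$ estimate included. In the Hilbert case $p=q=2$, $\kappa=0$, the classical $L^2$ theory (Da Prato--Zabczyk \cite{da2014stochastic}) gives both the maximal estimate and the $C([a,b];X_{1/2})$ bound. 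Stochastic maximal regularity is stable under bounded perturbations $X_1\to X_0$ of lower order and on finite time intervals; for instance, Gronwall-type arguments or \cite[Theorem 3.9]{AV_SQEE_pt1}-type perturbation results handle this. Hence $-\Delta=(1-\Delta)-\mathrm{Id}$ is also in the class, since $\mathrm{Id}\colon X_1\to X_0$ is compact and of lower order. Alternatively one can simply solve with $1-\Delta$ and absorb $\pm\mathrm{Id}$ via a fixed point on short intervals and then iterate.

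Next I would solve the system triangularly. Given $(f,g)=((f_1,f_2),(g_1,g_2))$, I would first solve $\d u-\Delta u\,\d t=f_1\,\d t+g_1\,\d W$ with $u(a)=0$ in $X_0^u=H^{s,q}$. This gives $u\in L^p(\Omega;H^{\theta,p}((a,b),w^a_\kappa;H^{2(1-\theta)+s,q}))$ with the estimates above. Then I would solve $\d v+(1-\Delta)v\,\d t=(f_2+u)\,\d t+g_2\,\d W$ in $H^{s_v,q_v}$. The step requiring care is that $u\in L^p(w_\kappa;H^{s_v,q_v})$, i.e.\ the coupling term is an admissible forcing. Here I would use \eqref{Eq_pc3_new} and \eqref{Eq_pc5_new}. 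Condition \eqref{Eq_pc3_new} says $s+2-d/q=s_v-d/q_v$, so the Sobolev embedding $H^{s+2,q}(\bT^d)\hookrightarrow H^{s_v,q_v}(\bT^d)$ holds whenever $q\le q_v$; if $q>q_v$, then on the torus $H^{s+2,q}\hookrightarrow H^{s+2,q_v}\hookrightarrow H^{s_v,q_v}$ by \eqref{Eq_pc5_new}. Either way $\|u\|_{L^p(w_\kappa;H^{s_v,q_v})}\lesssim\|u\|_{L^p(w_\kappa;X_1^u)}$, so the $v$-estimate is controlled by the data through the $u$-estimate. The same embedding at the level $X_{1-\theta}$ transfers the $H^{\theta,p}$ bounds; in the $p=2$ case the $C([a,b];X_{1/2})$ bound follows the same way.

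Uniqueness follows componentwise: the difference of two solutions has vanishing $u$-component by uniqueness for the heat equation, and then vanishing $v$-component. Constants depend only on $T$, giving uniformity over $[a,b]\subseteq[0,T]$. I expect the main obstacle to be bookkeeping the weighted setting with shifted weights $w^a_\kappa$ and citing precisely the perturbation and triangular results in \cite{AV_SQEE_pt1,AV25}. The key new ingredient is the embedding $X_1^u\hookrightarrow X_0^v$, which is exactly where \eqref{Eq_pc3_new} and \eqref{Eq_pc5_new} enter.
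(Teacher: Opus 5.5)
Your proposal is correct and follows essentially the paper's route. You solve triangularly, first $u$ from the heat equation and then $v$ with $u$ as an admissible forcing, and the key point is the embedding $H^{s+2,q}(\bT^d)\hookrightarrow H^{s_v,q_v}(\bT^d)$ from \eqref{Eq_pc3_new} and \eqref{Eq_pc5_new}.

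The paper differs only in bookkeeping:
\begin{enumerate}
\item It deduces the $\bullet$-estimates from the transference result \cite[Proposition 3.8]{AV_SQEE_pt1}, with reference operator $\mathrm{diag}(-\Delta,-\Delta)$, rather than reading them off blockwise as you do.
\item It handles the shift $\mathrm{Id}-\Delta$ through the substitution $v=e^{-t}\tilde v$, rather than through $H^\infty$-calculus or a perturbation argument.
\end{enumerate}

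As a side remark, your case $q>q_v$ is vacuous: \eqref{Eq_pc3_new} and \eqref{Eq_pc5_new} together give $d/q-d/q_v=s+2-s_v\ge 0$, i.e.\ $q\le q_v$.
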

	\begin{proof}
		To prove the assertion, we use the transference result (see, {e.g.}, \cite[Proposition 3.8]{AV_SQEE_pt1}).
		It suffices to verify the following two conditions:
		\begin{enumerate}[(1)]
			\item There exists a reference operator $\tilde{A} \in \mathcal{SMR}_{p,\kappa}^{\bullet}$.
			\item The operator $A$ belongs to $\mathcal{SMR}_{p,\kappa}$.
		\end{enumerate}
		
		\vspace{2mm}
		\noindent
		\emph{Step 1. Construction of the reference operator.}
		Let us consider the  operator
		$$
		-\tilde{A} := \begin{pmatrix}
			\Delta & 0 \\
			0 & \Delta
		\end{pmatrix} \colon H^{s+2, q}(\mathbb{T}^d) \times H^{s_v+2, q_v}(\mathbb{T}^d) \to 
		H^{s, q}(\mathbb{T}^d) \times H^{s_v, q_v}(\mathbb{T}^d).
		$$
		Since 
		\begin{equation}\label{eqn323}-\Delta \colon H^{s+2, q}(\mathbb{T}^d)  \to H^{s, q}(\mathbb{T}^d) \end{equation} 
		lies in $\mathcal{SMR}_{p, \kappa}^{\bullet}$ and likewise 
		\begin{equation}\label{eqn324}-\Delta \colon H^{s_v+2, q_v}(\mathbb{T}^d)  \to H^{s_v, q_v}(\mathbb{T}^d) \end{equation} 
		by \cite[Lemma 5.2]{AV24}, we find that $\tilde{A}\in \mathcal{SMR}_{p, \kappa}^{\bullet} $ due to  the diagonal structure. 
		
		\vspace{2mm}
		\noindent
		\emph{Step 2. Verification of $A\in\mathcal{SMR}_{p,\kappa}$.}
		We aim to show the existence, uniqueness and estimates of solutions to \eqref{25.06.19.17.46}. The key observation is that for $U=(u,v)$, $f = (f_u, f_v)$, and $g = (g_u, g_v)$, the system attains the following triangular form:
		\begin{align}
			\mathrm{d}u - \Delta u\,\mathrm{d}t &= f_u\,\mathrm{d}t + g_u\,\mathrm{d}W, \label{eq: U1} \\
			\mathrm{d}v + ((\mathrm{Id}-\Delta)v - u)\,\mathrm{d}t &= f_v\,\mathrm{d}t + g_v\,\mathrm{d}W. \label{eq: U2}
		\end{align}
		Since \eqref{eqn323} lies in $ \mathcal{SMR}_{p, \kappa}^{\bullet}$, the first equation \eqref{eq: U1} admits a unique $L^p_{\kappa}$-strong solution $u$. 
		Moreover, stochastic maximal regularity  yields
		\begin{equation}
			\label{26.02.02.15.24}
			\begin{aligned}
				&\mathbb{E}\|u\|_{L^p((a, b), w_{\kappa}^a; H^{s+2, q}(\mathbb{T}^d))}^p\\
				&\leq C_T \Big( \mathbb{E}\|f_u\|_{L^p((a, b), w_{\kappa}^a ; H^{s, q}(\mathbb{T}^d))}^p + \mathbb{E}\|g_u\|_{L^p((a, b), w_{\kappa}^a; \gamma(\mathcal{H}; H^{s+1, q}(\mathbb{T}^d)))}^p \Big)\\ &
				\leq C_T \Big( \mathbb{E}\|f\|_{L^p((a, b), w_{\kappa}^a; X_0)}^p + \mathbb{E}\|g\|_{L^p((a, b), w_{\kappa}^a; \gamma(\mathcal{H}; X_{1/2}))}^p \Big).
			\end{aligned}
		\end{equation}
		
		To find the solution $v$ for the second equation \eqref{eq: U2}, we consider an auxiliary problem.
		First, we observe that due to \eqref{Eq_pc3_new} and \eqref{Eq_pc5_new}, we have the sharp Sobolev embedding $H^{s+2, q}(\mathbb{T}^d) \hookrightarrow H^{s_v, q_v}(\mathbb{T}^d)$. 
		Consequently, the solution $u$ to \eqref{eq: U1} satisfies
		\begin{equation}\label{eqn435}
			\mathbb{E}\|u\|_{L^p((a, b), w_{\kappa}^a; H^{s_v, q_v}(\mathbb{T}^d) )}^p \leq C \mathbb{E}\|u\|_{L^p((a, b), w_{\kappa}^a; H^{s+2, q}(\mathbb{T}^d) )}^p.
		\end{equation}
		This implies that the term $\mathrm{e}^{t}(f_v + u)$ 
		belongs to $L^p((a,b),w_{\kappa}^a;H^{s_v, q_v}(\mathbb{T}^d) ))$, and therefore serves as a well-defined forcing term.
		Since \eqref{eqn324} lies in $\mathcal{SMR}_{p,\kappa}^{\bullet}$, there exists a unique $L^p_{\kappa}$-strong solution  $\tilde{v} \in L^p((a, b), w_{\kappa}^a; H^{s_v+2, q_v}(\mathbb{T}^d) ))$ to the following equation:
		\begin{equation}\label{eq: U2 tilde}
			\mathrm{d}\tilde{v} - \Delta \tilde{v}\,\mathrm{d}t = \mathrm{e}^{t}(f_v + u)\,\mathrm{d}t + \mathrm{e}^{t}g_v\,\mathrm{d}W, \quad \tilde{v}(a) = 0.
		\end{equation}
		Moreover, $\tilde{v}$ satisfies the stochastic maximal regularity estimate
		\begin{align*}
			&\mathbb{E}\|\tilde{v}\|_{L^p((a, b), w_{\kappa}^a; H^{s_v+2, q_v}(\mathbb{T}^d) ))}^p \\&
			\leq C_T \left( \mathbb{E}\|\mathrm{e}^{t}(f_v + u)\|_{L^p((a, b), w_{\kappa}^a; H^{s_v, q_v}(\mathbb{T}^d) ))}^p + \mathbb{E}\|\mathrm{e}^{t}g_v\|_{L^p((a, b), w_{\kappa}^a; \gamma(\mathcal{H}; H^{s_v+1, q_v}(\mathbb{T}^d)))}^p \right) \\&
			\leq C_T \left( \mathbb{E}\|f_v\|_{L^p((a, b), w_{\kappa}^a;H^{s_v, q_v}(\mathbb{T}^d) ))}^p + \mathbb{E}\|u\|_{L^p((a, b), w_{\kappa}^a; H^{s_v, q_v}(\mathbb{T}^d) ))}^p \right)
			\\& \quad + C_T\mathbb{E}\|g_v\|_{L^p((a, b), w_{\kappa}^a; \gamma(\mathcal{H}; H^{s_v+1, q_v}(\mathbb{T}^d) )))}^p .
		\end{align*}
		Substituting the estimates on $u$ from \eqref{26.02.02.15.24} and \eqref{eqn435} into the above inequality, we obtain
		\begin{align*}
			\mathbb{E}\|\tilde{v}\|_{L^p((a, b), w_{\kappa}^a;H^{s_v+2, q_v}(\mathbb{T}^d))}^p
			\leq C_T\left(  \mathbb{E}\|f\|_{L^p((a, b), w_{\kappa}^a; X_0)}^p + \mathbb{E}\|g\|_{L^p((a, b), w_{\kappa}^a; \gamma(\mathcal{H}; X_{1/2}))}^p \right).
		\end{align*}
		We now define $v(t) := \mathrm{e}^{-t}\tilde{v}(t)$. 
		By  It\^o's produce rule,  $v$ is indeed an $L^p_{\kappa}$-strong solution to the original equation \eqref{eq: U2} and satisfies the required estimates. For uniqueness, one may start from an $L^p_\kappa$-strong solution $v$ to \eqref{eq: U2} and obtain the unique $L^p_\kappa$-strong solution to \eqref{eq: U2 tilde} by the inverse relation $\tilde{v}(t) = e^t v(t)$. 
		Combining the results for $u$ and $v$, we conclude that $A \in \mathcal{SMR}_{p, \kappa}$.
	\end{proof}

	\subsection{Estimates on the deterministic nonlinearity}\label{sec: Estimates on the deterministic nonlinearity}
	We estimate the deterministic non-linearity $F$ defined in \eqref{Eq_F1} in the space $X_0$. This amounts to the following lemma.
	\begin{lem}\label{lem: bound for F1}
		Assume  \eqref{Eq_standard} and  \eqref{Eq_pc1_new}--\eqref{Eq_pc5_new}, then we have the bounds
		\begin{align}\begin{split}\label{Eq101}&\|
				\mathrm{div}(u\nabla v) -\mathrm{div}(\bar{u} \nabla \bar{v}) \|_{H^{s,q} (\bT^d) } \\& \lesssim  \| v\|_{H^{{\gamma_v },q_v}(\bT^d)} \|u- \bar{u}\|_{H^{\gamma,q}(\bT^d)} +\|\bar{u}\|_{H^{\gamma,q}(\bT^d)} \|v- \bar{v}\|_{H^{\gamma_v,q_v}(\bT^d)},
			\end{split}
		\end{align}
		and 
		\begin{align}\begin{split}\label{Eq1011}&\|
				\mathrm{div}(u\nabla v)  \|_{H^{s,q} (\bT^d) } \,\lesssim \,  \|u\|_{H^{\gamma,q}(\bT^d)}\| v\|_{H^{\gamma_v,q_v}(\bT^d)},
			\end{split}
		\end{align}
		for all $(u,v),(\bar{u},\bar{v})\in H^{\gamma,q}(\mathbb{T}^d)\times H^{\gamma_v,q_v}(\mathbb{T}^d)$, where $ \gamma= s+2-(1+\kappa)/p$, and $\gamma_v$ is defined analogously with $s$ replaced by $s_v$.
	\end{lem}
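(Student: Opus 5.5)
The bilinear structure means \eqref{Eq101} follows from \eqref{Eq1011} by writing
\[
\mathrm{div}(u\nabla v)-\mathrm{div}(\bar u\nabla\bar v)=\mathrm{div}\bigl((u-\bar u)\nabla v\bigr)+\mathrm{div}\bigl(\bar u\nabla(v-\bar v)\bigr).
\]
So it suffices to prove the product estimate \eqref{Eq1011}. First, since $s\le -1$, the operator $\mathrm{div}\colon H^{s+1,q}(\bT^d;\bR^d)\to H^{s,q}(\bT^d)$ is bounded. This reduces the task to
\[
\|u\nabla v\|_{H^{s+1,q}}\lesssim \|u\|_{H^{\gamma,q}}\|\nabla v\|_{H^{\gamma_v-1,q_v}}.
\]
Here $s+1\le 0$ is a nonpositive regularity, $\gamma=s+2-(1+\kappa)/p$, and $\gamma_v-1=s_v+1-(1+\kappa)/p$.

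Next, I would record the scaling bookkeeping using \eqref{Eq_pc2_new}--\eqref{Eq_pc3_new}. From \eqref{Eq_pc2_new}, $\gamma_v-1 = d/q_v-1-(1+\kappa)/p+\ldots$; more precisely,
\[
\gamma_v - d/q_v = (1+\kappa)/p - 0\cdot\ldots,
\]
so that
\[
\gamma_v-\frac{d}{q_v}=\frac{1+\kappa}{p}-\Bigl(2-\frac{2(1+\kappa)}{p}\Bigr)+2-\frac{1+\kappa}{p}=\frac{1+\kappa}{p}.
\]
Likewise, by \eqref{e:1}, $\gamma-d/q=(1+\kappa)/p-2+\ldots$. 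The important identity is
\[
\Bigl(\gamma-\frac dq\Bigr)+\Bigl(\gamma_v-1-\frac d{q_v}\Bigr)=(s+1)-\frac dq+\ldots,
\]
that is, the regularity/integrability count of the product matches $H^{s+1,q}$ at the critical level, with total "gain" $2(1+\kappa)/p-\ldots$ absorbing the $-2(1+\kappa)/p$ loss. I would verify this carefully; it is where \eqref{Eq_pc2_new}--\eqref{Eq_pc3_new} enter.

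For the product estimate itself, I would use duality together with a standard fractional Leibniz/paraproduct estimate on $\bT^d$, transferred from $\bR^d$. Since $s+1\le 0$, test against $\phi\in H^{-s-1,q'}$ and estimate $\int u\,\nabla v\,\phi$. The case split follows the sign of $\gamma_v-1$: \eqref{Eq_pc4_new} and \eqref{Eq_pc5_new} guarantee that either $\nabla v$ has nonnegative smoothness or $u$ does, while $\gamma>0$ can fail since $\gamma$ may be negative. The cleanest route is the general product law
\[
\|fg\|_{H^{\sigma,r}}\lesssim\|f\|_{H^{\sigma_1,r_1}}\|g\|_{H^{\sigma_2,r_2}}.
\]
This holds when $\sigma_1+\sigma_2>0$, $\sigma\le\min(\sigma_1,\sigma_2)$, and $\sigma_1+\sigma_2-\sigma\ge d(1/r_1+1/r_2-1/r)$, possibly with $\sigma<0$ on one side via Sobolev embeddings. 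I would apply it with $f=u$, $g=\nabla v$. Positivity of $\sigma_1+\sigma_2=\gamma+\gamma_v-1$ should follow from $s+d/q\ge0$, $s_v+d/q_v\ge 0$ and $(1+\kappa)/p>0$, via the identities in Remark~\ref{rmk: examples of choices of parameters}. The requirement $s+1\le\min(\gamma,\gamma_v-1)$ uses $(1+\kappa)/p<1/2$ and \eqref{Eq_pc5_new}. Before applying the product law, I would Sobolev-embed each factor into a common $H^{\cdot,\tilde q}$ with smoothness lowered as needed.

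The main obstacle is checking the exponent conditions at the endpoint. The criticality means the Sobolev count is an equality, so I must ensure no endpoint embedding such as $H^{d/r,r}\hookrightarrow L^\infty$ is used, and that the conditions $q\ge2$, the restriction $s_v\in(-1,1]$, and the strict positivity $(1+\kappa)/p>0$ give the needed strict inequalities.
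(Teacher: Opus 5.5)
Your reduction (split by bilinearity, then use boundedness of $\mathrm{div}\colon H^{s+1,q}(\bT^d;\bR^d)\to H^{s,q}(\bT^d)$) is the same as the paper's. The product estimate, however, is the entire content of the lemma, and you do not establish it; the tool you propose fails as stated.

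First, you misread the parameter regime. By \eqref{e:1} and Remark~\ref{rmk: examples of choices of parameters}(i) one has exactly $\gamma=\tfrac12(s+d/q)$ and $\gamma_v-1=\tfrac12(s_v+d/q_v)$. Hence \eqref{Eq_pc4_new} forces $\gamma\ge 0$ and $\gamma_v-1\ge 0$ always. So $\gamma$ is never negative, and no case split, duality or paraproduct with negative smoothness is needed.

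Second, the product law you quote is false in the equality case of the scaling condition, which is exactly the case you need. For $d\ge 2$, the choice $\sigma=\sigma_1=\sigma_2=1$, $r=r_1=r_2=d$ satisfies all your conditions, yet $H^{1,d}(\bT^d)$ is not an algebra. One must additionally require $\sigma_i<d/r_i$.

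Third, the hypothesis $\sigma_1+\sigma_2>0$ does not follow from the assumptions. The sum $\gamma+\gamma_v-1=\tfrac12(s+d/q)+\tfrac12(s_v+d/q_v)$ contains no $(1+\kappa)/p$ term. For $d=4$, $p=4$, $\kappa=0$, $q=16/7$, $q_v=16/3$, $s=-7/4$, $s_v=-3/4$, all of \eqref{Eq_standard} and \eqref{Eq_pc1_new}--\eqref{Eq_pc5_new} hold, while $\gamma=\gamma_v-1=0$.

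Your scaling bookkeeping is also left with placeholders, and the displayed expression for $\gamma_v-d/q_v$ evaluates to $2(1+\kappa)/p$ as written. The correct statement is the exact identity $(\gamma-d/q)+(\gamma_v-1-d/q_v)=s+1-d/q=2(1+\kappa)/p-3$, with no slack.

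The missing idea is that both smoothness indices are nonnegative and strictly below the $L^\infty$ threshold, since $\gamma-d/q=(1+\kappa)/p-2<0$ and $\gamma_v-1-d/q_v=(1+\kappa)/p-1<0$. The critical product estimate is then just H\"older plus non-endpoint Sobolev embeddings, which is the paper's argument:
\begin{enumerate}
\item Since $s+1-d/q\in(-d,-d/q]$, there is $R\in(1,q]$ with $-d/R=s+1-d/q$ and $L^R\hookrightarrow H^{s+1,q}$.
\item With $r_v=d/(1-(1+\kappa)/p)$ and $1/r=1/R-1/r_v$, H\"older gives $\|u\nabla v\|_{L^R}\le\|u\|_{L^r}\|\nabla v\|_{L^{r_v}}$.
\item The embeddings $H^{\gamma,q}\hookrightarrow L^r$ and $H^{\gamma_v,q_v}\hookrightarrow H^{1,r_v}$ hold because the differential dimensions match exactly, $\gamma\ge0$, $\gamma_v\ge1$, and $r,r_v<\infty$.
\end{enumerate}
This also covers the corner case $\gamma=\gamma_v-1=0$, where the argument reduces to $L^q\cdot L^{q_v}\subset L^R$.
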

	\begin{proof}
		From the triangle inequality, we obtain that 
		\begin{align}\begin{split}\label{Eq100}
				&
				\| \mathrm{div}(u\nabla v) -\mathrm{div}(\bar{u} \nabla \bar{v}) \|_{H^{s,q} (\bT^d) } \\&
				\lesssim 
				\|u\nabla v-\bar{u}\nabla{\bar{v}}\|_{H^{1+s,q}(\mathbb{T}^d;\bR^d)}\\& \le  \|(u-\bar{u})\nabla v\|_{H^{1+s,q}(\mathbb{T}^d;\bR^d)}+\|\bar{u}\nabla( v-\bar{v})\|_{H^{1+s,q}(\mathbb{T}^d;\bR^d)}.
			\end{split}
		\end{align}
		Regarding the Sobolev index of the latter space, we observe that since $s\le -1$ by \eqref{Eq_pc1_new}, it follows that 
		$$s+1 -d/q \le -d/q.$$
		At the same time, the fact that $s \ge -d/q$ by \eqref{Eq_pc4_new} implies 
		$$s+1 -d/q \ge 1-2d/q >-d,$$
		since also $q\ge 2$ by \eqref{Eq_standard}.
		All in all, we have that the Sobolev index of $H^{1+s,q}(\mathbb{T}^d;\bR^d)$ satisfies
		\[
		s+1 - d/q \,\in \, (-d,-d/q], 
		\]
		and therefore there exists $R \in (1,q]$, such that the embedding $L^R(\bT^d)\hookrightarrow H^{1+s,q}(\mathbb{T}^d) $
		is sharp. The latter is characterized by 
		\begin{equation}\label{Eq_R}
			-d/R =s+1 -d/q.
		\end{equation}
		With this choice, we can process \eqref{Eq100} to the estimate 
		\begin{equation}
			\|\mathrm{div}(u\nabla v) -\mathrm{div}(\bar{u} \nabla \bar{v})  \|_{H^{s,q} (\bT^d;\bR^d) } \lesssim\|(u-\bar{u})\nabla v\|_{L^R(\mathbb{T}^d;\bR^d)}+\|\bar{u}\nabla( v-\bar{v})\|_{L^R(\mathbb{T}^d;\bR^d)}.
		\end{equation}
		To show the desired estimate \eqref{Eq101}, it suffices to bound the former of the two terms on the right hand side appropriately. Indeed, while this results in the former term on the right hand side of \eqref{Eq101}, a repetition of the following chain of arguments bounds the second term by the second term on the right hand side of \eqref{Eq101}.

		To do so,  we set $r_v  =  \frac{d}{1-(1+\kappa)/p} \in (d, 2d]$, cf.\ \eqref{Eq_standard},  and define $ r  $  via $1/R = 1/r_v +1/r$. Using \eqref{Eq_R}, this can be expressed via 
		\begin{equation}\label{Eq1}
			-d/r = s+2 -(1+\kappa)/p -d/q ,
		\end{equation}
		and to ensure that $r\in (R,\infty)$, we convince ourselves that $R<r_v$. Indeed, the latter is equivalent to $-d/R < -d/r_v$, which by \eqref{Eq_R} amounts to 
		\[
		s+1 -d/q < (1+\kappa)/p \,-\, 1\qquad \text{or equivalently}\qquad  s+2 - (1+\kappa)/p-d/q <0.
		\]
		But since \eqref{Eq_standard} and \eqref{Eq_pc2_new}--\eqref{Eq_pc3_new} imply
		\[
		s+2 - (1+\kappa)/p-d/q =(1+\kappa)/p - 2 < 0,
		\]
		we obtain that $r\in (R,\infty)$. 
		Thereby, with this choice, we have by H\"older's inequality
		\[
		\|(u-\bar{u})\nabla v\|_{L^R(\mathbb{T}^d;\bR^d)} \le  \|\nabla v\|_{ L^{r_v}(\mathbb{T}^d;\bR^d)}\|u - \bar{u}\|_{L^r (\mathbb{T}^d)} \lesssim  \| v\|_{ H^{1, r_v}(\mathbb{T}^d)} \|u - \bar{u}\|_{L^r (\mathbb{T}^d)}.
		\]
		
		To prove the claim, we use the Sobolev embedding to estimate the above terms. To this end, we observe that the Sobolev index of $L^r(\bT^d)$ and $H^{\gamma,q}(\bT^d)$ coincide, due to \eqref{Eq1}. Similarly, since 
		\begin{equation}\label{Eq11}
			1  - d/r_v = (1+\kappa)/p=  s_v + 2 - (1+\kappa)/p -d/q_v ,
		\end{equation}
		by \eqref{Eq_pc2_new}, also the Sobolev indices of $H^{1,r_v}(\bT^d)$ and $H^{\gamma_v,q_v}(\bT^d)$ are equal. Consequently, the desired estimate
		\begin{equation*}
			\| v\|_{H^{1, r_v}(\mathbb{T}^d)} \|u - \bar{u}\|_{L^r (\mathbb{T}^d)} \,\lesssim\,
			\| v\|_{H^{\gamma_v,q_v}(\bT^d)} \|u- \bar{u}\|_{H^{\gamma,q}(\bT^d)} ,
		\end{equation*}
		follows as soon as we convince ourselves that 
		\begin{equation}\label{Eq99}s+2 -(1+\kappa)/p \,\ge\, 0 \qquad \text{and}\qquad 
			s_v + 2 - (1+\kappa)/p \ge 1.
		\end{equation}
		For the former, we use that
		\[
		s+4 - 2(1+\kappa)/p -d/q = 0,
		\]
		which follows from \eqref{Eq_pc2_new}--\eqref{Eq_pc3_new}, and that therefore
		\[
		s+2 -(1+\kappa)/p  = (s+d/q)/2.
		\]
		That the latter is non-negative is imposed in \eqref{Eq_pc4_new}. Similarly, starting from \eqref{Eq_pc2_new}, we deduce that 
		\[
		s_v + 1 - (1+\kappa)/p  = (s_v+d/q_v)/2,
		\]
		so that the latter inequality of \eqref{Eq99} is ensured by \eqref{Eq_pc4_new} too.
		This completes the proof of the first claim \eqref{Eq101}.
		The second claim \eqref{Eq1011}
		follows  by setting $(\bar{u},\bar{v}) =(0,0)$ in \eqref{Eq101} and using that then $\mathrm{div}(\bar{u} \nabla \bar{v})=0$.
	\end{proof}
	\subsection{Estimates on the noise term}\label{sec: Estimates on the noise term}
	In this section, we prove estimates on the non-linear noise term $G$ in the space $X_{1/2}$. We begin with the following estimate on $g^1$.
	\begin{lem}\label{lem: bound for g1}
		Assume \eqref{Eq_standard} and  \eqref{Eq_pc1_new}--\eqref{Eq_pc5_new} and that $g^1$ satisfies Assumption \ref{Ass_noise}~\eqref{Ai}.
		Then we have the bounds
		\begin{align}\begin{split}\label{Eq1010}&\|
				g^1(u,v,\nabla v ) \,-\, g^1(\bar{u},\bar{v},\nabla \bar{v}) \|_{H^{1+s,q} (\bT^d;\ell^2) } \lesssim_{{g^1}} 
				\|(u-\bar{u},v- \bar{v})\|_{H^{\gamma,q}(\bT^d)\times H^{\gamma_v,q_v}(\bT^d)},
			\end{split}
		\end{align}
		and 
		\begin{align}\begin{split}\label{Eq10101}\|
				g^1(u,v,\nabla v )  \|_{H^{1+s,q} (\bT^d;\ell^2) } \,&\lesssim_{g^1} \, 1 + \|{(u,v)}\|_{{H^{\gamma,q}(\bT^d)\times H^{\gamma_v,q_v}(\bT^d)}} ,
			\end{split}
		\end{align}
		for all $(u,v),(\bar{u},\bar{v})\in H^{\gamma,q}(\mathbb{T}^d)\times H^{\gamma_v,q_v}(\mathbb{T}^d)$, where $ \gamma= s+2-(1+\kappa)/p$, and $\gamma_v$ is defined analogously with $s$ replaced by $s_v$.
	\end{lem}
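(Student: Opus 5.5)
Since $1+s\le 0$ by \eqref{Eq_pc1_new}, we can trade the negative regularity of $H^{1+s,q}$ for integrability. As in the proof of Lemma \ref{lem: bound for F1}, the Sobolev index of $H^{1+s,q}(\bT^d)$ satisfies $s+1-d/q\in(-d,-d/q]$. Hence there is $R\in(1,q]$ with $-d/R=s+1-d/q$, and the sharp embedding $L^R(\bT^d;\ell^2)\hookrightarrow H^{1+s,q}(\bT^d;\ell^2)$ holds. The $\ell^2$-valued version holds since $\ell^2$ is a Hilbert space, so the scalar dual Sobolev embedding extends. Thus it suffices to bound
\[
\|g^1(u,v,\nabla v)-g^1(\bar u,\bar v,\nabla\bar v)\|_{L^R(\bT^d;\ell^2)}\lesssim \|u-\bar u\|_{L^R}+\|v-\bar v\|_{L^R}+\|\nabla(v-\bar v)\|_{L^R}.
\]
This follows pointwise from the Lipschitz bound \eqref{eqB}. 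The growth bound \eqref{Eq10101} follows in the same way from $\|g^1(z)\|_{\ell^2}\le \|g^1(0)\|_{\ell^2}+C|z|$, using that $L^R(\bT^d)$ contains constants on the torus.

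It then remains to embed $H^{\gamma,q}\hookrightarrow L^R$ and $H^{\gamma_v,q_v}\hookrightarrow H^{1,R}$, which controls both $v$ and $\nabla v$ in $L^R$. For $u$: by \eqref{Eq1}, $H^{\gamma,q}\hookrightarrow L^r$ with $r>R$ from the previous proof, and on the torus $L^r\hookrightarrow L^R$. Here $\gamma\ge0$ by \eqref{Eq99}. For $v$: by \eqref{Eq11} and \eqref{Eq99}, $H^{\gamma_v,q_v}\hookrightarrow H^{1,r_v}$, and $r_v>R$ was shown in the proof of Lemma \ref{lem: bound for F1}, so $H^{1,r_v}\hookrightarrow H^{1,R}$ on the bounded domain $\bT^d$.

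The main points to check are the sharp dual embedding into $H^{1+s,q}$ with $\ell^2$-values, and the index bookkeeping $R<r$, $R<r_v$. Both have essentially been done in Lemma \ref{lem: bound for F1}, so the proof should reduce to citing those computations. The only subtlety is the edge case $s=-1$, where $R=q$ and the embedding is the identity $L^q=H^{0,q}$. This case causes no trouble.
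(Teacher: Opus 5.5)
Your proposal is correct and is essentially the paper's proof. It reduces to $L^R(\bT^d;\ell^2)$ via the sharp embedding with $R$ from \eqref{Eq_R}, applies the Lipschitz bound \eqref{eqB} pointwise, and concludes with the embeddings $H^{\gamma,q}(\bT^d)\hookrightarrow L^r(\bT^d)$ and $H^{\gamma_v,q_v}(\bT^d)\hookrightarrow H^{1,r_v}(\bT^d)$ already verified in Lemma \ref{lem: bound for F1}, with the growth bound obtained by comparing against the constant $g^1(0,0,0)$.
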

	\begin{proof}
		As a first step towards \eqref{Eq1010}, we observe that as a consequence of the Lipschitz condition \eqref{eqB} and the vector-valued Sobolev embedding 
		\begin{equation*}
			L^R(\bT^d;\ell^2)\hookrightarrow H^{1+s,q}(\mathbb{T}^d; \ell^2) 
		\end{equation*}
		with exponents as in \eqref{Eq_R}, we have
		\begin{align*}\begin{split}\label{eqC}
				&\|
				g^1(u,v,\nabla v ) - g^1(\bar{u},\bar{v},\nabla \bar{v}) \|_{H^{1+s,q} (\bT^d;\ell^2) }
				\\ & \lesssim 
				\|
				g^1(u,v,\nabla v ) - g^1(\bar{u},\bar{v},\nabla \bar{v}) \|_{L^R (\bT^d;\ell^2) }
				\\& \lesssim_{g^1} 
				\|
				|u -\bar{u}| + |v-\bar{v}| + |\nabla v -\nabla \bar{v} | \|_{L^R (\bT^d) }.
			\end{split}
		\end{align*}
		Then, by the triangle inequality, it suffices to bound the summands of 
		\begin{align*}&
			\|u -\bar{u}\|_{L^R (\bT^d) }+ \|v -\bar{v}\|_{L^R (\bT^d) } + \|\nabla v - \nabla \bar{v}\|_{L^R (\bT^d) }
			\\& \lesssim  \|u -\bar{u}\|_{L^r (\bT^d) } + \|v -\bar{v}\|_{H^{1,{r_v}} (\bT^d) } ,
		\end{align*}
		where we take $r$ and $r_v$ as in the proof of Lemma \ref{lem: bound for F1} above. But then the Sobolev embeddings 
		\begin{align*}
			H^{\gamma,q}(\bT^d)
			\, \hookrightarrow \, & L^r (\bT^d),
			\qquad
			H^{\gamma_v,q_v}(\bT^d) 
			\,\hookrightarrow\   H^{1,{r_v}}(\bT^d),
		\end{align*}
		verified in \eqref{Eq1}, \eqref{Eq11} and \eqref{Eq99}, yield the first estimate \eqref{Eq1010}.
		The growth bound \eqref{Eq10101} is then a consequence of the simple estimate
		\[
		\|     g^1(u,v,\nabla v )  \|_{H^{1+s,q}  (\bT^d;\ell^2) } \lesssim 
		\|     g^1(u,v,\nabla v ) - g^1(0,0,0)  \|_{H^{1+s,q}  (\bT^d;\ell^2) } 
		+ \|g^1(0,0,0)\|_{\ell^2},
		\]
		and \eqref{Eq1010}.
	\end{proof}

	The estimate for $g^2$ is split up into the cases that $s_v\in (-1,0]$ and the case that $s_v\in(0,1]$.
	We observe that estimating $g^2$ in the less regular space when $s_v\in (-1,0]$ provides a better bound compared to estimating $g^2$ in the higher regularity space corresponding to $s_v\in(0,1]$. This is essentially due to additional products arising from repeated differentiation of nonlinearities.
	\begin{lem}\label{lemma: estimate for g^2 if sv is in -1 0}
		Assume \eqref{Eq_standard} and  \eqref{Eq_pc1_new}--\eqref{Eq_pc5_new} and that $g^2$ satisfies Assumption \ref{Ass_noise}~\eqref{Aii}.
		If $s_v \in (-1,0]$, then 
		\begin{equation} \label{eq: desired bound for g2 in the case sv is in 0,1}\|g^2(v) - g^2(\bar v)\|_{H^{1+s_v,q_v}(\mathbb{T}^d;\ell^2)} \lesssim_{g^2} \left(1+\|v\|_{H^{\gamma_v,q_v}(\mathbb{T}^d)}+\|\bar{v}\|_{H^{\gamma_v,q_v}(\bT^d)}\right)\|v-\bar{v}\|_{H^{\gamma_v,q_v}(\bT^d)},
		\end{equation}
		and 
		\begin{equation}
			\label{eq: desired bound for g2 growth}
			\|g^2(v)\|_{H^{1+s_v,q_v}(\mathbb{T}^d;\ell^2)} \lesssim_{g^2} \left(1+\|v\|_{H^{\gamma_v,q_v}(\mathbb{T}^d)}\right)\|v\|_{H^{\gamma_v,q_v}(\bT^d)},
		\end{equation}
		for all $v,\bar{v}\in H^{\gamma_v, q_v}(\mathbb{T}^d)$, where $\gamma_v=s_v+2 -\frac{1+\kappa}{p}$.
	\end{lem}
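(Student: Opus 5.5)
The plan is to reduce everything to first-order Sobolev norms. Since $s_v\in(-1,0]$, the target regularity $1+s_v$ lies in $(0,1]$, so the $H^{1+s_v,q_v}$-norm is controlled by a $W^{1,\rho}$-norm via a sharp Sobolev embedding. Only one derivative of $g^2(v)$ is needed, so only $(g^2)'$ and $(g^2)''$ enter, and both are bounded by Assumption \ref{Ass_noise}~\eqref{Aii}. Concretely, I would define $\rho$ by
\[
1-d/\rho \,=\, 1+s_v-d/q_v, \qquad\text{i.e.}\qquad d/\rho = d/q_v - s_v .
\]
Since $s_v\le 0$ we have $\rho\le q_v$. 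Since $d/q_v\le d/2$ and $-s_v<1\le d/2$, we have $\rho>1$. Hence the vector-valued embedding $H^{1,\rho}(\bT^d;\ell^2)\hookrightarrow H^{1+s_v,q_v}(\bT^d;\ell^2)$ holds, and it suffices to bound $g^2(v)-g^2(\bar v)$ in $L^\rho(\bT^d;\ell^2)$ and its gradient in $L^\rho(\bT^d;\ell^2\otimes\bR^d)$.

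Next I would relate $\rho$ to the exponent $r_v=d/(1-(1+\kappa)/p)$ from the proof of Lemma \ref{lem: bound for F1}. Using \eqref{Eq_pc2_new}, that is $d/q_v=s_v+2-2(1+\kappa)/p$, one computes
\[
d/\rho-d/r_v \,=\, 1-(1+\kappa)/p \,=\, d/r_v, \qquad\text{so}\qquad 1/\rho = 2/r_v .
\]
This is exactly the Hölder exponent needed for a product of two $L^{r_v}$ functions. Moreover, by \eqref{Eq11} and \eqref{Eq99} we already have $H^{\gamma_v,q_v}(\bT^d)\hookrightarrow H^{1,r_v}(\bT^d)\hookrightarrow L^{r_v}(\bT^d)$.

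With these exponents, the estimate is a chain-rule computation. For the zeroth-order part, $\|g^2(v)-g^2(\bar v)\|_{\ell^2}\le \|(g^2)'\|_\infty|v-\bar v|$, and since $\rho\le r_v$ on the torus,
\[
\|g^2(v)-g^2(\bar v)\|_{L^\rho}\lesssim \|v-\bar v\|_{L^{r_v}}.
\]
For the gradient I would write
\[
\nabla\big(g^2(v)-g^2(\bar v)\big)=(g^2)'(v)\nabla(v-\bar v)+\big((g^2)'(v)-(g^2)'(\bar v)\big)\nabla\bar v .
\]
The chain rule is legitimate because $g^2\in C^1$ with bounded derivative and $v\in W^{1,r_v}$. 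The first term is bounded by $\|(g^2)'\|_\infty\|\nabla(v-\bar v)\|_{L^{r_v}}$. The second term is pointwise at most $\|(g^2)''\|_\infty|v-\bar v||\nabla\bar v|$. By Hölder with $1/\rho=1/r_v+1/r_v$, it is therefore bounded by $\|v-\bar v\|_{L^{r_v}}\|\nabla\bar v\|_{L^{r_v}}$. Applying the embedding $H^{\gamma_v,q_v}\hookrightarrow H^{1,r_v}$ to every factor gives \eqref{eq: desired bound for g2 in the case sv is in 0,1}, with the factor $(1+\|\bar v\|_{H^{\gamma_v,q_v}})$, which is dominated by the stated prefactor.

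For the growth bound \eqref{eq: desired bound for g2 growth}, I would set $\bar v=0$. The difference bound then controls $g^2(v)-g^2(0)$, and the remaining piece is the constant $g^2(0)$, whose $H^{1+s_v,q_v}$-norm is $\|g^2(0)\|_{\ell^2}$. As stated, with no additive constant, the bound therefore uses $g^2(0)=0$, as in \eqref{assumption_for_global}. Without that assumption one simply adds the constant $\|g^2(0)\|_{\ell^2}$ to the right-hand side.

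The main obstacle I expect is the exponent bookkeeping: checking that $\rho\in(1,q_v]$, so that the Sobolev embedding is admissible, and that $1/\rho=2/r_v$ follows exactly from the criticality condition \eqref{Eq_pc2_new}. This is where $s_v\le 0$ and $d\ge3$ are used. Once those identities hold, the rest is Hölder's inequality and the embeddings already established in Lemma \ref{lem: bound for F1}.
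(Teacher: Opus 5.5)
Your proof is correct, but it takes a different route from the paper's.

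\textbf{The paper's route.} The paper writes $g^2(v)-g^2(\bar v)=(v-\bar v)\int_0^1(g^2)'(v_\theta)\,\mathrm{d}\theta$ and applies the fractional product estimate of \cite[Proposition 4.1]{AV24} directly in $H^{1+s_v,q_v}(\bT^d;\ell^2)$, putting an $L^\infty$-norm on one factor in each term. It keeps the integrability $q_v$ throughout. It bounds the $H^{1+s_v,q_v}$-norm of $(g^2)'(v_\theta)$ by its $H^{1,q_v}$-norm, using only $1+s_v\le 1$, and applies the chain rule there. It closes with $H^{\gamma_v,q_v}\hookrightarrow L^\infty$ and $\gamma_v\ge 1$.

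\textbf{Your route.} You trade fractional smoothness for integrability through the sharp embedding $H^{1,\rho}\hookrightarrow H^{1+s_v,q_v}$ with $\rho=r_v/2$. After that you only need the classical chain rule, H\"older's inequality, and the embedding $H^{\gamma_v,q_v}\hookrightarrow H^{1,r_v}$ already established for Lemma \ref{lem: bound for F1}. The quadratic term then has exactly the $L^{r_v}\times L^{r_v}$ structure of the chemotaxis nonlinearity. Your exponent bookkeeping checks out: $\rho\le q_v$ is equivalent to $s_v\le 0$, and $1/\rho=2/r_v$ follows from \eqref{Eq_pc2_new}.

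\textbf{What each buys.} Your argument is more elementary, since it needs neither a paraproduct estimate nor an $L^\infty$ embedding. It also gives the slightly better prefactor $1+\|\bar v\|_{H^{\gamma_v,q_v}}$. The paper's fixed-integrability argument is the one that carries over to Lemma \ref{lemma: estimate for g^2 if sv is in 0 1}, via interpolation between $H^{1,q_v}$ and $H^{2,q_v}$. There $1+s_v>1$, so no reduction to first-order norms is possible.

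\textbf{The growth bound.} Your caveat on \eqref{eq: desired bound for g2 growth} is justified. As stated, without an additive constant, it fails at $v=0$ unless $g^2(0)=0$. The paper's own final step produces the extra term $\|g^2(0)\|_{\ell^2}$. That term is harmless, because \eqref{eq: growth condition estimate for G in local wp} allows an additive constant, and it vanishes under \eqref{assumption_for_global}.
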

	\begin{proof}
		We start by proving the local Lipschitz estimate.
		Since $g^2$ is differentiable, we may evaluate
		\begin{equation*}
			g^2(v)-g^2(\bar{v})=(v-\bar{v})\int_0^1 (g^2)'(\bar{v}+\theta (v-\bar{v}))\,\mathrm{d}\theta.
		\end{equation*}
		Using this and the paraproduct estimate 
		\begin{equation*}
			\|fh\|_{H^{1+s_v,q_v}(\mathbb{T}^d;\ell^2)}\lesssim \|f\|_{H^{1+s_v,q_v}(\mathbb{T}^d)}\|h\|_{L^\infty(\mathbb{T}^d;\ell^2)}+\|f\|_{L^{\infty}(\mathbb{T}^d)}\|h\|_{H^{1+s_v,q_v}(\mathbb{T}^d;\ell^2)},
		\end{equation*}
		from \cite[Proposition 4.1]{AV24}, we obtain that
		\begin{align}\label{eq: equation for I1 I2}&
			\| g^2(v)-g^2(\bar{v}) \|_{H^{1+s_v,q_v}(\bT^d;\ell^2)}\nonumber\\
			& \lesssim \|v-\bar{v}\|_{H^{1+s_v,q_v}(\bT^d)} \biggl\| \int_0^1 (g^2)'(\bar{v}+\theta (v-\bar{v}))\,\mathrm{d}\theta\biggr\|_{L^\infty(\bT^d;\ell^2) } \nonumber\\
			&\quad +
			\|v-\bar{v}\|_{L^\infty(\bT^d)} \biggl\| \int_0^1 (g^2)'(\bar{v}+\theta (v-\bar{v}))\,\mathrm{d}\theta\biggr\|_{H^{1+s_v,q_v}(\bT^d;\ell^2) }\nonumber\\
			&=:I_1+I_2.
		\end{align}
		By Assumption \ref{Ass_noise}~\eqref{Aii}  and  the fact that $1+s_v\leq \gamma_v$, it follows that
		\begin{equation*}
			I_1\lesssim_{g^2} \|v-\bar{v}\|_{H^{\gamma_v,q_v}(\bT^d)},
		\end{equation*}
		which we observe appears on the right hand side of the desired estimate \eqref{eq: desired bound for g2 in the case sv is in 0,1}. 
		
		To complete the proof it remains to handle $I_2$ for which we first consider the integral term.
		For brevity, we denote  $v_{\theta}:=\bar{v}+\theta(v-\bar{v})$.
		Recalling that $s_v\in(-1,0]$, it follows 
		\begin{align*}\begin{split}
				\biggl\| \int_0^1 (g^2)'(\bar{v}+\theta (v-\bar{v}))\,\mathrm{d}\theta\biggr\|_{H^{1+s_v,q_v}(\bT^d;\ell^2)}\leq  &\int_0^1 \left\|(g^2)'(v_\theta)\right\|_{H^{1+s_v,q_v}(\bT^d;\ell^2) }\,\mathrm{d}\theta\\
				\lesssim &\int_0^1 \left\|(g^2)'(v_\theta)\right\|_{H^{1,q_v}(\bT^d;\ell^2) }\,\mathrm{d}\theta.
				\noindent
			\end{split}
		\end{align*}
		By the definition of the $H^{1,q_v}(\mathbb{T}^d;\ell^2)$-norm and Assumption \ref{Ass_noise}~\eqref{Aii}, the right hand side of the above can be controlled by
		\begin{align*}
			\begin{split}&
				\int_0^1 \left(\left\|(g^2)'(v_\theta)\right\|_{L^{q_v}(\bT^d;\ell^2) }+\left\|\nabla (g^2)'(v_\theta)\right\|_{L^{q_v}(\bT^d;\ell^2(\bR^d)) }\right)\,\mathrm{d}\theta
				\le  C_{g^2} + K_1,
			\end{split}
		\end{align*}
		with  $$K_1:=\int_0^1\left\|\nabla (g^2)'(v_\theta)\right\|_{L^{q_v}(\bT^d;\ell^2(\bR^d)) }\,\mathrm{d}\theta.$$
		The chain rule $\nabla (g^2)'(v_\theta) = (g^2)''(v_\theta ) \nabla v_\theta $ then gives 
		\begin{equation}\label{eqn44}
			\left\|\nabla (g^2)'(v_\theta) \right\|_{{L^{q_v}(\bT^d;\ell^2(\bR^d)) }}\,\lesssim_{{g^2}} \| v_\theta \|_{H^{1, q_v}(\bT^d)} \le \| v \|_{H^{1, q_v}(\bT^d)} + \| \bar{v} \|_{H^{1, q_v}(\bT^d)},
		\end{equation}
		and therefore 
		\begin{align*}
			\begin{split}
				K_1\lesssim_{{g^2} }  \| v \|_{H^{1, q_v}(\bT^d)} + \| \bar{v} \|_{H^{1, q_v}(\bT^d)}.
			\end{split}
		\end{align*}
		
		\vspace{2mm}
		\noindent
		Consequently, we showed that $I_2$ satisfies the bound
		\begin{equation*}
			I_2 \lesssim_{g^2}  \|v-\bar{v}\|_{L^\infty(\bT^d)}(1+ \|v\|_{H^{1,q_v}(\bT^d) }+\|\bar{v}\|_{H^{1,q_v}(\bT^d) }).
		\end{equation*}
		By \eqref{Eq_pc2_new}, the first term in the product on the right hand side can be further bounded by the Sobolev embedding
		\begin{equation*}
			H^{\gamma_v,q_v}(\bT^d) \hookrightarrow L^\infty(\mathbb{T}^d),
		\end{equation*}
		which yields
		\begin{equation*}
			\|v-\bar{v}\|_{L^\infty(\bT^d)}\leq \|v-\bar{v}\|_{H^{\gamma_v,q_v}(\bT^d)}.
		\end{equation*}
		Using once more that $1+s_v\leq \gamma_v$ for the remaining terms and combining with the estimate for $I_1$  gives the local Lipschitz estimate \eqref{eq: desired bound for g2 in the case sv is in 0,1}.
		
		Together with 
		\[
		\|
		g^2(v )  \|_{H^{1+s_v,q_v} (\bT^d;\ell^2) } \,\le\, \|
		g^2(v) -  g^2(0 ) \|_{H^{1+s_v,q_v} (\bT^d;\ell^2) } \,+\, \|g^2(0)\|_{\ell^2},
		\]
		the growth bound \eqref{eq: desired bound for g2 growth} follows as well.
	\end{proof}
	{
		We now prove an estimate for $g^2$ in the case that $s_v\in(0,1]$.
		\begin{lem}\label{lemma: estimate for g^2 if sv is in 0 1}
			Assume \eqref{Eq_standard} and  \eqref{Eq_pc1_new}--\eqref{Eq_pc5_new} and that $g^2$ satisfies Assumption \ref{Ass_noise}~\eqref{Aii}.
			If $s_v \in (0,1]$, then for every $\eta >0$
			\begin{align}\begin{split}\label{eqn_LL_g2_case2}
					&\|g^2(v) - g^2(\bar v)\|_{H^{1+s_v,q_v}(\mathbb{T}^d;\ell^2)} \\
					& \lesssim_{(\eta ,g^2)}\Big(1 + \|v\|_{H^{2,q_v}(\mathbb{T}^d)}+\|\bar{v}\|_{H^{2,q_v}(\bT^d)} \Big)  \|v - \bar v\|_{H^{\zeta_\eta,q_v}(\mathbb{T}^d)}
					\\&
					\quad + \left(\|v\|_{H^{1+\frac{d}{2q_v},q_v}(\mathbb{T}^d)}^{1+s_v}+\|\bar{v}\|_{H^{1+\frac{d}{2q_v},q_v}(\bT^d)}^{1+s_v}\right)\|v - \bar v\|_{H^{\zeta_\eta,q_v}(\mathbb{T}^d)},
				\end{split}
			\end{align}
			and 
			\begin{align}\begin{split}\label{eqn_GB_g2_case2}
					& \|g^2(v)\|_{H^{1+s_v,q_v}(\mathbb{T}^d;\ell^2)} \\
					& \lesssim_{(\eta  ,g^2)}\left(1 + \|v\|_{H^{2,q_v}(\mathbb{T}^d)}\right) \|v\|_{H^{\zeta_\eta,q_v}(\mathbb{T}^d)}
					+ \|v\|_{H^{1+\frac{d}{2q_v},q_v}(\mathbb{T}^d)}^{1+s_v}\|v\|_{H^{\zeta_\eta,q_v}(\mathbb{T}^d)},
				\end{split}
			\end{align}
			for all $v,\bar{v}\in H^{\max\{2, \zeta_\eta\}, q_v}(\mathbb{T}^d)$, where $\zeta_\eta = s_v+2-2\frac{1+\kappa}{p}+\eta$.
		\end{lem}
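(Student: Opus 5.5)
Since $1+s_v\in(1,2]$, we write the $H^{1+s_v,q_v}$-norm as $\|\cdot\|_{L^{q_v}}+\|\nabla\cdot\|_{H^{s_v,q_v}}$ and estimate the gradient of the difference. The chain rule gives
\[
\nabla\bigl(g^2(v)-g^2(\bar v)\bigr) = (g^2)'(v)\,\nabla(v-\bar v) + \bigl((g^2)'(v)-(g^2)'(\bar v)\bigr)\nabla\bar v ,
\]
and the $L^{q_v}$-part is handled trivially via $\|g^2(v)-g^2(\bar v)\|_{L^{q_v}}\lesssim\|v-\bar v\|_{L^{q_v}}$. Each of the two products is then estimated in $H^{s_v,q_v}$ with $s_v\in(0,1]$ by a Kato--Ponce/paraproduct estimate of the type \cite[Proposition 4.1]{AV24}:
\[
\|fh\|_{H^{s_v,q_v}}\lesssim \|f\|_{H^{s_v,q_v_1}}\|h\|_{L^{q_v_2}}+\|f\|_{L^{q_v_3}}\|h\|_{H^{s_v,q_v_4}} ,
\]
with suitable H\"older pairs. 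Here $q_v_1,\dots,q_v_4$ denote the auxiliary integrability exponents, and we allow the $L^\infty$ endpoint where convenient.

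For the first product, $(g^2)'(v)\nabla(v-\bar v)$, we put $\nabla(v-\bar v)$ in $H^{s_v,q_v}$ paired with $\|(g^2)'(v)\|_{L^\infty}\le C$. Then $H^{1+s_v,q_v}\hookleftarrow H^{\zeta_\eta,q_v}$, since by \eqref{Eq_pc2_new} one has $\zeta_\eta - (1+s_v) = 1-2(1+\kappa)/p+\eta>0$. The second half of the paraproduct needs $\|(g^2)'(v)\|_{H^{s_v,r}}$. We bound this through $H^{1,r}$ by $C(1+\|\nabla v\|_{L^r})$, paired with $\|\nabla(v-\bar v)\|_{L^{r'}}$. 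We choose the exponents via Sobolev so that $\|\nabla v\|_{L^r}\lesssim\|v\|_{H^{2,q_v}}$, and so that $\nabla(v-\bar v)\in L^{r'}$ is controlled by $H^{\zeta_\eta,q_v}$, using the Sobolev-index slack created by $\eta$ together with $\zeta_\eta-d/q_v=\eta+s_v-(1+\kappa)/p\cdot 2+2-d/q_v$, which equals $\eta$ by \eqref{Eq_pc2_new}. In particular $H^{\zeta_\eta,q_v}\hookrightarrow L^\infty$ strictly, which is what $\eta>0$ buys.

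For the second product we write $(g^2)'(v)-(g^2)'(\bar v)=(v-\bar v)\int_0^1(g^2)''(v_\theta)\,\d\theta$, using $(g^2)''\in C^1_b$. The $L^\infty$-parts give $\|v-\bar v\|_{L^\infty}\|\nabla\bar v\|_{H^{s_v,q_v}}\lesssim\|v-\bar v\|_{H^{\zeta_\eta,q_v}}\|\bar v\|_{H^{2,q_v}}$, because $1+s_v\le2$. The remaining term requires $\|(v-\bar v)\int(g^2)''(v_\theta)\|_{H^{s_v,r}}\|\nabla\bar v\|_{L^{r'}}$. We expand it once more by the paraproduct. The piece with $(v-\bar v)$ differentiated is again controlled by $\|v-\bar v\|_{H^{\zeta_\eta,q_v}}$. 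The piece $\|(g^2)''(v_\theta)\|_{H^{s_v,r}}$ is estimated by interpolation between $L^r$ (bounded) and $H^{1,r}$ (controlled by $\|\nabla v_\theta\|_{L^r}$), giving $\lesssim 1+\|\nabla v_\theta\|_{L^{r}}^{s_v}$.

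Choosing $r=r'=2q_v$ and using Gagliardo--Nirenberg/Sobolev in the form $H^{1+d/(2q_v),q_v}\hookrightarrow H^{1,2q_v}$, which holds since the Sobolev indices match, we obtain products of the form
\[
\|\nabla v_\theta\|_{L^{2q_v}}^{s_v}\,\|\nabla\bar v\|_{L^{2q_v}}\le \|v_\theta\|_{H^{1+d/(2q_v),q_v}}^{s_v}\,\|\bar v\|_{H^{1+d/(2q_v),q_v}} .
\]
Young's inequality then turns these into $\|v\|^{1+s_v}+\|\bar v\|^{1+s_v}$ in $H^{1+d/(2q_v),q_v}$, which explains the peculiar exponent in \eqref{eqn_LL_g2_case2}.

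The main obstacle will be this bookkeeping of exponents. We must check that every auxiliary embedding is admissible, i.e.\ that $H^{\zeta_\eta,q_v}$ lands in the needed $L^\infty$ and $H^{s_v,2q_v}$ spaces. This uses $s_v\le1$ and $d/(2q_v)\le\zeta_\eta-s_v$, which follows from \eqref{Eq_pc2_new} together with $2(1+\kappa)/p<1$. We must also handle the fractional chain rule for $(g^2)''(v_\theta)$ in $H^{s_v,2q_v}$, which the interpolation argument above is meant to settle. Finally, the growth bound \eqref{eqn_GB_g2_case2} follows by taking $\bar v=0$ and using $g^2(0)=0$; if \eqref{assumption_for_global} is not assumed, the constant $\|g^2(0)\|_{\ell^2}$ is absorbed as in Lemma \ref{lemma: estimate for g^2 if sv is in -1 0}.
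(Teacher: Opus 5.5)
Your proof is correct, but it follows a different route from the paper's.

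The paper keeps the factorization $g^2(v)-g^2(\bar v)=(v-\bar v)\int_0^1(g^2)'(v_\theta)\,\mathrm{d}\theta$ from the preceding lemma. It applies the $L^\infty$-endpoint paraproduct of \cite[Proposition 4.1]{AV24} once, directly in $H^{1+s_v,q_v}$. It then bounds $\|(g^2)'(v_\theta)\|_{H^{1+s_v,q_v}}$ by complex interpolation between $H^{1,q_v}$ and $H^{2,q_v}$. The $H^{2,q_v}$-norm is computed from $\Delta (g^2)'(v_\theta)=(g^2)''(v_\theta)\Delta v_\theta+(g^2)'''(v_\theta)|\nabla v_\theta|^2$ together with $\|\nabla v_\theta\|_{L^{2q_v}}^2\le\|v_\theta\|^2_{H^{1+d/(2q_v),q_v}}$. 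The exponent $1+s_v$ therefore appears as $(1-s_v)+2s_v$.

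You instead remove one derivative by the chain rule and work at fractional order $s_v\in(0,1]$. You interpolate only $(g^2)''(v_\theta)$ between $L^{2q_v}$ and $H^{1,2q_v}$, so the exponent appears as $s_v+1$ after Young's inequality. Both routes use exactly the boundedness of $(g^2)'''$.

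Your route has two costs:
\begin{enumerate}[(i)]
\item You need the Kato--Ponce inequality with genuinely mixed H\"older pairs, in an $\ell^2$-valued version on $\mathbb{T}^d$, rather than just the $L^\infty$-endpoint estimate the paper cites. In the first product, pairing $H^{s_v,r}$ with $L^{d}$ where $d/r=d/q_v-1$ works, since $d/q_v>1$ by \eqref{Eq_pc2_new}. In the second product you pair $L^{2q_v}$ with $L^{2q_v}$.
\item You generate terms linear in $\|\bar v\|_{H^{1+d/(2q_v),q_v}}$, which you must still dominate via $x\le 1+x^{1+s_v}$.
\end{enumerate}
The paper's route is shorter, with a single paraproduct and all remaining computations at integer order. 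Yours makes the contribution of each factor more explicit.

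One remark applies equally to the paper. The right-hand side of \eqref{eqn_GB_g2_case2} vanishes at $v=0$, so $\|g^2(0)\|_{\ell^2}$ cannot be ``absorbed'' into it. The growth bound holds as stated only under $g^2(0)=0$. Otherwise it holds with an additional additive constant, which is all the growth condition in (HG) requires.
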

		\begin{proof}
			We start as in the proof of  Lemma \ref{lemma: estimate for g^2 if sv is in -1 0}
			from the estimate \eqref{eq: equation for I1 I2}. 
			Since \eqref{Eq_pc2_new} ensures the Sobolev embedding 
			\begin{align}
				\label{eqn_Sob_Emb}
				H^{s_v+2-2\frac{1+\kappa}{p}+\eta,q_v}(\mathbb{T}^d)\hookrightarrow L^\infty(\mathbb{T}^d),
			\end{align}
			we can bound already
			\begin{equation}\label{eqn9723432}
				I_1 \lesssim_{(g^2,\eta)} \|v-\bar{v}\|_{H^{\zeta_\eta,q_v}(\bT^d)},
			\end{equation}
			where $I_1$ is defined in \eqref{eq: equation for I1 I2}.
			For $I_2$ (defined in \eqref{eq: equation for I1 I2}), we use that 
			$$
			H^{1+s_v,q_v}(\bT^d;\ell^2) = \big[H^{1,q_v}(\bT^d;\ell^2),H^{2,q_v}(\bT^d;\ell^2)\big]_{s_v-1},
			$$
			to estimate the appearing integral by
			\begin{align}\label{eq: interpolation product in the bound for g2}\nonumber
				&  \biggl\| \int_0^1 (g^2)'(v_\theta))\,\mathrm{d}\theta\biggr\|_{H^{1+s_v,q_v}(\bT^d;\ell^2)}
				\\&\leq \int_0^1 \left\|(g^2)'(v_\theta)\right\|_{H^{1,q_v}(\bT^d;\ell^2)}^{1-s_v} \left\|(g^2)'(v_\theta)\right\|_{H^{2,q_v}(\bT^d;\ell^2) }^{s_v}\,\mathrm{d}\theta. 
			\end{align}
			Then, on the one hand, we have by  \eqref{eqn44} the estimate
			\begin{equation}\label{eqn45}
				\left\|(g^2)'(v_\theta)\right\|_{H^{1,q_v}(\bT^d;\ell^2)} \lesssim_{g^2} 1+\|v_\theta \|_{H^{1,q_v}(\bT^d)}.
			\end{equation}
			For the $H^{2,q_v}(\bT^d;\ell^2)$-norm, on the other hand,  we use that 
			\begin{align*}
				\begin{split}&
					\left\|(g^2)'(v_\theta)\right\|_{H^{2,q_v}(\bT^d;\ell^2) }\lesssim 
					\left\|(g^2)'(v_\theta)\right\|_{L^{q_v}(\bT^d;\ell^2) }+\left\|\Delta(g^2)'(v_\theta)\right\|_{L^{q_v}(\bT^d;\ell^2) } 
					\le  C_{g^2} + K_2,    
				\end{split}
			\end{align*}
			so that it remains to control $K_2$. 
			By the chain rule and product rule, we have
			\begin{equation*}
				\Delta(g^2)'(v_\theta)=\nabla\cdot((g^2)''(v_\theta)\nabla v_\theta)=(g^2)''(v_\theta)\Delta v_\theta + (g^2)'''(v_\theta)|\nabla v_\theta|^2,
			\end{equation*}
			and by the triangle inequality, we can bound the $L^{q_v}(\bT^d;\ell^2)$-norms of the two terms on the right hand side separately.
			For the first we simply have 
			\begin{equation*}
				\|(g^2)''(v_\theta)\Delta v_\theta\|_{L^{q_v}(\bT^d;\ell^2) }\lesssim_{{g^2}} \|v_\theta\|_{H^{2,q_v}(\mathbb{T}^d)},
			\end{equation*}
			in light of Assumption \ref{Ass_noise}~\eqref{Aii}. For the second term, we bound instead
			\begin{align*}\label{eq: bounding the gradient square of v theta}\nonumber
				\|(g^2)'''(v_\theta)|\nabla v_\theta|^2\|_{L^{q_v}(\bT^d;\ell^2) }&\lesssim_{{g^2}} \||\nabla v_\theta|^2\|_{L^{q_v}(\bT^d) }\le \|\nabla v_\theta\|^2_{L^{2q_v}(\bT^d) }\\& \le \|v_\theta\|^2_{H^{1,2q_v}(\bT^d) }
				\leq \|v_\theta\|_{H^{1+\frac{d}{2q_v},q_v}(\bT^d)}^2,
			\end{align*}
			where in the final inequality, we used the sharp Sobolev embedding 
			\begin{equation*}
				H^{1+\frac{d}{2q_v},q_v}(\bT^d) \hookrightarrow H^{1,2q_v}(\bT^d).
			\end{equation*}
			Combining with the above, we proved that 
			\begin{equation*}
				\left\|(g^2)'(v_\theta)\right\|_{H^{2,q_v}(\bT^d;\ell^2) }\leq C_{g^2}+ K_2 \lesssim_{g^2} 1+\|v_\theta\|_{H^{2,q_v}(\mathbb{T}^d)}+\|v_\theta\|_{H^{1+\frac{d}{2q_v},q_v}(\bT^d)}^2.
			\end{equation*}
			Inserting this and \eqref{eqn45} in \eqref{eq: interpolation product in the bound for g2}, results in the bound 
			\begin{align*}\label{eq: final bound for the product in the estimate for g2}\nonumber
				&  \int_0^1  \left\|(g^2)'(v_\theta)\right\|_{H^{1,q_v}(\bT^d;\ell^2)}^{1-s_v} \left\|(g^2)'(v_\theta)\right\|_{H^{2,q_v}(\bT^d;\ell^2) }^{s_v}\,\mathrm{d}\theta\\& \nonumber
				\lesssim_{g^2} \int_0^1 \left(1+ \|v_\theta\|_{H^{1,q_v}(\bT^d)}\right)^{1-s_v} \left(1+\|v_\theta\|_{H^{2,q_v}(\mathbb{T}^d)}+\|v_\theta\|_{H^{1+\frac{d}{2q_v},q_v}(\bT^d)}^2\right)^{s_v}\,\mathrm{d}\theta\\& \nonumber
				\lesssim_{g^2} \int_0^1 1+\|v_\theta\|_{H^{2,q_v}(\mathbb{T}^d)}+\|v_\theta\|_{H^{1+\frac{d}{2q_v},q_v}(\bT^d)}^{1+s_v}\,\mathrm{d}\theta
				\\&
				\le 1 + \|v\|_{H^{2,q_v}(\bT^d)} +
				\|\bar{v}\|_{H^{2,q_v}(\bT^d)} +\|{v}\|_{H^{1+\frac{d}{2q_v},q_v}(\bT^d)}^{1+s_v} +\|\bar{v}\|_{H^{1+\frac{d}{2q_v},q_v}(\bT^d)}^{1+s_v}
				.
			\end{align*}
			Finally, using once more the embedding \eqref{eqn_Sob_Emb}, we deduce 
			\begin{align*}
				I_2 \lesssim_{(g^2,\eta)}&
				\| v-\bar{v}\|_{ H^{\zeta_\eta,q_v}(\mathbb{T}^d)}\\&\times\Bigl(1 + \|v\|_{H^{2,q_v}(\bT^d)} +
				\|\bar{v}\|_{H^{2,q_v}(\bT^d)} +\|{v}\|_{H^{1+\frac{d}{2q_v},q_v}(\bT^d)}^{1+s_v} +\|\bar{v}\|_{H^{1+\frac{d}{2q_v},q_v}(\bT^d)}^{1+s_v}\Bigr).
			\end{align*}
			Together with \eqref{eqn9723432}, this entails the local Lipschitz estimate \eqref{eqn_LL_g2_case2}. As in the preceding lemma, the growth bound \eqref{eqn_GB_g2_case2} is a consequence thereof.
		\end{proof}
		
		\subsection{Proof of local well-posedness}\label{SS_lwp_conclusion}
		Using the results from the previous subsections, we are able to prove local well-posedness of the stochastic Keller--Segel system \eqref{25.06.19.16.01} by means of \cite[Theorem 4.8 (c)]{AV_SQEE_pt1}. The idea of the latter result is that a contraction estimate for a suitably truncated  version of the stochastic evolution equation \eqref{25.06.19.16.54} may be established, for instance, if the following  conditions are met:
		\begin{enumerate}[(1)]
			\item \label{I1} $A\in \mathcal{SMR}_{p,\kappa}^{\bullet}$.
			\item \label{I2} \cite[Hypothesis (HF)]{AV_SQEE_pt1}: There exist  parameters
			\begin{equation}\label{eq: parameter conditions for local wp}
				\varphi_j\in \Big(1-\frac{1+\kappa}{p},1\Big),\quad \beta_j\in\Big(1-\frac{1+\kappa}{p},\varphi_j\Big] ,\quad  \rho_j\geq 0,
			\end{equation}
			for $j\in \{1,\dots, m_F\}$ and  $C,L<\infty$, such that
			\begin{align}\label{eq: Lipschitz type estimate for F in local wp}
				\|F(U)-F(\bar{U})\|_{X_0}&\leq L\sum_{j=1}^{m_F}(1+\|U\|^{\rho_j}_{X_{\varphi_j}}+\|\bar{U}\|^{\rho_j}_{X_{\varphi_j}})\|U-\bar{U}\|_{X_{\beta_j}},\\
				\|F(U)\|_{X_0}&\leq C\sum_{j=1}^{m_F}(1+\|U\|^{\rho_j}_{X_{\varphi_j}})\|U\|_{X_{\beta_j}}+C,\label{eq: growth condition estimate for F in local wp}\end{align}
			for all $U,\bar{U}\in X_1$.
			It is crucial that the parameters from \eqref{eq: parameter conditions for local wp}   obey the (sub-)criticality condition
			\begin{equation}\label{eq: subcriticality condition in local wp proof}
				\rho_j\left(\varphi_j-1+\frac{1+\kappa}{p}\right)+\beta_j\leq 1.
			\end{equation}
			
			\item\label{I3} \cite[Hypothesis (HG)]{AV_SQEE_pt2}: There exist parameters \eqref{eq: parameter conditions for local wp} for $j\in \{m_F+1,\dots ,m_F+m_G\}$ satisfying the (sub-) criticality condition \eqref{eq: subcriticality condition in local wp proof}, such that 
			\begin{align}\label{eq: Lipschitz type estimate for G in local wp}
				\|G(U)-G(\bar{U})\|_{ \gamma(\mathcal{H},X_{1/2})}&\leq L\sum_{j=m_F+1}^{m_F+m_G}(1+\|U\|^{\rho_j}_{X_{\varphi_j}}+\|\bar{U}\|^{\rho_j}_{X_{\varphi_j}})\|U-\bar{U}\|_{X_{\beta_j}},
				\\\label{eq: growth condition estimate for G in local wp}
				\|G(U)\|_{ \gamma(\mathcal{H},X_{1/2})}&\leq C\sum_{j=m_F+1}^{m_F+m_G}(1+\|U\|^{\rho_j}_{X_{\varphi_j}})\|U\|_{X_{\beta_j}}+C,
			\end{align}
			for all $U,\bar{U}\in X_1$, up to enlarging $C$ and $L$.
		\end{enumerate} 
		\begin{rem}
			Under the conditions \eqref{I1}--\eqref{I3} above, \cite[Theorem 4.8 (c)]{AV_SQEE_pt1} not only yields the assertions of Theorem \ref{thm:local_wp} for \eqref{25.06.19.16.01}, but also continuous dependence on the initial data may be deduced. We refer the interested reader to the cited result.
		\end{rem}
		\begin{proof}[Proof of Theorem \ref{thm:local_wp}]
			We establish  Theorem \ref{thm:local_wp} by verifying the conditions \eqref{I1}--\eqref{I3} for $A$, $F$, and $G$ as defined in \eqref{def:A}, \eqref{Eq_F1} and \eqref{def:G}.

			\begin{enumerate}
				\item The fact that $A\in \mathcal{SMR}_{p,\kappa}^{\bullet}$ is the content of Theorem \ref{thm: A B in SMR}.
				\item  
				The two estimates \eqref{eq: Lipschitz type estimate for F in local wp} and \eqref{eq: growth condition estimate for F in local wp} are the ones  verified in Lemma \ref{lem: bound for F1}, if we set
				\begin{equation}\label{eqn43}
					m_F=1, \quad  \varphi_1=\beta_1=1-\frac{1+\kappa}{2p},\quad  \rho_1=1.
				\end{equation}
				Moreover, that $\varphi_1=1-\frac{1+\kappa}{2p}\in(1-\frac{1+\kappa}{p},1)$ follows from \eqref{Eq_standard}, while the (sub-)criticality condition \eqref{eq: subcriticality condition in local wp proof} holds by
				\begin{equation*}
					1\left(1-\frac{1+\kappa}{2p}-1+\frac{1+\kappa}{p}\right)+1-\frac{1+\kappa}{2p}=1.
				\end{equation*}
				We remark that the equality in the above reflects the scaling criticality of the spaces we work in, as discussed in Subsection \ref{sec: scaling critical spaces}.
				\item {
					We claim that also the estimates from Lemmas \ref{lem: bound for g1} and \ref{lemma: estimate for g^2 if sv is in -1 0} and \ref{lemma: estimate for g^2 if sv is in 0 1} can be cast in the form
					\eqref{eq: Lipschitz type estimate for G in local wp} and \eqref{eq: growth condition estimate for G in local wp}
					for admissible choices of parameters. For the sake of brevity, we treat the previously distinguished cases $s_v \in (-1,0]$ and $s_v \in (0,1]$ at once. 
					We set $m_G=4$ and for the estimate for $g^1$, we have
					\begin{equation*}
						\varphi_2=\beta_2=1-\frac{1+\kappa}{2p},\quad   \rho_2=0,
					\end{equation*}
					so that the right-hand sides of \eqref{Eq1010} and \eqref{Eq10101} match the first summand of \eqref{eq: Lipschitz type estimate for G in local wp} and \eqref{eq: growth condition estimate for G in local wp}, respectively. Since this is the same as in \eqref{eqn43} but with lower $\rho$, the parameter restrictions are met. 
					Because the situation for $g^2$ is more delicate, we first note that when $s_v\in(-1,0]$, the estimate in Lemma \ref{lemma: estimate for g^2 if sv is in -1 0} applies, for which we have the choices of parameters
					\begin{equation*}
						\varphi_3=\beta_3=1-\frac{1+\kappa}{2p},\quad   \rho_3=1,
					\end{equation*}
					which again satisfies \eqref{eq: parameter conditions for local wp} and the (sub)-criticality conditions since they are the same parameters as \eqref{eqn43}. We remark that the above choice of critical parameters is entirely for convenience and could be avoided by sharpening Lemma \ref{lemma: estimate for g^2 if sv is in -1 0}.
					
					Finally, for the estimate of $g^2$ when $s_v\in (0,1]$, we recall that the estimate \eqref{eqn_LL_g2_case2} is of the form 
					\begin{align}\begin{split}\label{eq: estimate for g2 in the local wp proof}
							&\|g^2(v) - g^2(\bar v)\|_{H^{1+s_v,q_v}(\mathbb{T}^d;\ell^2)} \\
							& \lesssim_{(\eta ,g^2)}\Big(1 + \|v\|_{H^{2,q_v}(\mathbb{T}^d)}+\|\bar{v}\|_{H^{2,q_v}(\bT^d)} \Big)  \|v - \bar v\|_{H^{\zeta_\eta,q_v}(\mathbb{T}^d)}
							\\&
							\quad + \left(\|v\|_{H^{1+\frac{d}{2q_v},q_v}(\mathbb{T}^d)}^{1+s_v}+\|\bar{v}\|_{H^{1+\frac{d}{2q_v},q_v}(\bT^d)}^{1+s_v}\right)\|v - \bar v\|_{H^{\zeta_\eta,q_v}(\mathbb{T}^d)},
						\end{split}
					\end{align}
					where $\zeta_\eta = s_v+2-2\frac{1+\kappa}{p}+\eta$, and we choose in the following  
					\begin{equation}\label{eq: epsilon small for local wp}
						\eta\in\left(0,\min\left\{s_v,\frac{2 (1+\kappa)}{p(2+s_v)}\right\}\right).
					\end{equation}
					Since $s_v, \frac{1+\kappa}{p}>0$, we note that the minimum of the two quantities is strictly positive, so that the above interval is non-empty. As opposed to the preceding choices, we aim to consider subcritical parameter sets in the following section to facilitate the proofs. 
					
					For the terms in the first line on the right hand side of \eqref{eq: estimate for g2 in the local wp proof}, we have the choices of parameters 
					\begin{equation*}
						\varphi_4=1-\min\left\{\frac{s_v}{2}, \frac{1+\kappa}{p}-\frac{\eta}{2}\right\},\quad  \beta_4=1-\frac{1+\kappa}{p}+\frac{\eta}{2},\quad   \rho_4=1,
					\end{equation*}
					where the minimum is in place to ensure that  $\varphi_4$ and $\beta_4$  satisfy \eqref{eq: parameter conditions for local wp}.
					For the subcriticality condition \eqref{eq: subcriticality condition in local wp proof}, we observe
					\begin{align*}&
						1\left(1-\min\left\{\frac{s_v}{2}, \frac{1+\kappa}{p}-\frac{\eta}{2}\right\}-1+\frac{1+\kappa}{p}\right)+1-\frac{1+\kappa}{p}+\frac{\eta}{2}\\& \quad = 
						1-\min\left\{\frac{s_v}{2}, \frac{1+\kappa}{p}-\frac{\eta}{2}\right\}+\frac{\eta}{2}< 1,
					\end{align*}
					as in particular 
					$$\eta\in\left(0,\min\left\{s_v,\frac{1+\kappa}{p}\right\}\right),
					$$
					by $s_v>0$.
					
					Finally, to verify that the terms in the final line of \eqref{eq: estimate for g2 in the local wp proof} satisfy the required estimates subcritically, we 
					introduce
					\begin{equation}\label{eq: definition of nu local wp}
						\nu := \frac{1+s_v}{2+s_v} + \varepsilon, 
					\end{equation}
					for
					\begin{equation}\label{eq: condition on epsilon' in proof of local wp}
						\varepsilon \in \left(0 , \min \left\{ \frac{p}{2 (1+\kappa)} \left(\frac{2(1+\kappa)}{p(2+s_v)} -\eta \right) ,
						\frac{s_v p}{4(1+\kappa)} \biggl( 1 - \frac{2(1+\kappa)}{p(2+s_v)} \biggr)
						\right\}\right).
					\end{equation}
					We realize that admissible choices of $\epsilon$ exist, since both of the terms in the minimum are positive. For the first, this follows from the fact that $\eta<\frac{2(1+\kappa)}{p(2+s_v)}$ from condition \eqref{eq: epsilon small for local wp}, and for the second this follows from $s_v>0$ and $\frac{1+\kappa}{p}\leq 1/2$, so that $1 - \frac{2(1+\kappa)}{p(2+s_v)}>1/2$.
					In order to pick the parameters $(\varphi_5, \beta_5, \rho_5)$ for  the final line of \eqref{eq: estimate for g2 in the local wp proof}, we apply the   embeddings 
					\begin{align} \begin{split}\label{eqn843265325}&
							H^{2(1-\nu\frac{1+\kappa}{p})+s_v,q_v}(\bT^d) \hookrightarrow H^{\zeta_\eta,q_v}(\bT^d), \\&  H^{2(1-\nu\frac{1+\kappa}{p})+s_v,q_v}(\bT^d) \hookrightarrow H^{1+\frac{d}{2q_v},q_v}(\bT^d),\end{split}
					\end{align}
					ensured by our choice of $\varepsilon$. Indeed, the former holds if
					\begin{align*}&s_v+2-2\frac{1+\kappa}{p}+\eta = 
						\zeta_\eta \leq 2\left(1-\nu\frac{1+\kappa}{p}\right)+s_v.
					\end{align*}
					Rearranging and using the definition of $\nu$ from \eqref{eq: definition of nu local wp} gives the equivalent 
					\begin{equation*}
						\frac{2(1+\kappa)}{p}\nu \leq \frac{2(1+\kappa)}{p}-\eta\iff \frac{2(1+\kappa)}{p}\epsilon\leq \frac{2(1+\kappa)}{p(2+s_v)}-\eta.
					\end{equation*}
					Multiplying both sides by $\frac{p}{2(1+\kappa)}$, we see that the inequality is guaranteed by the first term in the minimum in the smallness condition of $\epsilon$ in \eqref{eq: condition on epsilon' in proof of local wp}.

					For the second embedding, we require $1+\frac{d}{2q_v} \le 2\left(1-\nu\frac{1+\kappa}{p}\right)+s_v$.
					Using \eqref{Eq_pc2_new}, this condition is equivalent to 
					\[1+\frac{1}{2}\left( s_v + 2 - 2\frac{1+\kappa}{p}\right) \le s_v + 2 - 2\nu\frac{1+\kappa}{p}.
					\]
					Again, rearranging and using the definition of $\nu$ from \eqref{eq: definition of nu local wp}, this is seen to be equivalent to
					\begin{equation*}
						\frac{2(1+\kappa)}{p}\nu \leq \frac{s_v}{2}+\frac{1+\kappa}{p}\iff \frac{2(1+\kappa)}{p}\epsilon\leq \frac{s_v}{2}\left(1-\frac{2(1+\kappa)}{p(2+s_v)}\right).
					\end{equation*}
					Multiplying both sides by $\frac{p}{2(1+\kappa)}$, we find that the inequality is guaranteed by the second term in the minimum in the smallness condition for $\epsilon$, cf.\ \eqref{eq: condition on epsilon' in proof of local wp}.
					
					With the embeddings \eqref{eqn843265325} at hand, it follows that we can choose the parameters
					\begin{equation*}
						\beta_5 = \varphi_5 = 1 - \nu \frac{1+\kappa}{p}, \quad \rho_5=1+s_v.
					\end{equation*}
					The latter satisfy the  parameter condition \eqref{eq: parameter conditions for local wp} since  
					\begin{equation*}
						\varepsilon< \frac{1}{2+s_v},
					\end{equation*}
					and thus $\nu<1$, which is ensured by the first term in the minimum of \eqref{eq: condition on epsilon' in proof of local wp}.
					Furthermore, regarding the subcriticality condition \eqref{eq: subcriticality condition in local wp proof}, we verify
					\begin{align*}\begin{split}\label{eqn_1}
							& (1+s_v)\left(1 -\nu \frac{1+\kappa}{p}-1+\frac{1+\kappa}{p}\right)+1 - \nu \frac{1+\kappa}{p}
							\\& = 1+  (2+s_v)\frac{1+\kappa}{p}(1-\nu)  -\frac{1+\kappa}{p}
							\\&=1-\frac{1+\kappa}{p}\epsilon (2+s_v)<1,
						\end{split}
					\end{align*}
					completing the proof.}
			\end{enumerate}
		\end{proof}

		\section{Proof of long time well-posedness with high probability}\setcounter{equation}{0}\label{sec4}
		
		In this section, we prove Theorem~\ref{thm_global_sol}. 
		Throughout the section, we impose the additional assumption \eqref{assumption_for_global}, namely
		$$
		g^1(0,0,0)=0,
		\qquad
		g^2(0)=0.
		$$
		
		We first prove a short-time persistence estimate. 
		It states that smallness of the initial datum in the critical trace space $X_{1-\frac{1+\kappa}{p},p}$ implies existence up to a fixed time $T_0>0$ with high probability, together with a stopped maximal-regularity estimate.
		Theorem~\ref{thm_global_sol} will then follow by iterating this estimate on finitely many overlapping time intervals, following the strategy of \cite[Theorem~2.11]{Agresti2024StochasticNS}. For notational brevity, we write in the following $U=(u,v)$ and $X_\beta$ and $X_{1-\frac{1+\kappa}{p},p}$ for the complex interpolation space and the trace space \eqref{eq: definition of X beta} and \eqref{t_space} respectively, in line with the more general setting laid out above.

		\begin{lem}\label{lemma:smallData}
			Assume that the parameters $s,s_v,q,q_v,p,\kappa$ satisfy \eqref{Eq_standard} and  \eqref{Eq_pc1_new}--\eqref{Eq_pc5_new}, and let $g^{i}$, $i\in \{1,2\}$, satisfy Assumptions \ref{Ass_noise} and \eqref{assumption_for_global}. 
			Then there exist constants $T_0>0$ and $C_*>0$,  such that, for every $\varepsilon\in(0,1)$, if
			\begin{equation}\label{eqn47}
				U_0\in L^p_{\mathfrak{F}_0}\bigl(\Omega; X_{1-\frac{1+\kappa}{p},p}\bigr)
				\quad\text{and}\quad
				\bE\|U_0\|_{X_{1-\frac{1+\kappa}{p},p}}^p \le C_*\varepsilon,
			\end{equation}
			then the corresponding $L^p_{\kappa}$-maximal solution $(U,\sigma)$ to the stochastic Keller--Segel system \eqref{25.06.19.16.01} provided by Theorem \ref{thm:local_wp} satisfies
			\begin{equation}
				\label{eqn3245}
				\bP(\sigma \geq T_0)\ge 1-\varepsilon.
			\end{equation}
			Moreover, there exists a stopping time $\tau\in(0,\sigma]$ and  constants $K_\theta<\infty$ for $\theta \in [0,1/2)$,  such that \eqref{eqn47} implies also
			\begin{equation}\label{eqn9}
				\bP(\tau\ge T_0)\ge 1-\varepsilon,
			\end{equation}
			and we have the estimate
			\begin{equation}\label{eqn10}
				\bE\left[
				\mathbf{1}_{\{\tau\ge \bar T_0\}}
				\|U\|^p_{H^{\theta,p}((0,\bar T_0),w_\kappa;X_{1-\theta})}
				\right]
				\leq
				K_\theta\,\bE\|U_0\|_{X_{1-\frac{1+\kappa}{p},p}}^p,
			\end{equation}
			for all  $\bar T_0\in(0,T_0]$. In the case $p=2$, the left-hand side of \eqref{eqn10} needs to  be replaced by $\bE[
			\mathbf{1}_{\{\tau\ge \bar T_0\}}
			\|U\|^p_{C([0,\bar T_0];X_{1/2})}
			]$ for $\theta>0$.
		\end{lem}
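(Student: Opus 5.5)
The plan is to argue directly by a stopped fixed-point/bootstrap estimate built on the stochastic maximal regularity of Theorem \ref{thm: A B in SMR} and the nonlinear bounds of Lemmas \ref{lem: bound for F1}--\ref{lemma: estimate for g^2 if sv is in 0 1}, in the spirit of \cite[Theorem~2.11]{Agresti2024StochasticNS}. The assumption \eqref{assumption_for_global} lets us drop the additive constants in the growth bounds \eqref{eq: growth condition estimate for F in local wp}--\eqref{eq: growth condition estimate for G in local wp}. With the parameters $(\varphi_j,\beta_j,\rho_j)$ chosen in the proof of Theorem \ref{thm:local_wp}, we then have $\|F(U)\|_{X_0}+\|G(U)\|_{\gamma(\mathcal H,X_{1/2})}\lesssim \sum_j (1+\|U\|_{X_{\varphi_j}}^{\rho_j})\|U\|_{X_{\beta_j}}$, where every nonlinear summand ($\rho_j>0$) carries at least one extra power of $U$. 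The terms with $\rho_j=0$ (the Lipschitz part of $g^1$ and the ``1'' part of $g^2$) are linear in $U$ and of lower order. I would absorb them into the linear operator via a perturbation argument or small $T_0$: for the subcritical ones we gain a factor $T_0^{\delta}$, and for the critical linear $g^1$ term we use smallness of the time interval in the weighted norm.

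First, I introduce the stopping time
\[
\tau:=\inf\Bigl\{t\in[0,\sigma):\ \|U\|_{L^p((0,t),w_\kappa;X_1)}+\sup_{r\le t}\|U(r)\|_{X_{1-\frac{1+\kappa}{p},p}}\ge \delta\Bigr\}\wedge T_0\wedge\sigma,
\]
with $\delta>0$ small, to be fixed. Applying the $\mathcal{SMR}^\bullet_{p,\kappa}$ estimate to $U\mathbf 1_{[0,\tau)}$ gives a bound of the stopped norm $\bE\|U\|^p_{\mathcal{Z}(0,\tau)}$ by $C\,\bE\|U_0\|^p_{X_{1-\frac{1+\kappa}{p},p}}+C\,\bE\|F(U)\|^p_{L^p(0,\tau;X_0)}+C\,\bE\|G(U)\|^p$. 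Here $\mathcal Z$ denotes the $H^{\theta,p}(w_\kappa;X_{1-\theta})$ spaces together with $C(X_{\rm tr})$. The initial datum enters through the trace-space estimate for the free evolution $e^{-tA}U_0$, which is standard.

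Next, I bound the nonlinear terms. I use the interpolation/Sobolev embedding $\mathcal Z(0,\tau)\hookrightarrow L^{p\rho_j/\dots}(w_\kappa;X_{\varphi_j})$ exactly as in \cite[Lemma 4.16]{AV_SQEE_pt1}. The (sub)criticality \eqref{eq: subcriticality condition in local wp proof} is precisely what makes $\|(1+\|U\|^{\rho_j}_{X_{\varphi_j}})\|U\|_{X_{\beta_j}}\|_{L^p(0,\tau)}\lesssim T_0^{\epsilon_j}(\|U\|_{\mathcal Z}+\|U\|^{1+\rho_j}_{\mathcal Z})$, with $\epsilon_j\ge0$ and $\epsilon_j=0$ in the critical cases. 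On $[0,\tau)$ we have $\|U\|_{\mathcal Z}\le\delta$ pathwise, so the superlinear terms are bounded by $\delta^{\rho_j}\|U\|_{\mathcal Z}$, and these can be absorbed once $\delta$ and $T_0$ are small. This yields \eqref{eqn10} with $K_\theta$ independent of $\varepsilon$. The $p=2$ variant uses instead the $C([0,\bar T_0];X_{1/2})$ part of $\mathcal{SMR}^\bullet_{2,0}$.

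Finally, I use Chebyshev on $\{\tau<T_0\}$. On this event either $\tau=\sigma<T_0$, which by the blow-up criterion of \cite[Theorem 4.10]{AV_SQEE_pt2} forces the norms to reach $\delta$ (or the solution to be non-continuous), or the threshold $\delta$ is hit. Hence $\bP(\tau<T_0)\le\delta^{-p}\bE\|U\|^p_{\mathcal Z(0,\tau)}\le \delta^{-p}K\,C_*\varepsilon\le\varepsilon$ for $C_*:=\delta^p/K$, giving \eqref{eqn9} and then \eqref{eqn3245}, since $\tau\le\sigma$. I expect the main obstacle to be rigorously justifying the stopped estimate at the critical level. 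The critical terms admit no $T_0$-smallness and the blow-up alternative needs care, so I would first try to quote the localized estimate \cite[Theorem 4.8/Proposition 4.13]{AV_SQEE_pt1}, which already handles truncations, rather than redo it.
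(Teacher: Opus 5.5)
Your proposal follows the same route as the paper. It is a stopped stochastic maximal regularity bootstrap: linear and strictly subcritical contributions are made small by a positive power of $T_0$, and the critical quadratic contributions (those with $\varphi_j=\beta_j=1-\frac{1+\kappa}{2p}$, $\rho_j=1$) are made small by a pathwise threshold on the stopped norm. The event that $\sigma<T_0$ while staying below the threshold is excluded by \cite[Theorem~4.10(1)]{AV_SQEE_pt2}, and Chebyshev gives \eqref{eqn9}. The paper's function $\psi_R(x)=x/R-x^2$ is just a repackaging of your absorption ($Z^2\le r_0^pZ$ pathwise). So the critical-level obstacle you anticipate is already resolved by your own threshold, and you need not quote \cite{AV_SQEE_pt1}.

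Two small corrections. First, there is no ``critical linear $g^1$ term''. Every linear contribution (the terms with $\rho_j=0$ and the ``$1$'' in $1+\|U\|^{\rho_j}_{X_{\varphi_j}}$) has $\beta_j<1$, so it gains a positive power of $T_0$ by H\"older and no perturbation argument is needed. Second, in your displayed nonlinear bound this positive power must be kept on the linear part even when $\epsilon_j=0$; otherwise the absorption fails.

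The one claim your argument does not deliver is $\tau\in(0,\sigma]$. Your stopping functional contains $\sup_{r\le t}\|U(r)\|_{X_{1-\frac{1+\kappa}{p},p}}$, which equals $\|U_0\|_{X_{1-\frac{1+\kappa}{p},p}}$ at $t=0$. Since $U_0$ is only small in $L^p(\Omega)$, you get $\tau=0$ on $\{\|U_0\|_{X_{1-\frac{1+\kappa}{p},p}}\ge\delta\}$, which in general has positive probability. This does not affect \eqref{eqn9}--\eqref{eqn10}, but it contradicts the statement.

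The paper avoids this by thresholding a functional that vanishes as $t\downarrow 0$. It uses the mixed norms in $L^{2p}((0,t),w_\kappa;X_\beta)$ and $L^{(\rho_\vartheta+1)p}((0,t),w_\kappa;X_\vartheta)$ that actually enter the nonlinear estimate; these form the space $\mathscr X(t)$, with exponents fixed by \eqref{eq: rho definition general}. These norms are still bounded in expectation through the maximal regularity estimate, via
$\|U\|_{L^{(\rho_\vartheta+1)p}((0,t),w_\kappa;X_\vartheta)}\lesssim\|U\|_{C([0,t];X_{1-\frac{1+\kappa}{p},p})}^{1-\theta_\vartheta}\|U\|_{L^p((0,t),w_\kappa;X_1)}^{\theta_\vartheta}$.
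The pathwise bound $\|U\|_{\mathscr X(\tau)}\le\delta$ is all that the absorption and the blow-up criterion require. With this replacement your argument becomes the paper's, and a single threshold, as in your sketch, is enough.
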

		\begin{proof}
			{
				We give the proof in the case $p\neq 2$. 
				The case $p=2$ is analogous, with the $H^{\theta,p}$-estimate replaced by the corresponding estimate in $C([0,\bar T_0];X_{1/2})$.
				
				Set
				$$
				\beta:=1-\frac{1+\kappa}{2p}.
				$$
				We first record the nonlinear estimate, which we will use throughout the proof. For this, we crucially rely on the additional assumption \eqref{assumption_for_global}, since it eliminates the additional constants in the growth bounds that we proved. Indeed, Lemma~\ref{lem: bound for F1} and revisiting the proofs of Lemma~\ref{lem: bound for g1}, Lemma~\ref{lemma: estimate for g^2 if sv is in -1 0}, and Lemma~\ref{lemma: estimate for g^2 if sv is in 0 1} 
				yields the following
				pointwise bounds for
				$$
				N(U):=\|F(U)\|_{X_0}+\|G(U)\|_{\gamma(H,X_{1/2})}.
				$$
				If $s_v\in(-1,0]$, then
				\begin{equation*}\label{eq: N estimate sv nonpositive}
					N(U)
					\leq
					C\left(
					\|U\|_{X_\beta}
					+
					\|U\|_{X_\beta}^2
					\right).
				\end{equation*}
				If $s_v\in(0,1]$, then, with the parameter choices 
				$$
				\beta_4
				=
				1-\frac{1+\kappa}{p}+\frac{\eta}{2},
				\qquad
				\varphi_4
				=
				1-
				\min\left\{
				\frac{s_v}{2},
				\frac{1+\kappa}{p}-\frac{\eta}{2}
				\right\},
				$$
				and
				$$
				\beta_5
				=
				1-\nu\frac{1+\kappa}{p}.
				$$
				from the proof of Theorem~\ref{thm:local_wp}, we have
				\begin{equation}\label{eq: N estimate sv positive}
					N(U)
					\leq
					C\left(
					\|U\|_{X_\beta}
					+
					\|U\|_{X_\beta}^2
					+
					\|U\|_{X_{\beta_4}}\|U\|_{X_{\varphi_4}}
					+
					\|U\|_{X_{\beta_5}}^{2+s_v}
					\right).
				\end{equation}
				Here $\eta>0$ is chosen in accordance with \eqref{eq: epsilon small for local wp}, and $\nu$ is as in \eqref{eq: definition of nu local wp}. 
				Decreasing $\eta>0$ if necessary, we additionally assume that
				\begin{equation}\label{eq: eta small for short time lemma}
					\eta
					<
					\min\left\{
					\frac{s_v}{2},
					\frac{2}{3}\frac{1+\kappa}{p}
					\right\}.
				\end{equation}
				When $s_v\in(-1,0]$, the terms involving $\beta_4,\varphi_4,\beta_5$ are simply absent. 
				Thus, it suffices to write the argument in the form which covers the case $s_v\in(0,1]$.
				
				We now introduce the auxiliary space used in the stopping-time argument. 
				For $t<\sigma$, define
				\begin{align*}
					\label{eq: definition of mathscr X}
					\mathscr X(t)
					:={}&
					L^{2p}\bigl((0,t),w_\kappa;X_\beta\bigr)
					\cap
					L^{(\rho_{\beta_4}+1)p}
					\bigl((0,t),w_\kappa;X_{\beta_4}\bigr)
					\nonumber\\
					&\quad \cap
					L^{(\rho_{\varphi_4}+1)p}
					\bigl((0,t),w_\kappa;X_{\varphi_4}\bigr)
					\cap
					L^{(\rho_{\beta_5}+1)p}
					\bigl((0,t),w_\kappa;X_{\beta_5}\bigr),
				\end{align*}
				endowed with the natural sum norm. 
				The exponents are chosen so that
				\begin{equation}\label{eq: rho definition general}
					(\rho_\vartheta+1)
					\left(
					\vartheta-\left(1-\frac{1+\kappa}{p}\right)
					\right)
					=
					\frac{1+\kappa}{p},
					\qquad
					\vartheta\in\{\beta_4,\varphi_4,\beta_5\}.
				\end{equation}
				Equivalently,
				\begin{equation}\label{eq: rho explicit}
					\rho_{\beta_4}+1
					=
					\frac{2(1+\kappa)}{p\eta},
					\quad
					\rho_{\varphi_4}+1
					=
					\frac{1+\kappa}{
						1+\kappa
						-
						p\min\left\{
						\frac{s_v}{2},
						\frac{1+\kappa}{p}-\frac{\eta}{2}
						\right\}},
					\quad
					\rho_{\beta_5}+1
					=
					\frac{1}{1-\nu}.
				\end{equation}
				The definition \eqref{eq: rho definition general} is chosen so that the usual interpolation argument applies. 
				More precisely, for
				$\vartheta\in\{\beta_4,\varphi_4,\beta_5\}$,
				\begin{equation}\label{eq: interpolation into mathscr X}
					\|U\|_{L^{(\rho_\vartheta+1)p}((0,t),w_\kappa;X_\vartheta)}
					\lesssim
					\|U\|_{C([0,t];X_{1-\frac{1+\kappa}{p},p})}^{1-\theta_\vartheta}
					\|U\|_{L^p((0,t),w_\kappa;X_1)}^{\theta_\vartheta},
				\end{equation}
				where
				$$
				\theta_\vartheta
				=
				\frac{
					\vartheta-\left(1-\frac{1+\kappa}{p}\right)
				}{
					\frac{1+\kappa}{p}
				}
				\in(0,1).
				$$
				In particular, by the local regularity of $U$, we have
				$$
				U\in \mathscr X(t)
				\qquad
				\text{for every }t<\sigma,\quad {\text{almost surely.}}
				$$
				
				We next derive the time-integrated estimate for $N(U)$. 
				By Young's inequality,
				\begin{equation*}\label{eq: young beta4 phi4}
					\|U\|_{X_{\beta_4}}\|U\|_{X_{\varphi_4}}
					\lesssim
					\|U\|_{X_{\beta_4}}^{\frac{1+\kappa}{p\eta}}
					+
					\|U\|_{X_{\varphi_4}}^{
						\frac{1+\kappa}{1+\kappa-p\eta}
					}.
				\end{equation*}
				Inserting this into \eqref{eq: N estimate sv positive}, we obtain
				\begin{equation*}\label{eq: N estimate after Young}
					N(U)
					\leq
					C\left(
					\|U\|_{X_\beta}
					+
					\|U\|_{X_\beta}^2
					+
					\|U\|_{X_{\beta_4}}^{\frac{1+\kappa}{p\eta}}
					+
					\|U\|_{X_{\varphi_4}}^{
						\frac{1+\kappa}{1+\kappa-p\eta}
					}
					+
					\|U\|_{X_{\beta_5}}^{2+s_v}
					\right).
				\end{equation*}
				By  \eqref{eq: eta small for short time lemma} and \eqref{eq: rho explicit}, the powers appearing in the subcritical terms are strictly smaller than the corresponding time-integrability exponents in the definition of $\mathscr X(t)$.
				More precisely,
				$$
				\frac{1+\kappa}{p\eta}
				<
				\rho_{\beta_4}+1,
				\qquad
				\frac{1+\kappa}{1+\kappa-p\eta}
				<
				\rho_{\varphi_4}+1,
				\qquad
				2+s_v
				<
				\rho_{\beta_5}+1.
				$$
				Thus the terms
				$$
				\|U\|_{X_{\beta_4}}^{\frac{1+\kappa}{p\eta}},
				\qquad
				\|U\|_{X_{\varphi_4}}^{
					\frac{1+\kappa}{1+\kappa-p\eta}
				},
				\qquad
				\|U\|_{X_{\beta_5}}^{2+s_v}
				$$
				are strictly subcritical with respect to the corresponding components of
				$\mathscr X(t)$.
				The linear term $\|U\|_{X_\beta}$ is also subcritical with respect to $L^{2p}((0,t),w_\kappa;X_\beta)$, whereas the quadratic term $\|U\|_{X_\beta}^2$ is critical. 
				Hence, H\"older's inequality gives that there
				exist {constants \(C<\infty\) }and \(\iota>0\) such that, for all \(t\leq 1\),
				\begin{equation}
					\label{eq: integrated N estimate}
					\begin{aligned}&
						\|N(U)\|_{L^p((0,t),w_\kappa)}^p \\&\quad 
						\leq{}
						C t^\iota
						\left(
						\|U\|_{\mathscr X(t)}^p
						+
						\|U\|_{\mathscr X(t)}^{\frac{1+\kappa}{\eta}}
						+
						\|U\|_{\mathscr X(t)}^{
							p\frac{1+\kappa}{1+\kappa-p\eta}
						}
						+
						\|U\|_{\mathscr X(t)}^{(2+s_v)p}
						\right)
						+
						C\|U\|_{\mathscr X(t)}^{2p}.
					\end{aligned}
				\end{equation}
				We remark that in the case $s_v\in(-1,0]$, the same estimate holds with the subcritical terms corresponding to $\beta_4,\varphi_4,\beta_5$ omitted.
				
				Let $T_0\leq 1$ and $r_0>0$ be chosen later. 
				Define
				\begin{equation*}\label{eq: definition good event}
					\mathcal O_{T_0,r_0}
					:=
					\left\{
					\omega\in\Omega:
					\|U(\omega)\|_{\mathscr X(\sigma(\omega)\wedge T_0)}
					\leq r_0
					\right\}.
				\end{equation*}
				By \eqref{eq: integrated N estimate}, on the event $\mathcal O_{T_0,r_0}$ we have
				\begin{equation}\label{eq: N bounded on good event}
					\begin{aligned}
						\|N(U)\|_{L^p((0,\sigma\wedge T_0),w_\kappa)}^p
						&\leq
						C T_0^\iota
						\left(
						r_0^p
						+
						r_0^{\frac{1+\kappa}{\eta}}
						+
						r_0^{p\frac{1+\kappa}{1+\kappa-p\eta}}
						+
						r_0^{(2+s_v)p}
						\right)
						+
						C r_0^{2p}
						\\
						&=: C_1(T_0,r_0).
					\end{aligned}
				\end{equation}
				
				{By means of the blow-up criteria from \cite{AV_SQEE_pt2} one can argue that}
				\begin{equation}\label{eq: no blowup on good event}
					\bP\bigl(\{\sigma<T_0\}\cap\mathcal O_{T_0,r_0}\bigr)=0.
				\end{equation}
				Indeed, by \eqref{eq: N bounded on good event}, on the event $\mathcal O_{T_0,r_0}$ we have
				$$
				\|N(U)\|_{L^p((0,\sigma\wedge T_0),w_\kappa)}^p
				\leq
				C_1(T_0,r_0)<\infty.
				$$
				Hence, on \(\{\sigma<T_0\}\cap\mathcal O_{T_0,r_0}\),
				$$
				\|N(U)\|_{L^p((0,\sigma),w_\kappa)}<\infty.
				$$
				Therefore,
				$$
				\{\sigma<T_0\}\cap\mathcal O_{T_0,r_0}
				\subset
				\left\{
				\sigma<T_0,\,
				\|N(U)\|_{L^p((0,\sigma),w_\kappa)}<\infty
				\right\},
				$$
				{and 
					by the semilinear blow-up criterion \cite[Theorem~4.10(1)]{AV_SQEE_pt2}, the latter event has probability zero.
					This proves the claimed \eqref{eq: no blowup on good event}.}
				
				We now estimate the probability of the complement of the event $\mathcal O_{T_0,r_0}$. 
				For each $n\geq 1$, define the stopping times
				\begin{equation*}
					\label{eq: definition sigma n mu mu n}
					\begin{aligned}
						\sigma_n
						&:=
						\inf\left\{
						t\in[0,\sigma):
						\|U\|_{L^p((0,t),w_\kappa;X_1)}^p
						\geq n
						\right\}\wedge \sigma,
						\\
						\mu
						&:=
						\inf\left\{
						t\in[0,\sigma):
						\|U\|_{\mathscr X(t)}
						\geq r_0
						\right\}\wedge T_0\wedge\sigma,
						\\
						\mu_n
						&:=
						\mu\wedge\sigma_n.
					\end{aligned}
				\end{equation*}
				Here and below, we use the convention that the infimum of the empty set is $\infty$. 
				Since $(U,\sigma)$ is a local solution, the {$L^p((0,t), w_\kappa; X_1)$-norm  of $U$ is finite} for every $t<\sigma$. 
				Hence
				$$
				\sigma_n\uparrow\sigma
				\qquad\text{almost surely as }n\to\infty,
				$$
				{and as a consequence}
				$$
				\mu_n=\mu\wedge\sigma_n\uparrow\mu
				\qquad\text{almost surely as }n\to\infty.
				$$
				Moreover, by the definition of $\mu$ and the continuity of $t\mapsto \|U\|_{\mathscr X(t)}$, we have
				\begin{equation}\label{eq: mu bound by r0}
					\|U\|_{\mathscr X(\mu_n)}
					\leq
					\|U\|_{\mathscr X(\mu)}
					\leq r_0
					\qquad\text{almost surely}.
				\end{equation}
				
				Applying \cite[Lemma~5.3]{AV_SQEE_pt2} on the stochastic interval $[0,\mu_n]$, we obtain
				$$
				\begin{aligned}
					&\bE\|U\|_{L^p((0,\mu_n),w_\kappa;X_1)}^p
					+
					\bE\|U\|_{C([0,\mu_n];X_{1-\frac{1+\kappa}{p},p})}^p
					\\
					&\qquad\leq
					C\left(
					\bE\|U_0\|_{X_{1-\frac{1+\kappa}{p},p}}^p
					+
					\bE\|N(U)\|_{L^p((0,\mu_n),w_\kappa)}^p
					\right).
				\end{aligned}
				$$
				Together with the interpolation estimate \eqref{eq: interpolation into mathscr X}, this implies
				$$
				\bE\|U\|_{\mathscr X(\mu_n)}^p
				\leq
				C\left(
				\bE\|U_0\|_{X_{1-\frac{1+\kappa}{p},p}}^p
				+
				\bE\|N(U)\|_{L^p((0,\mu_n),w_\kappa)}^p
				\right).
				$$
				Since $\mu_n\leq T_0\leq 1$, the estimate
				\eqref{eq: integrated N estimate} gives
				\begin{equation}
					\label{eq: X mu n bootstrap before absorption}
					\begin{aligned}
						\bE\|U\|_{\mathscr X(\mu_n)}^p
						\leq{}&
						C\bE\|U_0\|_{X_{1-\frac{1+\kappa}{p},p}}^p\\
						&+
						C T_0^\iota
						\bE\Bigl[
						\|U\|_{\mathscr X(\mu_n)}^p
						+
						\|U\|_{\mathscr X(\mu_n)}^{\frac{1+\kappa}{\eta}}
						+
						\|U\|_{\mathscr X(\mu_n)}^{
							p\frac{1+\kappa}{1+\kappa-p\eta}
						}
						+
						\|U\|_{\mathscr X(\mu_n)}^{(2+s_v)p}
						\Bigr]\\
						&+
						C\bE\|U\|_{\mathscr X(\mu_n)}^{2p}.
					\end{aligned}
				\end{equation}
				Set
				$$
				Z_n:=\|U\|_{\mathscr X(\mu_n)}^p.
				$$
				Then \eqref{eq: mu bound by r0} implies
				$$
				0\leq Z_n\leq r_0^p
				\qquad\text{almost surely}.
				$$
				In terms of $Z_n$, \eqref{eq: X mu n bootstrap before absorption} becomes
				\begin{equation}
					\label{eq: Zn bootstrap before absorption}
					\begin{aligned}
						\bE Z_n
						\leq{}&
						C\bE\|U_0\|_{X_{1-\frac{1+\kappa}{p},p}}^p+
						C T_0^\iota
						\bE\left[
						Z_n
						+
						Z_n^{\frac{1+\kappa}{p\eta}}
						+
						Z_n^{\frac{1+\kappa}{1+\kappa-p\eta}}
						+
						Z_n^{2+s_v}
						\right]
						+
						C\bE Z_n^2.
					\end{aligned}
				\end{equation}
				Choose $R=2C$, where $C$ is the constant in \eqref{eq: Zn bootstrap before absorption}, and then choose $r_0,\bar{r}_0>0$ by
				\begin{equation}\label{eq: choice r0}
					r_0^p:=\frac{1}{2R},\quad \bar{r}_0^p:=\frac{3}{4R}.
				\end{equation}
				We choose $T_0\leq 1$ sufficiently small so that
				\begin{equation}\label{eq: choice T0 absorption}
					C T_0^\iota
					\left(
					1
					+
					\bar{r}_0^{\frac{1+\kappa}{\eta}-p}
					+
					\bar{r}_0^{
						\frac{p(1+\kappa)}{1+\kappa-p\eta}-p
					}
					+
					\bar{r}_0^{(1+s_v)p}
					\right)
					\leq
					\frac{1}{2}.
				\end{equation}
				Using $Z_n\leq r_0^p\leq \bar{r}_0^p$, the choice \eqref{eq: choice T0 absorption} allows us to absorb the $T_0^\iota$-terms in \eqref{eq: Zn bootstrap before absorption}.
				Thus
				$$
				\bE Z_n
				\leq
				2C\bE\|U_0\|_{X_{1-\frac{1+\kappa}{p},p}}^p
				+
				2C\bE Z_n^2.
				$$
				Since $R=2C$, this is equivalent to
				\begin{equation}\label{eq: psi R Zn estimate}
					\bE\bigl[\psi_R(Z_n)\bigr]
					\leq
					\bE\|U_0\|_{X_{1-\frac{1+\kappa}{p},p}}^p,
					\qquad
					\psi_R(x):=\frac{x}{R}-x^2.
				\end{equation}
				
				{To take the limit $n\to\infty$ in the above estimate, we observe that by } \eqref{eq: choice r0} we have $Z_n\leq 1/(2R)$ almost surely. 
				In particular, $(Z_n)_{n\geq1}$ is nondecreasing and takes values in $[0,1/(2R)]$. 
				Since $\psi_R'\geq0$ on $[0,1/(2R)]$, the function $\psi_R$ is nondecreasing and nonnegative on this interval. 
				Hence, by the monotone convergence theorem,
				\begin{equation}\label{eq: psi R mu estimate}
					\bE\left[
					\psi_R\left(\|U\|_{\mathscr X(\mu)}^p\right)
					\right]
					\leq
					\bE\|U_0\|_{X_{1-\frac{1+\kappa}{p},p}}^p.
				\end{equation}
				
				The preceding bound may then be used to show that the probability of $\mathcal O_{T_0,r_0}$  is large for small initial data as follows: On its complement 
				$\Omega\setminus\mathcal O_{T_0,r_0}$, we have $\mu<\sigma\wedge T_0$.  By the continuity of $t\mapsto \|U\|_{\mathscr X(t)}$ and the definition of $\mu$, this entails
				$$
				\|U\|_{\mathscr X(\mu)}^p
				=
				r_0^p
				=
				\frac{1}{2R}.
				$$
				Therefore,
				$$
				\psi_R\left(\|U\|_{\mathscr X(\mu)}^p\right)
				=
				\psi_R\left(\frac{1}{2R}\right)
				=
				\frac{1}{4R^2}
				\qquad\text{on }\Omega\setminus\mathcal O_{T_0,r_0},
				$$
				{and 
					combining this with \eqref{eq: psi R mu estimate}, we obtain
					\begin{equation}\label{eq: good event complement probability}
						\frac{1}{4R^2}\,
						\bP(\Omega\setminus\mathcal O_{T_0,r_0})
						\leq
						\bE\|U_0\|_{X_{1-\frac{1+\kappa}{p},p}}^p.
					\end{equation}
					It remains to set
					\begin{equation*}\label{eq: definition C star}
						C_*:=\frac{1}{4R^2}.
					\end{equation*}
					Then, if 
					$$
					\bE\|U_0\|_{X_{1-\frac{1+\kappa}{p},p}}^p
					\leq
					C_*\varepsilon,
					$$
					\eqref{eq: good event complement probability} implies
					\begin{equation}\label{eq: good event high probability}
						\bP(\mathcal O_{T_0,r_0})\geq 1-\varepsilon.
					\end{equation}
					Together with \eqref{eq: no blowup on good event}, this proves  the first part of the assertion \eqref{eqn3245}.}
				
				We now prove  {the corresponding  a priori} estimate, for which we recall $\bar{r}_0$ from \eqref{eq: choice r0} and define
				\begin{equation*}\label{eq: definition tau for stopped estimate}
					\tau
					:=
					\inf\left\{
					t\in[0,\sigma):
					\|U\|_{\mathscr X(t)}
					\geq
					\bar{r}_0
					\right\}
					\wedge\sigma.
				\end{equation*}
				Since
				$$
				r_0^p=\frac{1}{2R}
				<
				\frac{3}{4R}= \bar{r}_0^p,
				$$
				we have
				$$
				\tau\wedge T_0
				=
				\sigma\wedge T_0
				\qquad
				\text{on }\mathcal O_{T_0,r_0}.
				$$
				Consequently, by \eqref{eq: no blowup on good event} and
				\eqref{eq: good event high probability},
				$$
				\bP(\tau\geq T_0)
				\geq
				\bP(\{\sigma\geq T_0\}\cap\mathcal O_{T_0,r_0})
				=
				\bP(\mathcal O_{T_0,r_0})
				\geq
				1-\varepsilon,
				$$
				proving that also $\tau$ obeys \eqref{eqn9}.
				
				{It remains to show the bound \eqref{eqn10}. }Fix $\bar T_0\in(0,T_0]$ and set
				\begin{equation*}\label{eq: definition Z stopped}
					Z
					:=
					\|U\|_{\mathscr X(\tau\wedge\bar T_0)}^p.
				\end{equation*}
				Repeating the argument leading to \eqref{eq: psi R Zn estimate}, now on the stochastic interval $[0,\tau\wedge\bar T_0]$, we obtain
				\begin{equation}\label{eq: psi R Z stopped}
					\bE[\psi_R(Z)]
					\leq
					\bE\|U_0\|_{X_{1-\frac{1+\kappa}{p},p}}^p.
				\end{equation}
				By the definition of $\tau$,
				\begin{equation}\label{eq: Z stopped bound}
					0\leq Z\leq \bar{r}_0^p=\frac{3}{4R}
					\qquad\text{almost surely}.
				\end{equation}
				For $x\in[0,3/(4R)]$, we have
				$$
				\psi_R(x)=\frac{x}{R}-x^2
				\geq
				\frac{x}{4R}.
				$$
				Therefore, \eqref{eq: psi R Z stopped} gives
				\begin{equation}\label{eq: EZ stopped bound}
					\bE Z
					\leq
					4R\,\bE\|U_0\|_{X_{1-\frac{1+\kappa}{p},p}}^p,
				\end{equation}
				which in combination with  \eqref{eq: Z stopped bound} yields
				\begin{equation}\label{eq: EZ square stopped bound}
					\bE Z^2
					\leq
					\frac{3}{4R}\bE Z
					\leq
					3\,\bE\|U_0\|_{X_{1-\frac{1+\kappa}{p},p}}^p.
				\end{equation}
				
				{To deduce \eqref{eqn10}, we observe that  on} the event $\{\tau\geq \bar T_0\}$, we have
				$\tau\wedge\bar T_0=\bar T_0$. {We remark that it is precisely at this point, where we profit from defining $\tau$ in terms of the larger $\bar{r}_0$ instead of $r_0$. Consequently}, using
				\eqref{eq: integrated N estimate} and the choice of $T_0$ in
				\eqref{eq: choice T0 absorption}, we obtain
				\begin{equation*}\label{eq: stopped N estimate by Z}
					\mathbf 1_{\{\tau\geq \bar T_0\}}
					\|N(U)\|_{L^p((0,\bar T_0),w_\kappa)}^p
					\leq
					\frac12 Z + C Z^2.
				\end{equation*}
				Taking expectations and using \eqref{eq: EZ stopped bound} and
				\eqref{eq: EZ square stopped bound}, we get
				\begin{equation}\label{eq: stopped N expectation bound}
					\bE\left[
					\mathbf 1_{\{\tau\geq \bar T_0\}}
					\|N(U)\|_{L^p((0,\bar T_0),w_\kappa)}^p
					\right]
					\leq
					C\bE\|U_0\|_{X_{1-\frac{1+\kappa}{p},p}}^p.
				\end{equation}
				
				Finally, define the stopped nonlinearities
				$$
				F_\tau
				:=
				\mathbf 1_{[0,\tau\wedge\bar T_0)}F(U),
				\qquad
				G_\tau
				:=
				\mathbf 1_{[0,\tau\wedge\bar T_0)}G(U).
				$$
				By \eqref{eq: stopped N expectation bound},
				$$
				F_\tau
				\in
				L^p\bigl(\Omega;L^p((0,\bar T_0),w_\kappa;X_0)\bigr),
				\qquad
				G_\tau
				\in
				L^p\bigl(\Omega;L^p((0,\bar T_0),w_\kappa;\gamma(H,X_{1/2}))\bigr).
				$$
				Since $A\in\mathcal{SMR}_{p,\kappa}^{\bullet}$ by
				Theorem~\ref{thm: A B in SMR}, stochastic maximal regularity and the
				localization property \cite[Propositions~3.10 and 3.12]{AV_SQEE_pt2} yield,
				for each $\theta\in[0,1/2)$,
				$$
				\bE\left[
				\mathbf 1_{\{\tau\geq \bar T_0\}}
				\|U\|_{H^{\theta,p}((0,\bar T_0),w_\kappa;X_{1-\theta})}^p
				\right]
				\leq
				K_\theta
				\bE\|U_0\|_{X_{1-\frac{1+\kappa}{p},p}}^p.
				$$
				This proves \eqref{eqn10} and completes the proof of the lemma.}
		\end{proof}

		\begin{proof}[Proof of Theorem \ref{thm_global_sol}]As for the preceding lemma we give the proof for $p\ne 2$, while the case that $p=2$ may be treated analogously. 
			{Fix an arbitrary desired existence time {$T<\infty$}.
				We prove the theorem by iterating Lemma \ref{lemma:smallData} on successive time intervals of length $T_0>0$ specified in the lemma. 
				Define $N_0:=2\Bigl\lceil \frac{T}{T_0}\Bigr\rceil$, so that after at most $N_0$ iterations we can cover the whole interval $[0,T]$.

				Next, fix $\theta_0\in\Bigl(\frac{1+\kappa}{p},\frac12\Bigr)$.
				By \cite[Proposition 2.1]{AV25}, there exists a constant {$c_0>0$} such that
				\begin{equation}\label{eq:traceEmbeddingEstimate}
					\sup_{t\in[0,T_0]}
					\|v(t)\|_{X_{1-\frac{1+\kappa}{p},p}}
					\leq
					c_0\Bigl(
					\|v\|_{H^{\theta_0,p}((0,T_0),w_\kappa;X_{1-\theta_0})}
					+
					\|v\|_{L^p((0,T_0),w_\kappa;X_1)}
					\Bigr)
				\end{equation}
				for every $v\in H^{\theta_0,p}(0,T_0,w_\kappa;X_{1-\theta_0})
				\cap
				L^p(0,T_0,w_\kappa;X_1)$.
				We then define
				$$
				C_1:=\bigl((K_{\theta_0}+K_0)c_0\bigr)\vee 1,
				$$
				where $K_{\theta_0}$ and $K_0$ are the constants {$K_\theta$} appearing in Lemma \ref{lemma:smallData} {for the choices $\theta=\theta_0$ and $0$ respectively}. 
				To distribute the admissible error over the whole iteration scheme, we set
				$$
				\varepsilon_0:=\frac{\varepsilon}{2^{N_0}}.
				$$
				We then choose the smallness threshold for the initial datum as
				$$
				C_T
				:=
				\frac{C_*}{2^{N_0+1}C_1^{N_0}},
				$$
				where $C_*$ is the constant from Lemma \ref{lemma:smallData}.

				Let
				$$
				0=t_0<t_1<\cdots<t_{N_0}=T
				$$
				be a partition of $[0,T]$ with mesh size at most $T_0/2$.
				We shall construct a sequence of stopping times inductively
				$\{\tau_n\}_{n=1}^{N_0}$, with $\tau_n\leq \sigma$, such that, for every
				$n\in\{1,\dots,N_0\}$ and every $\theta\in[0,1/2)$,
				\begin{align}
					\bP(\tau_n\geq t_n)
					&>1-2^{\,n}\varepsilon_0
					=1-2^{\,n-N_0}\varepsilon,\label{eq:inductiveProb}\\
					\bE\Bigl[
					\mathbf 1_{\{\tau_n\geq t_n\}}
					\|U(t_n)\|_{X_{1-\frac{1+\kappa}{p},p}}^p
					\Bigr]
					&\leq C_1^{n-N_0}C_*\varepsilon_0,\label{eq:inductiveTrace}\\
					\bE\Bigl[
					\mathbf 1_{\{\tau_n\geq t_n\}}
					\|U\|_{H^{\theta,p}((0,t_n),w_\kappa;X_{1-\theta})}^p
					\Bigr]
					&\lesssim_\theta
					\bE\|U_0\|_{X_{1-\frac{1+\kappa}{p},p}}^p.\label{eq:inductiveReg}
				\end{align}
				For the initial step, we first observe that, by the choice of $C_T$,
				$$
				\bE\|U_0\|_{X_{1-\frac{1+\kappa}{p},p}}^p
				\leq C_T\varepsilon
				=
				\frac{1}{2} C_1^{-N_0}C_*\varepsilon_0
				\leq C_*\varepsilon_0,
				$$
				{where we used in the last estimate that $C_1\ge 1$.}
				Thus, Lemma \ref{lemma:smallData} applies with $\varepsilon_0$ in place of
				$\varepsilon$. Let $\tau$ be the corresponding stopping time and set
				$\tau_1=\tau_2:=\tau$.
				Since $t_1,t_2\leq T_0$, Lemma \ref{lemma:smallData} gives
				$$
				\bP(\tau_n\geq t_n)>1-\varepsilon_0
				\geq 1-2^n\varepsilon_0,
				\qquad n=1,2.
				$$
				Moreover, by the trace embedding \eqref{eq:traceEmbeddingEstimate} and the
				estimates from Lemma \ref{lemma:smallData},
				$$
				\bE\Bigl[
				\mathbf 1_{\{\tau_n\geq t_n\}}
				\|U(t_n)\|_{X_{1-\frac{1+\kappa}{p},p}}^p
				\Bigr]
				\leq
				C_1\bE\|U_0\|_{X_{1-\frac{1+\kappa}{p},p}}^p
				\leq
				\frac12 C_1^{1-N_0}C_*\varepsilon_0
				\leq
				C_1^{n-N_0}C_*\varepsilon_0,
				\qquad n=1,2.
				$$
				Finally, the regularity estimate \eqref{eq:inductiveReg} for $n=1,2$ follows
				directly from Lemma \ref{lemma:smallData}.
				
				We now assume that $\tau_n$ has been constructed for some $n\in\{2,\dots,N_0-1\}$, and we construct $\tau_{n+1}$. 
				The idea is to reapply Lemma \ref{lemma:smallData} starting from time $t_{n-1}$ and $t_n$ in order to extend the solution up to time $t_{n+1}$ with suitable estimates. 
				To this end, define
				$$
				V_{n-1}:=\mathbf 1_{\{\tau_{n-1}\geq t_{n-1}\}}\,U(t_{n-1}).
				$$
				Then, {in light of \eqref{eq:inductiveTrace},}
				$$
				V_{n-1}\in L^p_{\mathcal F_{t_{n-1}}}\bigl(\Omega;X_{1-\frac{1+\kappa}{p},p}\bigr),
				$$
				{with the bound}
				\begin{align*}
					\bE\|V_{n-1}\|_{X_{1-\frac{1+\kappa}{p},p}}^p
					=
					\bE\Bigl[
					\mathbf 1_{\{\tau_{n-1}\geq t_{n-1}\}}
					\|U(t_{n-1})\|_{X_{1-\frac{1+\kappa}{p},p}}^p
					\Bigr]\leq
					C_1^{n-1-N_0}C_*\varepsilon_0
					\leq
					C_*\varepsilon_0,
				\end{align*}
				since $C_1\geq1$ and $n-1\leq N_0$.
				Hence $V_{n-1}$ is admissible as an initial condition for
				Lemma \ref{lemma:smallData} with $\varepsilon_0$ in place of $\varepsilon$ {and the initial time replaced by $t_{n-1}$}.

				By the time-shifted version of Theorem \ref{thm:local_wp}, there exists a $L^p_{\kappa}$-maximal solution $U_{n-1}$ defined on $[t_{n-1},\sigma_{n-1})$, with $\sigma_{n-1}>t_{n-1}$ the time of local existence and with initial condition
				$$
				U_{n-1}(t_{n-1})=V_{n-1}.
				$$
				Let
				$$
				\lambda_{n-1}\in (t_{n-1},\sigma_{n-1}]
				$$
				be the stopping time given by the corresponding time-shifted version of Lemma \ref{lemma:smallData}. Since each interval of the partition has length less or equal than $T_0/2$, we have
				$$
				t_{n+1}-t_{n-1}\leq T_0.
				$$
				Therefore,
				$$
				\bP(\lambda_{n-1}\geq t_{n+1})
				\geq
				\bP(\lambda_{n-1}\geq t_{n-1}+T_0)
				>
				1-\varepsilon_0
				=
				1-2^{-N_0}\varepsilon.
				$$
				Moreover, applying the trace embedding \eqref{eq:traceEmbeddingEstimate} on the interval $[t_{n-1},t_{n+1}]$, and then using Lemma \ref{lemma:smallData}, we obtain
				\begin{equation}\label{eq:boundTraceLambda}
					\begin{aligned}
						&\bE\Bigl[
						\mathbf 1_{\{\lambda_{n-1}\geq t_{n+1}\}}
						\sup_{t\in[t_{n-1},t_{n+1}]}
						\|U_{n-1}(t)\|_{X_{1-\frac{1+\kappa}{p},p}}^p
						\Bigr]\\&
						\leq
						c_0\,
						\bE\Bigl[
						\mathbf 1_{\{\lambda_{n-1}\geq t_{n+1}\}}
						\max_{\theta\in\{0,\theta_0\}}
						\|U_{n-1}\|_{H^{\theta,p}((t_{n-1},t_{n+1}),w_\kappa^{t_{n-1}};X_{1-\theta})}^p
						\Bigr]\\&
						\leq
						C_1\,
						\bE\Bigl[
						\mathbf 1_{\{\tau_{n-1}\geq t_{n-1}\}}
						\|U(t_{n-1})\|_{X_{1-\frac{1+\kappa}{p},p}}^p
						\Bigr]\\&
						\leq
						C_1^{\,n-N_0}C_*\varepsilon_0,
					\end{aligned}
				\end{equation}
				where in the last inequality we used the induction hypothesis
				\eqref{eq:inductiveTrace} with $n-1$ in place of $n$.
				Similarly, define
				$$
				V_n:=\mathbf 1_{\{\tau_n\geq t_n\}}\,U(t_n),
				$$
				and let $U_n$ be the $L^p_{\kappa}$-maximal  solution to \eqref{25.06.19.16.54} on $[t_n,\sigma_n)$ with initial condition
				$$
				U_n(t_n)=V_n.
				$$
				Let $\lambda_n\in(t_n,\sigma_n]$ be the stopping time furnished by the time-shifted version of Lemma \ref{lemma:smallData}. Since
				$$
				t_{n+1}-t_n<T_0,
				$$
				Lemma \ref{lemma:smallData} yields
				$$
				\bP(\lambda_n\geq t_{n+1})>1-\varepsilon_0=1-2^{-N_0}\varepsilon.
				$$
				
				We now define
				\begin{equation}\label{eq: definition of tau n+1}
					\tau_{n+1}=A_{n-1}\wedge A_n,    
				\end{equation}
				where
				$$
				A_n
				=
				\mathbf 1_{\{\tau_n\geq t_n\}}\lambda_n
				+
				{\tau}_n\mathbf 1_{\{\tau_n< t_n\}}, \quad n \in\mathbb{N}.
				$$
				Then $\tau_{n+1}$ is a stopping time satisfying $\tau_{n+1}\leq \sigma$, and by \eqref{eq: definition of tau n+1},
				$$
				\{\tau_{n+1}\geq t_{n+1}\}
				=
				\{A_{n-1}\geq t_{n+1}\}\cap \{A_n\geq t_{n+1}\}.
				$$
				Moreover, since $t_{n-1},t_n<t_{n+1}$,
				$$
				\{A_{n-1}\geq t_{n+1}\}
				=
				\{\tau_{n-1}\geq t_{n-1}\}\cap \{\lambda_{n-1}\geq t_{n+1}\},
				$$
				and
				$$
				\{A_n\geq t_{n+1}\}
				=
				\{\tau_n\geq t_n\}\cap \{\lambda_n\geq t_{n+1}\}.
				$$
				Therefore,
				\begin{equation}
					\label{26.04.16.16.01}
					\{\tau_{n+1}\geq t_{n+1}\}
					=
					\{\tau_{n-1}\geq t_{n-1}\}
					\cap
					\{\tau_n\geq t_n\}
					\cap
					\{\lambda_{n-1}\geq t_{n+1}\}
					\cap
					\{\lambda_n\geq t_{n+1}\}.
				\end{equation}
				Next, we claim that on the event $\{\tau_{n+1}\geq t_{n+1}\}$ one has 
				$$\sigma = \sigma_{n-1}$$
				and
				$$ U(t)=U_{n-1}(t), \qquad t\in [t_{n-1},t_{n+1}].$$
				Indeed, by \eqref{26.04.16.16.01}, the event $\{\tau_{n+1}\geq t_{n+1}\}$ is contained in the event $$ \{\tau_{n-1}\geq t_{n-1}\}\cap \{\lambda_{n-1}\geq t_{n+1}\}. $$
				Hence, on $\{\tau_{n+1}\geq t_{n+1}\}$, $$
				V_{n-1}
				=
				\mathbf 1_{\{\tau_{n-1}\geq t_{n-1}\}}U(t_{n-1})
				=
				U(t_{n-1}),
				$$
				and both $U$ and $U_{n-1}$ solve \eqref{25.06.19.16.54} on $[t_{n-1},t_{n+1}]$ with the same initial value at time $t_{n-1}$. Therefore, by the localization property in Theorem \ref{thm:local_wp}, they coincide on their common interval of existence. An analogous argument implies that, on the event $\{\tau_{n+1}\geq t_{n+1}\}$, one has 
				$$\sigma = \sigma_{n}$$
				and
				$$ U(t)=U_{n}(t), \qquad t\in [t_{n},t_{n+1}].$$
				We next verify that $\tau_{n+1}$ satisfies the inductive bound
				\eqref{eq:inductiveProb}. 
				By \eqref{26.04.16.16.01},
				\begin{align*}
					\bP(\tau_{n+1}\geq t_{n+1})
					&\geq
					1-\bP(\tau_n<t_n)-\bP(\tau_{n-1}<t_{n-1})-\bP(\lambda_{n-1}<t_{n+1})
					-\bP(\lambda_n<t_{n+1}) \\
					&>
					1-2^n\varepsilon_0-2^{n-1}\varepsilon_0-\varepsilon_0-\varepsilon_0 \\
					&\geq
					1-2^{n+1}\varepsilon_0
					=
					1-2^{n+1-N_0}\varepsilon,
				\end{align*}
				where in the last inequality we used $n\geq2$.
				
				Moreover, using \eqref{eq:boundTraceLambda}, we obtain
				\begin{align*}
					\bE\Bigl[
					\mathbf 1_{\{\tau_{n+1}\geq t_{n+1}\}}
					\|U(t_{n+1})\|_{X_{1-\frac{1+\kappa}{p},p}}^p
					\Bigr]
					&=
					\bE\Bigl[
					\mathbf 1_{\{\tau_{n+1}\geq t_{n+1}\}}
					\|U_{n-1}(t_{n+1})\|_{X_{1-\frac{1+\kappa}{p},p}}^p
					\Bigr]\\
					&\leq
					\bE\Bigl[
					\mathbf 1_{\{\lambda_{n-1}\geq t_{n+1}\}}
					\sup_{t\in[t_{n-1},t_{n+1}]}
					\|U_{n-1}(t)\|_{X_{1-\frac{1+\kappa}{p},p}}^p
					\Bigr]\\
					&\leq
					C_1^{\,n-N_0}C_*\varepsilon_0
					\leq
					C_1^{\,n+1-N_0}C_*\varepsilon_0,
				\end{align*}
				since $C_1\geq 1$. This shows the bound \eqref{eq:inductiveTrace}.
				
				Finally, we prove the regularity estimate \eqref{eq:inductiveReg}. To estimate the $H^{\theta,p}$-norm on $(0,t_{n+1})$, we localize the time interval by means of a smooth partition of unity. Choose $\chi_1,\chi_2\in C^\infty([0,t_{n+1}])$ such that
				$$
				\chi_1+\chi_2=1
				\qquad\text{on }[0,t_{n+1}],
				$$
				with
				$$
				\operatorname{supp}\chi_1\subset [0,t_n),
				\quad \text{ and } \quad 
				\operatorname{supp}\chi_2\subset \Bigl(\frac{t_{n-1}+t_n}{2},\,t_{n+1}\Bigr].
				$$
				Then
				\begin{equation}\label{26.04.16.16.20}
					U=\chi_1U+\chi_2U
					\qquad\text{on }(0,t_{n+1}).
				\end{equation}
				By \eqref{26.04.16.16.20} and the triangle inequality,
				$$
				\|U\|_{H^{\theta,p}((0,t_{n+1}),w_\kappa;X_{1-\theta})}
				\leq
				\|\chi_1U\|_{H^{\theta,p}((0,t_{n+1}),w_\kappa;X_{1-\theta})}
				+
				\|\chi_2U\|_{H^{\theta,p}((0,t_{n+1}),w_\kappa;X_{1-\theta})}.
				$$
				Since $\chi_1$ is supported in $[0,t_n)$, multiplication by $\chi_1$ only involves the restriction of $U$ to $(0,t_n)$, by the boundedness of multiplication by smooth cutoff functions on $H^{\theta,p}$, it follows that
				$$
				\|\chi_1U\|_{H^{\theta,p}((0,t_{n+1}),w_\kappa;X_{1-\theta})}
				\leq
				C_{\chi_1}\,
				\|U\|_{H^{\theta,p}((0,t_n),w_\kappa;X_{1-\theta})}.
				$$
				Similarly, since $\chi_2$ is supported in $\bigl((t_{n-1}+t_n)/2,t_{n+1}\bigr]$, we obtain
				$$
				\|\chi_2U\|_{H^{\theta,p}((0,t_{n+1}),w_\kappa;X_{1-\theta})}
				\leq
				C_{\chi_2}\,
				\|U\|_{H^{\theta,p}(((t_{n-1}+t_n)/2,t_{n+1}),w_\kappa;X_{1-\theta})}.
				$$
				Combining the above estimates, we arrive at
				$$
				\|U\|_{H^{\theta,p}((0,t_{n+1}),w_\kappa;X_{1-\theta})}^p
				\lesssim
				\|U\|_{H^{\theta,p}((0,t_n),w_\kappa;X_{1-\theta})}^p
				+
				\|U\|_{H^{\theta,p}(((t_{n-1}+t_n)/2,t_{n+1}),w_\kappa;X_{1-\theta})}^p.
				$$
				Since \eqref{26.04.16.16.01} implies
				$$
				\{\tau_{n+1}\geq t_{n+1}\}\subseteq \{\tau_n\geq t_n\},
				$$
				we have
				$$
				\mathbf 1_{\{\tau_{n+1}\geq t_{n+1}\}}
				\|U\|_{H^{\theta,p}((0,t_n),w_\kappa;X_{1-\theta})}^p
				\leq
				\mathbf 1_{\{\tau_n\geq t_n\}}
				\|U\|_{H^{\theta,p}((0,t_n),w_\kappa;X_{1-\theta})}^p.
				$$
				For the second term, we use the inclusion
				$$
				\{\tau_{n+1}\geq t_{n+1}\}\subset \{\lambda_{n-1}\geq t_{n+1}\}.
				$$
				Moreover, on the event $\{\tau_{n+1}\geq t_{n+1}\}$, the localization property in Theorem \ref{thm:local_wp} yields
				$$
				U=U_{n-1}
				\qquad\text{on }[t_{n-1},t_{n+1}].
				$$
				Therefore,
				\begin{align*}
					&\mathbf 1_{\{\tau_{n+1}\geq t_{n+1}\}}
					\|U\|_{H^{\theta,p}(((t_{n-1}+t_n)/2,t_{n+1}),w_\kappa;X_{1-\theta})}^p\\&
					\leq{}
					\mathbf 1_{\{\lambda_{n-1}\geq t_{n+1}\}}
					\|U_{n-1}\|_{H^{\theta,p}(((t_{n-1}+t_n)/2,t_{n+1}),w_\kappa;X_{1-\theta})}^p\\&
					\lesssim
					\mathbf 1_{\{\lambda_{n-1}\geq t_{n+1}\}}
					\|U_{n-1}\|_{H^{\theta,p}((t_{n-1},t_{n+1}),w_\kappa^{t_{n-1}};X_{1-\theta})}^p.
				\end{align*}
				The last inequality follows from the fact that the interval $((t_{n-1}+t_n)/2,t_{n+1})$ is contained in $(t_{n-1},t_{n+1})$, and hence its $H^{\theta,p}$-norm is controlled by the corresponding norm on the larger interval. Taking expectations in the previous estimate and applying the time-shifted version of Lemma \ref{lemma:smallData}, we obtain
				$$
				\bE\Bigl[
				\mathbf 1_{\{\lambda_{n-1}\geq t_{n+1}\}}
				\|U_{n-1}\|_{H^{\theta,p}((t_{n-1},t_{n+1}),w_\kappa^{t_{n-1}};X_{1-\theta})}^p
				\Bigr]
				\leq
				K_\theta \,\bE \|V_{n-1}\|_{X_{1-\frac{1+\kappa}{p},p}}^p.
				$$
				On the other hand, the truncated datum $V_{n-1}$ can be controlled directly by the inductive regularity bound. 
				Indeed, applying the trace embedding on $[0,t_{n-1}]$ together with \eqref{eq:inductiveReg} for $\theta=\theta_0$ and $\theta=0$, we obtain
				$$
				\bE\Bigl[
				\mathbf 1_{\{\tau_{n-1}\geq t_{n-1}\}}
				\|U(t_{n-1})\|_{X_{1-\frac{1+\kappa}{p},p}}^p
				\Bigr]
				\lesssim
				\bE\|U_0\|_{X_{1-\frac{1+\kappa}{p},p}}^p.
				$$
				Since
				$$
				V_{n-1}
				=
				\mathbf 1_{\{\tau_{n-1}\geq t_{n-1}\}}U(t_{n-1}),
				$$
				it follows that
				$$
				\bE \|V_{n-1}\|_{X_{1-\frac{1+\kappa}{p},p}}^p
				\lesssim
				\bE\|U_0\|_{X_{1-\frac{1+\kappa}{p},p}}^p.
				$$
				Consequently,
				$$
				\bE\Bigl[
				\mathbf 1_{\{\lambda_{n-1}\geq t_{n+1}\}}
				\|U_{n-1}\|_{H^{\theta,p}((t_{n-1},t_{n+1}),w_\kappa^{t_{n-1}};X_{1-\theta})}^p
				\Bigr]
				\lesssim_\theta
				\bE\|U_0\|_{X_{1-\frac{1+\kappa}{p},p}}^p.
				$$
				This closes the induction and completes the proof.
				
			}

		\end{proof}

		\vspace{.2cm}
		\noindent
		\textbf{Acknowledgements}
		
		\noindent
		The authors are grateful to Mark Veraar and Antonio Agresti for hosting an Oberwolfach workshop on Stochastic Partial Differential Equations in Critical Spaces between 8th--13th June 2025, where the idea of this project was initiated.

		J.-H.\ Choi was supported by a KIAS Individual Grant (MG102701) at Korea Institute for Advanced Study.
		A. Kumar was supported by the Austrian Science Fund (FWF) \href{https://www.fwf.ac.at/forschungsradar/10.55776/ESP4373225}{10.55776/ESP4373225}.
		A.\ Pitrone was supported by the Clarendon Fund in partnership with a Mary Somerville Graduate Scholarship and a Scatcherd European Scholarship.
		S.\ Popat was supported by the EPSRC Centre for Doctoral Training in Mathematics of Random Systems: Analysis, Modelling and Simulation (EP/S023925/1), and now acknowledges funding from the SDAIM project ANR-22-CE40-0015 funded by the French National Research Agency (ANR).

		\bibliographystyle{alpha}
		\bibliography{references}

@phdthesis{FH22,
  title={Stochastic {PDE}s with cross-diffusion effects},
  author={Huber, Florian},
  year={2022},
  school={Technische Universit{\"a}t Wien}
}

@article{fournier2023particle,
  title={Particle approximation of the doubly parabolic {K}eller-{S}egel equation in the plane},
  author={Fournier, Nicolas and Toma{\v{s}}evi{\'c}, Milica},
  journal={Journal of Functional Analysis},
  volume={285},
  number={7},
  pages={110064},
  year={2023},
  publisher={Elsevier}
}

@article {DH03,
	AUTHOR = {Horstmann, Dirk},
	TITLE = {From 1970 until present: the {K}eller-{S}egel model in
	chemotaxis and its consequences. {I}},
	JOURNAL = {Jahresber. Deutsch. Math.-Verein.},
	FJOURNAL = {Jahresbericht der Deutschen Mathematiker-Vereinigung},
	VOLUME = {105},
	YEAR = {2003},
	NUMBER = {3},
	PAGES = {103--165},
	ISSN = {0012-0456},
	MRCLASS = {35K57 (35B30 35J20 35J60 92C17)},
	MRNUMBER = {2013508},
}

@article {DJZ19,
    AUTHOR = {Dhariwal, Gaurav and J\"ungel, Ansgar and Zamponi, Nicola},
     TITLE = {Global martingale solutions for a stochastic population
              cross-diffusion system},
   JOURNAL = {Stochastic Process. Appl.},
  FJOURNAL = {Stochastic Processes and their Applications},
    VOLUME = {129},
      YEAR = {2019},
    NUMBER = {10},
     PAGES = {3792--3820},
      ISSN = {0304-4149,1879-209X},
   MRCLASS = {60H15 (35K51 35R60 60J10 92D25)},
  MRNUMBER = {3997662},
       DOI = {10.1016/j.spa.2018.11.001},
       URL = {https://doi.org/10.1016/j.spa.2018.11.001},
}

@article {DHJN21,
    AUTHOR = {Dhariwal, Gaurav and Huber, Florian and J\"ungel, Ansgar and
              Kuehn, Christian and Neam\c tu, Alexandra},
     TITLE = {Global martingale solutions for quasilinear {SPDE}s via the
              boundedness-by-entropy method},
   JOURNAL = {Ann. Inst. Henri Poincar\'e{} Probab. Stat.},
  FJOURNAL = {Annales de l'Institut Henri Poincar\'e{} Probabilit\'es et
              Statistiques},
    VOLUME = {57},
      YEAR = {2021},
    NUMBER = {1},
     PAGES = {577--602},
      ISSN = {0246-0203,1778-7017},
   MRCLASS = {60H15 (35R60)},
  MRNUMBER = {4255185},
MRREVIEWER = {Le\ Chen},
       DOI = {10.1214/20-aihp1088},
       URL = {https://doi.org/10.1214/20-aihp1088},
}

@article {BHJ24,
    AUTHOR = {Braukhoff, Marcel and Huber, Florian and J\"ungel, Ansgar},
     TITLE = {Global martingale solutions for stochastic
              {S}higesada-{K}awasaki-{T}eramoto population models},
   JOURNAL = {Stoch. Partial Differ. Equ. Anal. Comput.},
  FJOURNAL = {Stochastics and Partial Differential Equations. Analysis and
              Computations},
    VOLUME = {12},
      YEAR = {2024},
    NUMBER = {1},
     PAGES = {525--575},
      ISSN = {2194-0401,2194-041X},
   MRCLASS = {60H15 (35Q92 35R60 92D25)},
  MRNUMBER = {4709549},
MRREVIEWER = {Jiang\ Lun\ Wu},
       DOI = {10.1007/s40072-023-00289-7},
       URL = {https://doi.org/10.1007/s40072-023-00289-7},
}

@article {AV25,
    AUTHOR = {Agresti, Antonio and Veraar, Mark},
     TITLE = {Nonlinear {SPDE}s and maximal regularity: an extended survey},
   JOURNAL = {NoDEA Nonlinear Differential Equations Appl.},
  FJOURNAL = {NoDEA. Nonlinear Differential Equations and Applications},
    VOLUME = {32},
      YEAR = {2025},
    NUMBER = {6},
     PAGES = {Paper No. 123, 150},
      ISSN = {1021-9722,1420-9004},
   MRCLASS = {60H15 (35K57 35K90 47D06 76M35)},
  MRNUMBER = {4952170},
       DOI = {10.1007/s00030-025-01090-2},
       URL = {https://doi.org/10.1007/s00030-025-01090-2},
}

@article {DPL98,
	AUTHOR = {Da Prato, Giuseppe and Lunardi, Alessandra},
	TITLE = {Maximal regularity for stochastic convolutions in {$L^p$}
	spaces},
	JOURNAL = {Atti Accad. Naz. Lincei Cl. Sci. Fis. Mat. Natur. Rend. Lincei
	(9) Mat. Appl.},
	FJOURNAL = {Atti della Accademia Nazionale dei Lincei. Classe di Scienze
	Fisiche, Matematiche e Naturali. Rendiconti Lincei. Serie IX.
	Matematica e Applicazioni},
	VOLUME = {9},
	YEAR = {1998},
	NUMBER = {1},
	PAGES = {25--29},
	ISSN = {1120-6330,1720-0768},
	MRCLASS = {60H05 (60H15 60H25)},
	MRNUMBER = {1669252},
	MRREVIEWER = {H.-J.\ Engelbert},
}

@article {NVW12a,
	AUTHOR = {van Neerven, Jan and Veraar, Mark and Weis, Lutz},
	TITLE = {Stochastic maximal {$L^p$}-regularity},
	JOURNAL = {Ann. Probab.},
	FJOURNAL = {The Annals of Probability},
	VOLUME = {40},
	YEAR = {2012},
	NUMBER = {2},
	PAGES = {788--812},
	ISSN = {0091-1798,2168-894X},
	MRCLASS = {60H15 (35B65 42B25 47A60 47D06)},
	MRNUMBER = {2952092},
	MRREVIEWER = {Feng-Yu\ Wang},
	DOI = {10.1214/10-AOP626},
	URL = {https://doi.org/10.1214/10-AOP626},
}

@article {NVW12b,
	AUTHOR = {van Neerven, Jan and Veraar, Mark and Weis, Lutz},
	TITLE = {Maximal {$L^p$}-regularity for stochastic evolution equations},
	JOURNAL = {SIAM J. Math. Anal.},
	FJOURNAL = {SIAM Journal on Mathematical Analysis},
	VOLUME = {44},
	YEAR = {2012},
	NUMBER = {3},
	PAGES = {1372--1414},
	ISSN = {0036-1410,1095-7154},
	MRCLASS = {60H15 (35R60 46B09 47D06)},
	MRNUMBER = {2982717},
	MRREVIEWER = {Elisa\ Al\`os},
	DOI = {10.1137/110832525},
	URL = {https://doi.org/10.1137/110832525},
}

@book {K99,
	TITLE = {Stochastic partial differential equations: six perspectives},
	SERIES = {Mathematical Surveys and Monographs},
	VOLUME = {64},
		AUTHOR = {Krylov, Nicolai V.},
	EDITOR = {Carmona, Rene A. and Rozovskii, Boris},
	PUBLISHER = {American Mathematical Society, Providence, RI, 185--242},
	YEAR = {1999},
	PAGES = {185--242},
	ISBN = {0-8218-0806-0},
	MRCLASS = {60-06 (35-06 35R60 60H15)},
	MRNUMBER = {1661761},
	DOI = {10.1090/surv/064},
	URL = {https://doi.org/10.1090/surv/064},
}

@article {K96,
	AUTHOR = {Krylov, Nicolai V.},
	TITLE = {On {$L_p$}-theory of stochastic partial differential equations
	in the whole space},
	JOURNAL = {SIAM J. Math. Anal.},
	FJOURNAL = {SIAM Journal on Mathematical Analysis},
	VOLUME = {27},
	YEAR = {1996},
	NUMBER = {2},
	PAGES = {313--340},
	ISSN = {0036-1410},
	MRCLASS = {60H15 (35R60)},
	MRNUMBER = {1377477},
	MRREVIEWER = {Krystyna\ Twardowska},
	DOI = {10.1137/S0036141094263317},
	URL = {https://doi.org/10.1137/S0036141094263317},
}

@article {SJW19,
	AUTHOR = {Shang, Yadong and Tian, Jianjun Paul and Wang, Bixiang},
	TITLE = {Asymptotic behavior of the stochastic {K}eller-{S}egel
	equations},
	JOURNAL = {Discrete Contin. Dyn. Syst. Ser. B},
	FJOURNAL = {Discrete and Continuous Dynamical Systems. Series B. A Journal
	Bridging Mathematics and Sciences},
	VOLUME = {24},
	YEAR = {2019},
	NUMBER = {3},
	PAGES = {1367--1391},
	ISSN = {1531-3492,1553-524X},
	MRCLASS = {35R60 (35B40 35K40 37L30 60H15)},
	MRNUMBER = {3918161},
	DOI = {10.3934/dcdsb.2019020},
	URL = {https://doi.org/10.3934/dcdsb.2019020},
}

@article {MST22,
	AUTHOR = {Misiats, Oleksandr and Stanzhytskyi, Oleksandr and Topaloglu,
	Ihsan},
	TITLE = {On global existence and blowup of solutions of stochastic
	{K}eller-{S}egel type equation},
	JOURNAL = {NoDEA Nonlinear Differential Equations Appl.},
	FJOURNAL = {NoDEA. Nonlinear Differential Equations and Applications},
	VOLUME = {29},
	YEAR = {2022},
	NUMBER = {1},
	PAGES = {Paper No. 3, 29},
	ISSN = {1021-9722,1420-9004},
	MRCLASS = {35B44 (35K55 60H30 65M75)},
	MRNUMBER = {4344836},
	DOI = {10.1007/s00030-021-00735-2},
	URL = {https://doi.org/10.1007/s00030-021-00735-2},
}

@article {ZL25,
	AUTHOR = {Zhang, Lei and Liu, Bin},
	TITLE = {On the {K}eller-{S}egel models interacting with a
	stochastically forced incompressible viscous flow in
	{$\Bbb{R}^2$}},
	JOURNAL = {J. Differential Equations},
	FJOURNAL = {Journal of Differential Equations},
	VOLUME = {414},
	YEAR = {2025},
	PAGES = {487--554},
	ISSN = {0022-0396,1090-2732},
	MRCLASS = {35Q30 (35Q35 35R60 60H15 76D05 76M35 92C17)},
	MRNUMBER = {4797471},
	MRREVIEWER = {Pawe\l\ Szafraniec},
	DOI = {10.1016/j.jde.2024.09.013},
	URL = {https://doi.org/10.1016/j.jde.2024.09.013},
}

@article {CZZ25,
	AUTHOR = {Chen, Yunfeng and Zhai, Jianliang and Zhang, Tusheng},
	TITLE = {Well-posedness of stochastic chemotaxis system},
	JOURNAL = {J. Differential Equations},
	FJOURNAL = {Journal of Differential Equations},
	VOLUME = {442},
	YEAR = {2025},
	PAGES = {Paper No. 113531, 40},
	ISSN = {0022-0396,1090-2732},
	MRCLASS = {60H15 (35K55 60H30 60H50)},
	MRNUMBER = {4919124},
	MRREVIEWER = {Igor\ V.\ Samoilenko},
	DOI = {10.1016/j.jde.2025.113531},
	URL = {https://doi.org/10.1016/j.jde.2025.113531},
}

@article {HMT22a,
	AUTHOR = {Hausenblas, Erika and Mukherjee, Debopriya and Tran, Thanh},
	TITLE = {The one-dimensional stochastic {K}eller-{S}egel model with
	time-homogeneous spatial {W}iener processes},
	JOURNAL = {J. Differential Equations},
	FJOURNAL = {Journal of Differential Equations},
	VOLUME = {310},
	YEAR = {2022},
	PAGES = {506--554},
	ISSN = {0022-0396,1090-2732},
	MRCLASS = {60H15 (35A01 35B65 35K51 35K87 35Q92 92C17)},
	MRNUMBER = {4355927},
	MRREVIEWER = {Stefano\ Bonaccorsi},
	DOI = {10.1016/j.jde.2021.10.056},
	URL = {https://doi.org/10.1016/j.jde.2021.10.056},
}

@article{HMT22b,
	title={Uniqueness of the stochastic keller-segel model in one dimension},
	author={Hausenblas, Erika and Mukherjee, Debopriya and Tran, Thanh},
	journal={arXiv preprint arXiv:2209.13188},
	year={2022}
}

@article {AS00,
	AUTHOR = {Stevens, Angela},
	TITLE = {The derivation of chemotaxis equations as limit dynamics of
	moderately interacting stochastic many-particle systems},
	JOURNAL = {SIAM J. Appl. Math.},
	FJOURNAL = {SIAM Journal on Applied Mathematics},
	VOLUME = {61},
	YEAR = {2000},
	NUMBER = {1},
	PAGES = {183--212},
	ISSN = {0036-1399,1095-712X},
	MRCLASS = {92C17 (60K99 92D50)},
	MRNUMBER = {1776393},
	MRREVIEWER = {John\ G.\ Milton},
	DOI = {10.1137/S0036139998342065},
	URL = {https://doi.org/10.1137/S0036139998342065},
}

@article{BP04,
	title={P{D}{E} models for chemotactic movements: parabolic, hyperbolic and kinetic},
	author={Perthame, Benoit},
    journal={App. Math.},
	fjournal={Applications of Mathematics},
	volume={49},
	number={6},
	pages={539--564},
	year={2004},
	publisher={Springer}
}

@article {PB18,
	AUTHOR = {Biler, Piotr},
	TITLE = {Mathematical challenges in the theory of chemotaxis},
	JOURNAL = {Ann. Math. Sil.},
	FJOURNAL = {Annales Mathematicae Silesianae},
	VOLUME = {32},
	YEAR = {2018},
	NUMBER = {1},
	PAGES = {43--63},
	ISSN = {0860-2107,2391-4238},
	MRCLASS = {92C17 (35M31 35Q92)},
	MRNUMBER = {3846756},
	DOI = {10.2478/amsil-2018-0004},
	URL = {https://doi.org/10.2478/amsil-2018-0004},
}

@article {HP09,
	AUTHOR = {Hillen, Thomas and Painter, Kevin John},
	TITLE = {A user's guide to {PDE} models for chemotaxis},
	JOURNAL = {J. Math. Biol.},
	FJOURNAL = {Journal of Mathematical Biology},
	VOLUME = {58},
	YEAR = {2009},
	NUMBER = {1-2},
	PAGES = {183--217},
	ISSN = {0303-6812,1432-1416},
	MRCLASS = {92C17 (35K57)},
	MRNUMBER = {2448428},
	DOI = {10.1007/s00285-008-0201-3},
	URL = {https://doi.org/10.1007/s00285-008-0201-3},
}

@article {BB15,
	AUTHOR = {Bellomo, Nicola and Bellouquid, Abdelghani and Tao, Youshan and Winkler, Michael},
	TITLE = {Toward a mathematical theory of {K}eller-{S}egel models of
	pattern formation in biological tissues},
	JOURNAL = {Math. Models Methods Appl. Sci.},
	FJOURNAL = {Mathematical Models and Methods in Applied Sciences},
	VOLUME = {25},
	YEAR = {2015},
	NUMBER = {9},
	PAGES = {1663--1763},
	ISSN = {0218-2025,1793-6314},
	MRCLASS = {35K51 (35B30 35B40 35B44 35K57 35Q35 35Q92 92C17)},
	MRNUMBER = {3351175},
	DOI = {10.1142/S021820251550044X},
	URL = {https://doi.org/10.1142/S021820251550044X},
}

@article{DH04,
    AUTHOR = {Horstmann, Dirk},
     TITLE = {From 1970 until present: the {K}eller-{S}egel model in
              chemotaxis and its consequences. {II}},
   JOURNAL = {Jahresber. Deutsch. Math.-Verein.},
  FJOURNAL = {Jahresbericht der Deutschen Mathematiker-Vereinigung},
    VOLUME = {106},
      YEAR = {2004},
    NUMBER = {2},
     PAGES = {51--69},
      ISSN = {0012-0456},
   MRCLASS = {92C17},
  MRNUMBER = {2073515},
}

@article {CP53,
	AUTHOR = {Patlak, Clifford S.},
	TITLE = {Random walk with persistence and external bias},
	JOURNAL = {Bull. Math. Biophys.},
	FJOURNAL = {The Bulletin of Mathematical Biophysics},
	VOLUME = {15},
	YEAR = {1953},
	PAGES = {311--338},
	ISSN = {0007-4985,1522-9602},
	MRCLASS = {60.0X},
	MRNUMBER = {81586},
	MRREVIEWER = {H.\ P.\ Edmundson},
	DOI = {10.1007/bf02476407},
	URL = {https://doi.org/10.1007/bf02476407},
}

@article {KS70,
	AUTHOR = {Keller, Evelyn Fox and Segel, Lee A.},
	TITLE = {Initiation of slime mold aggregation viewed as an instability},
	JOURNAL = {J. Theoret. Biol.},
	FJOURNAL = {Journal of Theoretical Biology},
	VOLUME = {26},
	YEAR = {1970},
	NUMBER = {3},
	PAGES = {399--415},
	ISSN = {0022-5193,1095-8541},
	MRCLASS = {92C17 (35Q92 82C24 92C45)},
	MRNUMBER = {3925816},
	DOI = {10.1016/0022-5193(70)90092-5},
	URL = {https://doi.org/10.1016/0022-5193(70)90092-5},
}

@article {HJ2021,
	AUTHOR = {Huang, Hui and Qiu, Jinniao},
	TITLE = {The microscopic derivation and well-posedness of the
	stochastic {K}eller-{S}egel equation},
	JOURNAL = {J. Nonlinear Sci.},
	FJOURNAL = {Journal of Nonlinear Science},
	VOLUME = {31},
	YEAR = {2021},
	NUMBER = {1},
	PAGES = {Paper No. 6, 31},
	ISSN = {0938-8974,1432-1467},
	MRCLASS = {65M75 (35K55 60H15 60H30)},
	MRNUMBER = {4192425},
	MRREVIEWER = {Arbaz\ Khan},
	DOI = {10.1007/s00332-020-09661-6},
	URL = {https://doi.org/10.1007/s00332-020-09661-6},
}

@incollection{da1985maximal,
  title={Maximal regularity for stochastic convolutions and applications to stochastic evolution equations in Hilbert spaces},
  author={Da Prato, G},
  booktitle={Stochastic Space—Time Models and Limit Theorems},
  pages={41--52},
  year={1985},
  publisher={Springer}
}

@article{bosch2024multidimensional,
author = {van den Bosch, M. and Hupkes, Hermen Jan},
title = {Multidimensional Stability of Planar Traveling Waves for Stochastically Perturbed Reaction–Diffusion Systems},
journal = {Stud. Appl. Math.},
fjournal = {Studies in Applied Mathematics},
volume = {155},
number = {3},
pages = {e70114},
doi = {https://doi.org/10.1111/sapm.70114},
url = {https://onlinelibrary.wiley.com/doi/abs/10.1111/sapm.70114},
eprint = {https://onlinelibrary.wiley.com/doi/pdf/10.1111/sapm.70114},
year = {2025}
}

@article {Agresti2025StochasticPrimative,
    AUTHOR = {Agresti, Antonio and Hieber, Matthias and Hussein, Amru and
              Saal, Martin},
     TITLE = {The stochastic primitive equations with nonisothermal
              turbulent pressure},
   JOURNAL = {Ann. Appl. Probab.},
  FJOURNAL = {The Annals of Applied Probability},
    VOLUME = {35},
      YEAR = {2025},
    NUMBER = {1},
     PAGES = {635--700},
      ISSN = {1050-5164,2168-8737},
   MRCLASS = {35Q86 (35Q35 35R60 60H15 76M35 76U60)},
  MRNUMBER = {4871718},
MRREVIEWER = {Adrian\ Muntean},
       DOI = {10.1214/24-aap2124},
       URL = {https://doi.org/10.1214/24-aap2124},
}

@article {Agresti2024StochasticNS,
    AUTHOR = {Agresti, Antonio and Veraar, Mark},
     TITLE = {Stochastic {N}avier-{S}tokes equations for turbulent flows in
              critical spaces},
   JOURNAL = {Comm. Math. Phys.},
  FJOURNAL = {Communications in Mathematical Physics},
    VOLUME = {405},
      YEAR = {2024},
    NUMBER = {2},
     PAGES = {Paper No. 43, 57},
      ISSN = {0010-3616,1432-0916},
   MRCLASS = {35Q30 (35D35 35R60 60H15 76F02 76M35)},
  MRNUMBER = {4703457},
MRREVIEWER = {Zijin\ Li},
       DOI = {10.1007/s00220-023-04867-7},
       URL = {https://doi.org/10.1007/s00220-023-04867-7},
}

@article {Mayorcas2023Blowup,
    AUTHOR = {Mayorcas, Avi and Toma\v{s}evi\'{c}, Milica},
     TITLE = {Blow-up for a stochastic model of chemotaxis driven by
              conservative noise on {$\Bbb {R}^2$}},
   JOURNAL = {J. Evol. Equ.},
  FJOURNAL = {Journal of Evolution Equations},
    VOLUME = {23},
      YEAR = {2023},
    NUMBER = {3},
     PAGES = {Paper No. 57, 28},
      ISSN = {1424-3199,1424-3202},
   MRCLASS = {35B44 (35Q92 35R60 60H15 92C17)},
  MRNUMBER = {4629495},
       DOI = {10.1007/s00028-023-00900-3},
       URL = {https://doi.org/10.1007/s00028-023-00900-3},
}

@article {Lorz2012Coupled,
    AUTHOR = {Lorz, Alexander},
     TITLE = {A coupled {K}eller-{S}egel-{S}tokes model: global existence
              for small initial data and blow-up delay},
   JOURNAL = {Commun. Math. Sci.},
  FJOURNAL = {Communications in Mathematical Sciences},
    VOLUME = {10},
      YEAR = {2012},
    NUMBER = {2},
     PAGES = {555--574},
      ISSN = {1539-6746,1945-0796},
   MRCLASS = {35Q92 (35A01 35B44 35M31)},
  MRNUMBER = {2901320},
       DOI = {10.4310/CMS.2012.v10.n2.a7},
       URL = {https://doi.org/10.4310/CMS.2012.v10.n2.a7},
}

@article {Winkler2019Three,
    AUTHOR = {Winkler, Michael},
     TITLE = {A three-dimensional {K}eller-{S}egel-{N}avier-{S}tokes system
              with logistic source: global weak solutions and asymptotic
              stabilization},
   JOURNAL = {J. Funct. Anal.},
  FJOURNAL = {Journal of Functional Analysis},
    VOLUME = {276},
      YEAR = {2019},
    NUMBER = {5},
     PAGES = {1339--1401},
      ISSN = {0022-1236,1096-0783},
   MRCLASS = {35Q35 (35B40 35D30 35K55 92C17)},
  MRNUMBER = {3912779},
MRREVIEWER = {Christian\ Stinner},
       DOI = {10.1016/j.jfa.2018.12.009},
       URL = {https://doi.org/10.1016/j.jfa.2018.12.009},
}

@article {Martini2025Additive,
    AUTHOR = {Martini, Adrian and Mayorcas, Avi},
     TITLE = {An additive-noise approximation to
              {K}eller-{S}egel-{D}ean-{K}awasaki dynamics: local
              well-posedness of paracontrolled solutions},
   JOURNAL = {Stoch. Partial Differ. Equ. Anal. Comput.},
  FJOURNAL = {Stochastics and Partial Differential Equations. Analysis and
              Computations},
    VOLUME = {13},
      YEAR = {2025},
    NUMBER = {2},
     PAGES = {956--1033},
      ISSN = {2194-0401,2194-041X},
   MRCLASS = {60H17 (60L40 92C17)},
  MRNUMBER = {4908981},
       DOI = {10.1007/s40072-024-00343-y},
       URL = {https://doi.org/10.1007/s40072-024-00343-y},
}

@article{AV24,
  title={Stochastic maximal ${L}^p ({L}^q)$-regularity for second order systems with periodic boundary conditions},
  author={Agresti, Antonio and Veraar, Mark},
  JOURNAL = {Ann. Inst. Henri Poincaré Probab. Stat.},
  FJournal={Annales de l'Institut Henri Poincare (B) Probabilites et statistiques},
  volume={60},
  number={1},
  pages={413--430},
  year={2024},
  organization={Institut Henri Poincar{\'e}}
}

@book{hytonen2018analysis,
  title={Analysis in Banach Spaces: Volume II: Probabilistic Methods and Operator Theory},
  author={Hyt{\"o}nen, Tuomas and Van Neerven, Jan and Veraar, Mark and Weis, Lutz},
  volume={67},
  year={2018},
  publisher={Springer}
}

@article {AV_SQEE_pt2,
    AUTHOR = {Agresti, Antonio and Veraar, Mark},
     TITLE = {Nonlinear parabolic stochastic evolution equations in critical
              spaces part {II}: {B}low-up criteria and instataneous
              regularization},
   JOURNAL = {J. Evol. Equ.},
  FJOURNAL = {Journal of Evolution Equations},
    VOLUME = {22},
      YEAR = {2022},
    NUMBER = {2},
     PAGES = {Paper No. 56, 96},
      ISSN = {1424-3199,1424-3202},
   MRCLASS = {35R60 (35K59 60H15)},
  MRNUMBER = {4437443},
MRREVIEWER = {Le\ Chen},
       DOI = {10.1007/s00028-022-00786-7},
       URL = {https://doi.org/10.1007/s00028-022-00786-7},
}

@article {AV_SQEE_pt1,
    AUTHOR = {Agresti, Antonio and Veraar, Mark},
     TITLE = {Nonlinear parabolic stochastic evolution equations in critical
              spaces part {I}. {S}tochastic maximal regularity and local
              existence},
   JOURNAL = {Nonlinearity},
  FJOURNAL = {Nonlinearity},
    VOLUME = {35},
      YEAR = {2022},
    NUMBER = {8},
     PAGES = {4100--4210},
      ISSN = {0951-7715,1361-6544},
   MRCLASS = {60H15 (35K59 35R60 42B37 47D06)},
  MRNUMBER = {4459102},
       DOI = {10.1088/1361-6544/abd613},
       URL = {https://doi.org/10.1088/1361-6544/abd613},
}

@article{ALV,
author = {Agresti, Antonio and Lindemulder, Nick and Veraar, Mark},
title = {On the trace embedding and its applications to evolution equations},
journal = {Mathematische Nachrichten},
volume = {296},
number = {4},
pages = {1319-1350},
doi = {https://doi.org/10.1002/mana.202100192},
year = {2023}
}

@book {schmeisser,
    AUTHOR = {Schmeisser, Hans-J\"urgen and Triebel, Hans},
     TITLE = {Topics in {F}ourier analysis and function spaces},
    VOLUME = {42},
 PUBLISHER = {Akademische Verlagsgesellschaft Geest \& Portig K.-G.,
              Leipzig},
      YEAR = {1987},
     PAGES = {300},
      ISBN = {3-321-00001-6},
   MRCLASS = {42B30 (42-02 46E35 46M35)},
  MRNUMBER = {900143},
MRREVIEWER = {Mario\ Milman},
}

@article{FGL21,
author = {Franco Flandoli and Lucio Galeati and Dejun Luo},
title = {Delayed blow-up by transport noise},
journal = {Communications in Partial Differential Equations},
volume = {46},
number = {9},
pages = {1757--1788},
year = {2021},
publisher = {Taylor \& Francis},
doi = {10.1080/03605302.2021.1893748}}

@article{hornung2019quasilinear,
  title={Quasilinear parabolic stochastic evolution equations via maximal ${L}^p$-regularity},
  author={Hornung, Luca},
   journal={Potential Anal.},
  fjournal={Potential Analysis},
  volume={50},
  number={2},
  pages={279--326},
  year={2019},
  publisher={Springer}
}

@book{da2014stochastic,
  title={Stochastic equations in infinite dimensions},
  author={Da Prato, Giuseppe and Zabczyk, Jerzy},
  year={2014},
  publisher={Cambridge university press}
}

@article{agresti2025stochastic,
  title={A stochastic flow approach to De Giorgi-Nash-Moser estimates for SPDEs with smooth transport noise},
  author={Agresti, Antonio and Sauerbrey, Max and Veraar, Mark},
  journal={arXiv preprint arXiv:2511.12692},
  year={2025}
}

@article{pruss2018critical,
  title={Critical spaces for quasilinear parabolic evolution equations and applications},
  author={Pr{\"u}ss, Jan and Simonett, Gieri and Wilke, Mathias},
  journal= {J. Differential Equations},
  Fjournal={Journal of Differential Equations},
  volume={264},
  number={3},
  pages={2028--2074},
  year={2018},
  publisher={Elsevier}
}

@article{carillo_jungel,
author = {Carrillo, Jos{\'e} Antonio and Hittmeir, Sabine and J\"{u}ngel, Ansgar},
title = {CROSS DIFFUSION AND NONLINEAR DIFFUSION PREVENTING BLOW UP IN THE {K}ELLER–{S}EGEL MODEL},
JOURNAL = {Math. Models Methods Appl. Sci.},
FJournal = {Mathematical Models and Methods in Applied Sciences},
volume = {22},
number = {12},
pages = {1250041},
year = {2012},
doi = {10.1142/S0218202512500418}}

@article{KOZONO20091,
title = {Global strong solution to the semi-linear {K}eller–{S}egel system of parabolic–parabolic type with small data in scale invariant spaces},
journal = {J. of Differential Equations},
FJournal = {Journal of Differential Equations},
volume = {247},
number = {1},
pages = {1--32},
year = {2009},
issn = {0022-0396},
doi = {https://doi.org/10.1016/j.jde.2009.03.027},
url = {https://www.sciencedirect.com/science/article/pii/S0022039609001417},
author = {Hideo Kozono and Yoshie Sugiyama}
}

@article {Arumugam_Tyagi,
    AUTHOR = {Arumugam, Gurusamy and Tyagi, Jagmohan},
     TITLE = {Keller-{S}egel chemotaxis models: a review},
   JOURNAL = {Acta Appl. Math.},
  FJOURNAL = {Acta Applicandae Mathematicae},
    VOLUME = {171},
      YEAR = {2021},
     PAGES = {Paper No. 6, 82},
      ISSN = {0167-8019,1572-9036},
   MRCLASS = {92C17 (35A01 35B44 35D30 35K40 65M06 65M08)},
  MRNUMBER = {4188348},
       DOI = {10.1007/s10440-020-00374-2},
       URL = {https://doi.org/10.1007/s10440-020-00374-2},
}

@article {Bedrossian_He,
    AUTHOR = {Bedrossian, Jacob and He, Siming},
     TITLE = {Suppression of blow-up in {P}atlak-{K}eller-{S}egel via shear
              flows},
   JOURNAL = {SIAM J. Math. Anal.},
  FJOURNAL = {SIAM Journal on Mathematical Analysis},
    VOLUME = {49},
      YEAR = {2017},
    NUMBER = {6},
     PAGES = {4722--4766},
      ISSN = {0036-1410,1095-7154},
   MRCLASS = {35M31 (35B40 35B44 35B45)},
  MRNUMBER = {3730537},
MRREVIEWER = {Jingyu\ Li},
       DOI = {10.1137/16M1093380},
       URL = {https://doi.org/10.1137/16M1093380},
}

@article {Nagai,
    AUTHOR = {Nagai, Toshitaka and Senba, Takasi and Yoshida, Kiyoshi},
     TITLE = {Application of the {T}rudinger-{M}oser inequality to a
              parabolic system of chemotaxis},
   JOURNAL = {Funkcial. Ekvac.},
  FJOURNAL = {Funkcialaj Ekvacioj. Serio Internacia},
    VOLUME = {40},
      YEAR = {1997},
    NUMBER = {3},
     PAGES = {411--433},
      ISSN = {0532-8721},
   MRCLASS = {92D50 (35K55 35Q80 92C99)},
  MRNUMBER = {1610709},
MRREVIEWER = {V.\ N.\ Razzhevaikin},
       DOI = {10.24546/0100499987},
       URL = {https://doi.org/10.24546/0100499987},
}

@article {herrero,
    AUTHOR = {Herrero, Miguel \'Angel and Vel\'azquez, Juan J. L\'opez},
     TITLE = {A blow-up mechanism for a chemotaxis model},
   JOURNAL = {Ann. Scuola Norm. Sup. Pisa Cl. Sci. (4)},
  FJOURNAL = {Annali della Scuola Normale Superiore di Pisa. Classe di
              Scienze. Serie IV},
    VOLUME = {24},
      YEAR = {1997},
    NUMBER = {4},
     PAGES = {633--683},
      ISSN = {0391-173X,2036-2145},
   MRCLASS = {92C15 (35K60 35Q80 92D50)},
  MRNUMBER = {1627338},
MRREVIEWER = {Yoram\ Schiffmann},
       URL = {http://www.numdam.org/item?id=ASNSP_1997_4_24_4_633_0},
}
		
	\end{document}